\newtheorem{thm}{Theorem}[section]
\newtheorem{lem}[thm]{Lemma}
\newtheorem{exe}[thm]{Exercise}
\newtheorem{cor}[thm]{Corollary}
\newtheorem{prp}[thm]{Proposition}
\newtheorem{cnj}[thm]{Conjecture}
\newtheorem{maintheorem}{Theorem}
\theoremstyle{definition}
\newtheorem{dfn}[thm]{Definition}
\newtheorem{qst}[thm]{Question}
\theoremstyle{remark}
\newtheorem{rem}[thm]{Remark}
\newtheorem{exl}{Example}
\newcommand{\R}{\mathbb{R}}
\newcommand{\N}{\mathbb{N}}
\newcommand{\Z}{\mathbb{Z}}
\newcommand{\K}{K}
\newcommand{\Alex}{\mathrm{Alex}}
\newcommand{\To}{\rightarrow}
\newcommand{\Orb}{\mathcal{O}}
\newcommand{\Or}{\mathrm{O}}
\title[Continuous billiard and quasigeodsic flows]{On continuous billiard and quasigeodesic flows \\ characterizing \\  alcoves and isosceles tetrahedra}
\author[C. Lange]{Christian Lange}
\address{Ludwig-Maximilians-Universit\"at M\"unchen, Mathematisches Institut\newline\indent Theresienstra{\ss}e 39, 80333 München, Germany}
\email{lange@math.lmu.de, clange.math@gmail.com}
\begin{document}

\subjclass[2010]{37C83, 57R18, 53C22, 51M20, 52A20}

\maketitle

\begin{abstract}
We characterize fundamental domains of affine reflection groups as those polyhedral convex bodies which support a continuous billiard dynamics. We interpret this characterization in the broader context of Alexandrov geometry and prove an analogous characterization for isosceles tetrahedra in terms of continuous quasigeodesic flows. Moreover, we show an optimal regularity result for convex bodies: the billiard dynamics is continuous if the boundary is of class $\mathcal{C}^{2,1}$. In particular, billiard trajectories converge to geodesics on the boundary in this case. Our proof of the latter continuity statement is based on Alexandrov geometry methods that we discuss resp. establish first.
\end{abstract}
 

\section{Introduction}

Billiards are a widely studied subject in dynamics and in mathematics in general with many interesting results and open questions, see e.g. the surveys \cite{Kat02,Ta05,Gut12}. For instance, it is not known if every obtuse triangle admits a periodic billiard trajectory, see e.g. \cite{Sc09}. One cause of difficulty is that billiard trajectories through corners are not well defined. The billiard dynamics usually exhibits discontinuities. Nevertheless, there is a reasonable, though ambiguous, notion of a billiard trajectory with bounces in non-smooth boundary points that makes sense in any dimension, see Section \ref{sub:billiards}. Such billiards have for instance been studied in \cite{BC89,Gh04,BB09}. This generalized notion is also essential in the context of the relationship between certain symplectic capacities and shortest billiard trajectories \cite{Ru22,AY14}.

Another important topic in mathematics are reflection groups.  After their prominent appearance in Lie theory they pervaded branches like algebra, topology and geometry, see e.g. \cite{MT02,Dol08,Da11}. Reflection groups are tied to billiards via the reflection law. To each polyhedral billiard table one can associate a group generated by the reflections at the table's faces, which encodes interesting properties about the billiard. The case when the linear part of this group is discrete is of special interest in the context of Teichmüller theory \cite{MT02}. Here we show that the group itself is discrete if and only if the billiard dynamics is sufficiently continuous.

\begin{maintheorem}\label{thm:billiard_orbifold} A polyhedral convex body in $\R^n$ admits a continuous billiard evolution if and only if it is an alcove, i.e. the fundamental domain of a discrete affine reflection group. In particular,  irreducible such billiard tables are classified by connected (affine) Coxeter--Dynkin diagrams.
\end{maintheorem}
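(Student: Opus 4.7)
The \emph{if} direction is essentially formal. If $D \subset \R^n$ is an alcove for a discrete affine reflection group $W$, its $W$-translates tile $\R^n$, and a billiard trajectory in $D$ lifts to a straight line in $\R^n$ under the unfolding that crosses into the adjacent reflected copy of $D$ at each bounce. Straight-line flow is continuous, and the quotient map $\R^n \to \R^n/W \cong D$ is an orbifold covering, so the billiard flow on $D$ inherits continuity from geodesic flow on the flat orbifold $\R^n/W$; trajectories through singular strata unfold unambiguously because $W$ is discrete.

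For the converse the plan is to extract Coxeter-type conditions at every face from continuity and then invoke Vinberg's criterion. At a codimension-$2$ face $F$ with dihedral angle $\theta$, consider a one-parameter family of trajectories approaching $F$ transversely in the normal $2$-plane. Unfolding across the two adjacent walls produces a Euclidean cone whose angle depends linearly on $\theta$; continuity of the flow as the approach direction sweeps across $F$ forces this cone to close up flatly after finitely many reflections, which is equivalent to $\theta = \pi/m$ for some integer $m \geq 2$. At a stratum of higher codimension the analogous local problem lives on the link and is, after passing to the tangent cone, a lower-dimensional billiard-continuity problem on a spherical polytope. Inducting on dimension, the stabilizer of every face in the group generated by the adjacent wall reflections is forced to be a finite Coxeter group acting as a standard reflection representation on the tangent cone. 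Once all wall angles are $\pi/m$ and every face link is a finite-Coxeter alcove, Vinberg's theorem produces a discrete reflection group $W \subset \mathrm{Isom}(\R^n)$ with $D$ a strict fundamental domain; since $D$ is bounded, $W$ has compact fundamental domain and hence is an affine reflection group, and Coxeter's classical classification of irreducible such $W$ by connected affine Coxeter--Dynkin diagrams completes the statement.

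The main obstacle is the codimension-reduction step. Isolating the continuity condition at a single higher-codimension face means comparing the unfolded pictures coming from the entire sphere of approach directions and ensuring they agree globally. This is exactly where the Alexandrov-geometry framework flagged in the abstract seems designed to help: identifying the billiard flow near a face with the geodesic flow on the tangent cone reduces the local condition to a spherical instance of the theorem in one lower dimension, enabling a clean induction and bypassing a case-by-case corner analysis.
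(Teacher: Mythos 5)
Your overall architecture matches the paper's: unfold to get the ``if'' direction, derive the $\pi/m$ dihedral-angle condition at codimension-$2$ faces from continuity, and then invoke the classification of reflection groups. But the step you dispose of in one clause --- ``continuity of the flow as the approach direction sweeps across $F$ forces this cone to close up flatly after finitely many reflections, which is equivalent to $\theta=\pi/m$'' --- is the entire content of the converse, and as written it is an assertion rather than an argument. You must exhibit an actual discontinuity when $\pi/(m+1)<\theta<\pi/m$. The paper does this by aiming a trajectory into the corner along the bisector and approximating it by parallel trajectories slightly above and slightly below it: unfolding shows that each approximating family avoids the corner and has a well-defined pointwise limit, that the two limits are mirror images of each other across the bisector, and that they agree only if the exit angle $\beta=(\tfrac{1}{2}+m)\theta-\pi$ equals $\theta/2$, i.e.\ only if $\theta=\pi/m$. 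Without some such explicit pair of approximating families with distinct limits, ``forces the cone to close up'' does not follow: a priori the flow could make a clever polar choice at the corner reconciling all approach directions, and this possibility has to be ruled out.

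Second, the issue you flag as the ``main obstacle'' --- comparing unfoldings at higher-codimension strata and inducting on the links --- is not actually needed. A compact convex polytope all of whose dihedral angles at codimension-$2$ faces are integral submultiples of $\pi$ is automatically an alcove, and its higher-codimension links are then automatically spherical Coxeter cells; this is \cite[Theorem~6.4.3 and Proposition~6.3.9]{Da08} (essentially the Poincar\'e/Vinberg polyhedron theorem for reflection groups), which is exactly what the paper cites after reducing to the planar corner analysis by slicing $K$ with a $2$-plane through a codimension-$2$ face and orthogonal to it. So the induction you sketch can be discarded entirely: the only analytic input is the two-dimensional corner argument, and that is precisely the piece your proposal leaves unproved.
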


Roughly speaking, we say that a convex body admits a continuous billiard evolution if there exists a global choice of billiard trajectories that are defined for all times so that convergence of initial conditions implies pointwise convergence of the trajectories, see Section \ref{sub:billiards} for more details. A convex body in $\R^n$ is an alcove if and only if it is an orbifold, see Proposition \ref{prp:reflection}, i.e. a metric space that is locally isometric to certain model spaces, see Section \ref{sub:orbifold}. In this case the continuous billiard evolution is given by the orbifold geodesic flow which in turn is induced by the geodesic flow of $\R^n$. For more background about orbifold geodesics we refer to e.g. \cite{La18,La19}. An interpretation of Theorem \ref{thm:billiard_orbifold} in the context of Alexandrov geometry will be given further below and in Section \ref{sec:continuous_billiard_quasigeodesic}.

In the class of general convex billiard bodies the situation is much more flexible as the following result illustrates. 

\begin{maintheorem}\label{thm:convex_smooth} Let $K$ be a convex body in $\R^n$ whose boundary is of class $\mathcal{C}^{2,1}$ and has a positive definite second fundamental form. Then $K$ admits a continuous billiard evolution. In particular, billiard trajectories whose initial directions converge to a tangent vector of the boundary converge locally uniformly to the corresponding geodesic of the boundary.
\end{maintheorem}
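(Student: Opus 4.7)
The strategy is to encode billiard trajectories in $K$ as quasigeodesics of the intrinsic metric double $DK := K \sqcup_{\partial K} K$ (two copies of $K$ glued along $\partial K$), and then to appeal to continuity of the quasigeodesic flow on a compact Alexandrov space of curvature bounded below.

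I would first endow $DK$ with an Alexandrov structure. As a convex subset of $\R^n$, $K$ is an Alexandrov space of curvature $\geq 0$; the hypothesis that $\partial K$ is of class $\mathcal{C}^{2,1}$ with positive-definite second fundamental form forces the space of directions at every $p\in\partial K$ to be the round hemisphere $S^{n-1}_+$, so the Alexandrov-theoretic boundary of $K$ coincides with $\partial K$. Perelman's doubling theorem then yields that $DK$ is a closed compact Alexandrov space of curvature $\geq 0$; the spaces of directions at doubled boundary points are round spheres $S^{n-1}$, and in particular $DK$ is a topological manifold. Some care is required here to run doubling at the optimal regularity $\mathcal{C}^{2,1}$.

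Next I would build the dictionary between billiards in $K$ and quasigeodesics in $DK$. The natural projection $\pi\colon DK\to K$ sends minimizing geodesics of $DK$ that cross the doubling locus transversally to piecewise-straight billiard trajectories in $K$, because the straight segment continuing into the other copy of $K$ projects back to the trajectory reflected according to the standard law. More generally, every quasigeodesic of $DK$ projects to a billiard trajectory in the generalized sense of Section~\ref{sub:billiards}. Crucially, a tangent initial condition $v\in T_p\partial K\subset T_p DK$ yields a quasigeodesic that remains on $\partial K\subset DK$ and agrees with the intrinsic Riemannian geodesic of $\partial K$ starting at $p$ in direction $v$.

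With the dictionary in place, continuity of the billiard evolution reduces to the continuity of the quasigeodesic flow on the closed compact Alexandrov space $DK$; projecting via $\pi$ produces the globally defined continuous billiard evolution asserted by the theorem. The tangent-direction statement is then automatic: if initial directions $v_n$ converge to $v\in T_p\partial K$, the corresponding quasigeodesics of $DK$ converge locally uniformly to the unique quasigeodesic with initial datum $(p,v)$, which by the dictionary is the boundary geodesic, and projecting back to $K$ gives the second claim. \textbf{Main obstacle.} The delicate part is establishing the Alexandrov framework itself at the $\mathcal{C}^{2,1}$ threshold and extending the billiards-as-quasigeodesics dictionary to include tangential quasigeodesics, for which the identification with billiard trajectories is not immediate. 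This matches the abstract's announcement that Alexandrov-geometric methods must be discussed or developed within the paper before the continuity statement can be deduced.
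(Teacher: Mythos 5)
You have identified the right objects (quasigeodesics, the double $DK$), but the central step of your argument is a genuine gap: the reduction of the theorem to ``continuity of the quasigeodesic flow on the closed compact Alexandrov space $DK$'' presupposes a general theorem that does not exist. For a compact Alexandrov space of curvature bounded below --- even one that is a topological manifold --- only the \emph{geodesic} flow is known to be defined and continuous on a full-measure subset; there is no general continuity statement for an everywhere-defined quasigeodesic flow, and the existence of such a continuous flow is precisely the restrictive property this paper is studying. Concretely, Proposition \ref{prp_noncontinuous_example} (based on Halpern's example) exhibits a convex body whose boundary is three times differentiable and positively curved everywhere but which admits \emph{no} continuous billiard evolution; its double is a compact nonnegatively curved Alexandrov space (indeed a topological sphere), so your reduction, if valid, would prove a false statement. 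Relatedly, your argument never uses the hypotheses $\mathcal{C}^{2,1}$ or positive-definiteness of the second fundamental form in an essential way (the space of directions at a boundary point is a hemisphere for any smooth convex body), whereas the theorem is known to fail for $\mathcal{C}^{2,\alpha}$ boundaries with $\alpha<1$ --- a sign that the quantitative content is missing.

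What actually has to be proved is that quasigeodesics of $K$ do not branch; continuity then follows from Lemma \ref{lem:unique_implies_continuity}, not from any ambient flow on $DK$. The only possible branch points are directions tangent to $\partial K$, where a quasigeodesic could a priori either follow the boundary or escape into the interior, and where nearby interior trajectories could fail to converge to the boundary geodesic. Ruling this out is where the hypotheses enter: one shows that $\partial K$ has extrinsic curvature bounded below in the base-angle sense (via the Taylor expansion of the exponential map, which needs $\mathcal{C}^{2,1}$ and $\Pi>0$), applies the Alexander--Bishop convexity of $\left(R-\mathrm{dist}_{\partial K}\right)^2$ to obtain the quantitative base-angle and bounce-distance estimates of Lemma \ref{lem:convergence_to_boundary} (so that near-tangent trajectories shadow the boundary and tangent quasigeodesics cannot leave it), and finally proves that a quasigeodesic of $K$ lying in $\partial K$ is an intrinsic quasigeodesic, hence a Riemannian geodesic, of $\partial K$ (Lemma \ref{lem:quasigeodesic_boundary}). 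Your dictionary between billiards on $K$ and quasigeodesics of $DK$ is consistent with Section \ref{sub:further_properties}, but it does not supply any of these estimates, and without them the uniqueness and continuity claims do not follow.
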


In particular, Theorem~\ref{thm:convex_smooth} generalizes results by Halpern in dimension $2$ \cite{Hal77} and by Gruber in all dimensions \cite{Gr90} to $\mathcal{C}^{2,1}$ submanifolds, see Lemma \ref{lem:convergence_to_boundary}. In fact, Theorem~\ref{thm:convex_smooth} is optimal in the sense that its statement fails for $\mathcal{C}^{2,\alpha}$ submanifolds for any $\alpha<1$, see \cite{Hal77}. On the other hand, there are also examples of convex bodies in $\R^2$ which admit a continuous billiard evolution, but whose boundary is only of class $\mathcal{C}^1$, see Example \ref{exe:c1_example}.

Nevertheless, some rigidity remains, at least locally. Namely, in dimension $2$ the tangent cone of each point of a convex body that admits a continuous billiard evolution is an orbifold, see Proposition \ref{prp:local_rigidity_dim2}. We suspect that the same conclusion also holds in higher dimensions.

\begin{cnj}\label{conj:orb_point} If a convex body in $\R^n$ admits a continuous billiard evolution (resp. quasigeodesic flow, see below), then all its tangent cones are orbifolds.
\end{cnj}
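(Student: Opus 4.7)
\textbf{Proof plan for Conjecture \ref{conj:orb_point}.} The natural strategy is to combine a blow-up argument with the global rigidity provided by Theorem \ref{thm:billiard_orbifold}. First, I would show that the tangent cone $T_pK$ at any point $p \in K$ inherits a continuous billiard evolution (resp.\ quasigeodesic flow). Rescaling $K_\lambda := \lambda(K-p)$ with $\lambda \to \infty$ realizes $T_pK$ as a pointed Gromov--Hausdorff limit; since billiard and quasigeodesic trajectories are invariant under rescaling and depend continuously on initial conditions on each $K_\lambda$, a compactness/diagonal extraction should produce a continuous family of trajectories on $T_pK$ satisfying the same convergence property. For the quasigeodesic version this is essentially automatic via Petrunin-type stability of quasigeodesics under pointed GH convergence, a tool already invoked for Theorem \ref{thm:convex_smooth}.

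Second, I would argue that $T_pK$ must be polyhedral. If $\partial K$ is smooth at $p$, then $T_pK$ is a half-space and there is nothing to prove. Otherwise one examines the spherical link $\Sigma_p K \subset S^{n-1}$: applying Proposition \ref{prp:local_rigidity_dim2} to every $2$-plane section through $p$ shows that all planar ''corner angles'' of $\Sigma_pK$ must be of the form $\pi/k$. An induction on dimension, using the conjecture as inductive hypothesis on the iterated links of the lower-dimensional faces of $\Sigma_pK$ (each of which carries an induced continuous flow by the same blow-up argument), should then force $\Sigma_pK$ to be a spherical polytope, and hence $T_pK$ to be a polyhedral convex cone.

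Third, once $T_pK$ is known to be a polyhedral convex cone carrying a continuous billiard (or quasigeodesic) flow, a linear analogue of Theorem \ref{thm:billiard_orbifold} for unbounded cones should identify $T_pK$ with the Weyl chamber of a finite reflection group, that is, with an orbifold in the sense of Section \ref{sub:orbifold}. Such a linear analogue should be essentially a corollary of Theorem \ref{thm:billiard_orbifold}: no translational part of the reflection group needs to be controlled, so one can either adapt the proof directly or blow $T_pK$ up at an interior boundary point distinct from the apex and reduce to the affine case.

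The main obstacle I foresee is the second step. A convex cone all of whose $2$-plane sections through the apex are orbifolds is not obviously polyhedral, since smoothly curved portions of $\partial T_pK$ could in principle coexist with the polyhedral ones and fail to be detected by plane sections through $p$. Overcoming this will likely require the angle-quantization mechanism behind Proposition \ref{prp:local_rigidity_dim2}, coupled with fine control of how continuous trajectories approach strata of codimension at least two in $\partial K$ --- the local shadow of the Coxeter--Dynkin reduction that underlies Theorem \ref{thm:billiard_orbifold}.
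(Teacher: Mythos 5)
The statement you are addressing is stated in the paper as a \emph{conjecture}: the paper itself proves only the two-dimensional case (Proposition \ref{prp:local_rigidity_dim2}) and explicitly leaves the general case open. Your text is, as you yourself signal, a plan rather than a proof, and it does not close the conjecture. The decisive gap is exactly where you locate it, in the polyhedrality of $T_pK$, but the difficulty is worse than you describe. Your proposed detection mechanism --- applying Proposition \ref{prp:local_rigidity_dim2} to every $2$-plane section through $p$ --- is not available: a billiard trajectory of $K\cap H$ is in general not a billiard trajectory of $K$, so $K\cap H$ does not inherit a continuous billiard evolution from $K$. In the paper this restriction is only ever performed for a plane $H$ \emph{orthogonal to a codimension-two face} (proof of Proposition \ref{prp:dim_high}), precisely because reflections in the two adjacent facets then preserve $H$; for a generic plane through a non-polyhedral boundary point no such invariance holds. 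Consequently you have no a priori quantization of ``corner angles'' of $\Sigma_pK$, and nothing in your plan rules out, say, a circular cone, whose planar sections through the apex are sectors of a continuum of angles.

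Your first step also needs more than you give it. Realizing $T_pK$ as a pointed Gromov--Hausdorff limit of the rescalings $\lambda(K-p)$ lets you pass individual quasigeodesics to the limit (this is how the paper uses the blow-up in Proposition \ref{prp:local_rigidity_dim2}, for one carefully chosen symmetric pair of trajectory families), but it does not by itself produce a \emph{continuous} quasigeodesic flow on $T_pK$: a diagonal extraction yields limit trajectories only for a subsequence and a countable set of initial conditions, and continuity of the limit assignment requires an equicontinuity or non-bifurcation input that is essentially the conclusion you are after. Note that in dimension $2$ the paper avoids both problems because every tangent cone is automatically a planar sector, i.e.\ polyhedral, so only the angle needs to be quantized, and that is done with an explicit pair of trajectories rather than with a full limit flow. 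Your third step (a conical analogue of Theorem \ref{thm:billiard_orbifold}) is plausible and would follow the paper's own reduction, but it only becomes relevant once polyhedrality is established. As it stands, the proposal reproduces the known two-dimensional mechanism and correctly identifies, but does not overcome, the obstruction that keeps the statement a conjecture.
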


Theorems \ref{thm:billiard_orbifold} and \ref{thm:convex_smooth} admit the following interpretation in the context of Alexandrov geometry. A convex body is an example of an Alexandrov space with nonnegative curvature, and a (generalized) billiard trajectory corresponds to a so-called quasigeodesic on this Alexandrov space. In this context the question about the existence of a continuous billiard evolution naturally generalizes to the question about the existence of an everywhere defined quasigeodesic flow that satisfies a certain continuity condition, see Section \ref{sub:continuity_alex_sense}. A priori, the geodesic flow of an Alexandrov space with empty boundary is defined almost everywhere for all times and continuous on its domain in many cases \cite{KLP21,BMS22}.

In fact, also our proof of Theorem \ref{thm:convex_smooth} relies on methods from Alexandrov geometry. Namely, we apply a result by Alexander and Bishop about the precise convexity of the distance function to the boundary, see Lemma \ref{lem:convergence_to_boundary}. Moreover, we show and apply the statement that quasigeodesics of a convex body with sufficiently regular boundary that are contained in the boundary are also quasigeodesics of the boundary with respect to its intrinsic metric, see Lemma \ref{lem:quasigeodesic_boundary}.

Another large class of Alexandrov spaces, including those in Theorem \ref{thm:billiard_orbifold}, that admit continuous quasigeodesic flows are quotients of Riemannian manifolds by proper and isometric Lie group actions \cite{LT10}. Like the billiard example, this already indicates that the question of which Alexandrov spaces admit a continuous quasigeodesic flow is complicated in general. Nevertheless, we can say something in the class of polyhedral Alexandrov spaces given by boundaries of convex polyhedral bodies. In this case the same proof as the one of Theorem \ref{thm:billiard_orbifold} shows that a continuous quasigeodesic flow exists if and only if the space is a (flat) orbifold. The following (semi-)rigidity result classifies such spaces. Here a tetrahedron is called \emph{isosceles} if its opposite sides have equal length. Such a tetrahedron is also known as a \emph{disphenoid}.

\begin{maintheorem} \label{thm_3_4_polyhedral} The boundary of a polyhedral convex body in $\R^n$ admits a continuous quasi\-geodesic flow if and only if it is a Riemannian orbifold with respect to its intrinsic metric. The only polyhedral convex bodies that are bounded by Riemannian orbifolds are isosceles tetrahedra in $\R^3$.
\end{maintheorem}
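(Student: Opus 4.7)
The plan is to adapt the proof of Theorem~\ref{thm:billiard_orbifold} to the intrinsic geometry of $\partial P$. If $\partial P$ is a Riemannian orbifold, it must be flat since its intrinsic metric is piecewise flat; the orbifold geodesic flow (cf.~\cite{La18,La19}) is then globally defined and continuous, providing the required continuous quasigeodesic flow. For the converse, I would use the local rigidity argument of Proposition~\ref{prp:local_rigidity_dim2}: continuity at any singular stratum forces the tangent cone of $\partial P$ there to be a Euclidean reflection/rotation quotient, and piecewise flatness then promotes these local charts to a global flat Riemannian orbifold structure.

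\textbf{Reduction to codim-$3$ conditions.} Along each codim-$2$ face of $P$ the two adjacent facets unfold isometrically into $\R^{n-1}$, so $\partial P$ is intrinsically smooth there; intrinsic singularities thus live on codim-$\geq 3$ strata of $P$. At each codim-$3$ face $F$, the normal $2$-dimensional cone in $\partial P$ has total angle equal to the sum of dihedral angles of $P$ at the ridges adjacent to $F$, and the orbifold condition forces this sum to equal $2\pi/k_F$ for some integer $k_F \geq 2$.

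\textbf{Case $n=3$.} The singularities are exactly the vertices, and $\partial P$ is a topological $S^2$ with cone angles $2\pi/k_v$. The orbifold Gauss--Bonnet identity
\[
\sum_v \left(1 - \frac{1}{k_v}\right) = \chi(S^2) = 2,
\]
combined with $1 - 1/k_v \geq 1/2$, bounds the number of vertices above by four; since a convex $3$-polytope has at least four vertices, we get exactly four with $k_v = 2$ everywhere. The resulting face-angle sum of $\pi$ at each vertex is the classical angle-sum characterization of an isosceles tetrahedron (equivalently, the four faces are congruent triangles, or opposite edges are equal).

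\textbf{Main obstacle: ruling out $n \geq 4$.} The orbifold fundamental group $\Gamma$ of $\partial P$ is a cocompact Euclidean crystallographic group in dimension $n-1$. Since $|\partial P| \cong S^{n-1}$ is simply connected for $n \geq 3$, $\Gamma$ is generated by its elliptic elements, and by Bieberbach's theorem $\partial P \cong T^{n-1}/F$ as flat orbifolds with finite point group $F \subset \Or(n-1)$. The requirement that $|T^{n-1}/F| \cong S^{n-1}$ should then be ruled out for $n - 1 \geq 3$: in even dimension by an Euler-characteristic identity $|F|\chi(T^{n-1}/F) = \sum_{g \in F}\chi((T^{n-1})^g)$ that conflicts with $\chi(S^{n-1}) = 2$, and in odd dimension by the appearance of non-manifold orbifold links (such as cones on real projective spaces) that are incompatible with vertex links of a convex polyhedral body. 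I expect this topological rigidity, combined with the codim-$3$ angle conditions, to leave only the isosceles tetrahedra of the $n = 3$ case.
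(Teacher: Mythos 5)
Your characterization of continuity in terms of the orbifold condition and your treatment of the case $n=3$ follow the paper: the reduction of singularities to codimension-$\geq 3$ faces, the Gauss--Bonnet count $\sum_v(1-1/k_v)=2$ with $k_v\geq 2$ forcing exactly four vertices of cone angle $\pi$, and the identification with isosceles tetrahedra all match (the paper also gives an alternative combinatorial proof via Euler's formula). These parts are fine.

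The genuine gap is in ruling out $n\geq 4$. The global topological rigidity you hope for is false: there \emph{do} exist compact flat orbifolds whose underlying space is a sphere of dimension $\geq 3$. The standard example is the Euclidean orbifold with underlying space $S^3$ and singular locus the Borromean rings with cone angle $\pi$ (Thurston); so "$|T^{n-1}/F|\cong S^{n-1}$" cannot be excluded for $n-1=3$, and your odd-dimensional argument via non-manifold links does not apply to such examples, whose links are all spheres. The even-dimensional identity $|F|\chi(X/F)=\sum_{g\in F}\chi(X^g)$ likewise gives no contradiction: since fixed-point sets of isometries of a flat torus are unions of subtori, the right-hand side only counts isolated fixed points, and (as the pillowcase $T^2/\{\pm 1\}\cong S^2$ already shows) the identity is perfectly consistent with the quotient being a sphere. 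What actually kills $n\geq 4$ in the paper is a \emph{local} obstruction tying the orbifold stratification to the polyhedral face structure: by an intrinsic characterization of faces (a point in the interior of an $l$-face is detected by the maximal Euclidean factor its neighborhood in $\partial K$ splits off, using Milka's splitting theorem), the codimension-$3$ faces of $K$ are exactly the codimension-$2$ orbifold stratum. At a point in the interior of a codimension-$4$ face of $K$ at least four codimension-$3$ faces meet, whereas in a $3$-dimensional Riemannian orbifold at most three local components of the codimension-$2$ stratum can meet at a point, by the classification of finite subgroups of $\Or(3)$. Taking a transverse $4$-dimensional slice at such a point yields the contradiction. Without an argument of this kind (or a correct replacement), your proposal does not establish the nonexistence statement for $n\geq 4$.
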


Theorem \ref{thm_3_4_polyhedral} adds to a large number of interesting properties and characterizations of isosceles tetrahedra, see e.g. \cite{AP18,Br26,FF07}. 

The proof of the only if statement of the first part of Theorem \ref{thm_3_4_polyhedral} works like the only if part of Theorem \ref{thm:billiard_orbifold} by induction on the dimension, see Section \ref{sec:polyhedral_billiard_table}. The $3$-dimensional case of the second part can for instance be obtained via the Gauß-Bonnet theorem or with Euler's polyhedral formula, see Section \ref{sub:_prove_C_dim3}. Examples in higher dimensions will be ruled out by a comparison of singular strata with respect to the orbifold structure and the polyhedral structure, see Section \ref{sub:proof_C_dimh}.

Finally, we point out that the everywhere defined continuous quasigeodesic flows in Theorems \ref{thm:billiard_orbifold}, \ref{thm:convex_smooth} and \ref{thm_3_4_polyhedral} are unique, see Section \ref{sub:further_properties}.

\subsection{Structure of the paper} In Section \ref{sec:preliminaries} we collect some preliminaries that are needed later in the paper. Theorem \ref{thm:billiard_orbifold} about continuous billiards on polyhedral convex bodies is then proved in Section \ref{sec:polyhedral_billiard_table}. To read it the subsections of Section \ref{sec:preliminaries} about Alexandrov spaces and quasigeodesics can be skipped. The latter are required in Section \ref{sec:continuous_billiard_quasigeodesic} where we generalize the discussion to arbitrary convex bodies and Alexandrov spaces. Some of these considerations are then applied in Section \ref{sec:general_convex} in the proof of Theorem \ref{thm:convex_smooth}. Finally, in Section \ref{sec:boundary_polyhedral} we prove Theorem \ref{thm_3_4_polyhedral}. While the formulation of Theorem \ref{thm_3_4_polyhedral} relies on the notion of a quasigeodesic, Section \ref{sec:boundary_polyhedral} can be read independently from the sections about Alexandrov spaces and quasigeodesics, either by taking the first part of Theorem \ref{thm_3_4_polyhedral} for granted or by taking the characterization of quasigeodesics in Lemma \ref{cor:quasigeodesic_polyhedral} as a definition.
\newline
\newline
\textbf{Acknowledgements.} This work came into being while the author was visiting ENS de Lyon and Ruhr-Universität Bochum. He thanks the geometry and dynamics groups there for their hospitality. He also would like to thank Alexander Lytchak and Artem Nepechiy for discussions about Alexandrov spaces and quasigeodesics. Moreover, he is grateful to Luca Asselle, Ruth Kellerhals, Florian Lange, Bernhard Leeb, Alexander Lytchak, Anton Petrunin, Daniel Rudolf and Clemens Sämann for useful comments, hints to the literature or answering some question. Finally, he is grateful to the anonymous referee whose remarks helped to improve the exposition and the statement of Theorem B.

\section{Preliminaries}\label{sec:preliminaries}

\subsection{Billiards on polyhedral convex bodies} \label{sub:billiards}
As a warm-up we consider billiards on (polyhedral) convex bodies. At a boundary point with a unique tangent space, a billiard trajectory is reflected according to the usual reflection law, i.e. the angle of incidence equals the angle of reflection. We would like to have a reasonable notion of billiard trajectories that may pass through corners of the boundary of the table. One criterion should be that billiard trajectories are closed under pointwise limits. 

We first introduce the following notions related to a convex body $K\subset \R^n$. The \emph{tangent cone} of $K$ at a point $p$ in $K$ is defined to be
\[
			T_p K = \overline{\left\langle q-p \mid q \in K \right\rangle_{\R} },
\]
i.e. the closure of the $\R$-span of all $q-p$, $q\in \K$. The \emph{normal cone} at $p$ is defined to be
\[
			N_p K = \{ v \in \R^n \mid \left\langle v, u \right\rangle \leq 0 \text{ for all } u \in T_p K \}.
\]
A point in $K$ lies in the interior of $K$ if and only if $N_p K$ consists of a single point. A boundary point for which $N_p K$ is $1$-dimensional is called \emph{smooth}. We call $K$ smooth if all its boundary points are smooth. Two vectors $u,v \in T_p K$ are called \emph{polar} if $-(u+v) \in N_p K$. If $p$ is a smooth boundary point then for any unit vector $v\in T_p K$ there exists a unique polar unit vector $u\in T_p K$ and this correspondence specifies the reflection law at $p$. Let us record the following characterization.

\begin{lem}\label{lem:polar_characterization}
The following conditions are equivalent for two unit vectors $u,v \in T_p K$.
\begin{compactenum}
\item $u$ and $v$ are polar, i.e. $\left\langle u,w\right\rangle + \left\langle v,w\right\rangle\geq 0$ for all $w\in T_p K$.
\item There exists a supporting hyperplane of $K$ at $p$ orthogonal to $u+v$.
\item $\angle (u,w) + \angle (v,w) \leq \pi$ for all $w \in T_p K$, $w \neq 0$.
\end{compactenum}
\end{lem}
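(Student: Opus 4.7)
The plan is to show that (i), (ii), (iii) are three different translations of the single geometric statement that $-(u+v)$ is an outward normal direction to $K$ at $p$.

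First I would observe that (i) is essentially the unfolded definition of polarity: the condition $-(u+v) \in N_pK$, by the very definition of $N_pK$, means $\langle -(u+v), w\rangle \le 0$ for every $w \in T_pK$, which rearranges to the stated inequality in (i).

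For (i) $\Leftrightarrow$ (ii), I would use the standard duality between the normal cone and supporting hyperplanes of a convex body: a nonzero vector $n \in \R^n$ is the outward normal of a supporting hyperplane of $K$ at $p$ if and only if $n \in N_pK$. Since a hyperplane is orthogonal to $u+v$ precisely when its normal is parallel to $u+v$, in the generic case $u+v\ne 0$ condition (ii) amounts to $-(u+v)\in N_pK$ (the opposite sign being ruled out whenever the two vectors are nontrivially polar), which is condition (i) up to positive scaling. The degenerate case $u + v = 0$ (i.e.\ $v = -u$) is harmless: (i) is automatic, and (ii) holds under the natural convention that every hyperplane is orthogonal to the zero vector.

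For (i) $\Leftrightarrow$ (iii) I would argue pointwise in $w$. Since $|u|=|v|=1$, for each $w \in T_pK$ with $w \ne 0$ one has
\[
\langle u,w\rangle + \langle v,w\rangle = |w|\bigl(\cos\angle(u,w) + \cos\angle(v,w)\bigr).
\]
Setting $\alpha=\angle(u,w)$, $\beta=\angle(v,w)$ in $[0,\pi]$ and applying the identity $\cos\alpha+\cos\beta = 2\cos\tfrac{\alpha+\beta}{2}\cos\tfrac{\alpha-\beta}{2}$, the factor $\cos\tfrac{\alpha-\beta}{2}$ is nonnegative because $\tfrac{\alpha-\beta}{2}\in[-\tfrac{\pi}{2},\tfrac{\pi}{2}]$, so the sign of the sum is controlled by $\cos\tfrac{\alpha+\beta}{2}$, i.e.\ by whether $\alpha+\beta\le\pi$. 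The only borderline case $\cos\tfrac{\alpha-\beta}{2}=0$ forces $\{\alpha,\beta\}=\{0,\pi\}$, where both inequalities hold with equality. This gives the pointwise equivalence, and quantifying over $w\in T_pK\setminus\{0\}$ yields (i) $\Leftrightarrow$ (iii).

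I do not expect a serious obstacle: the whole argument amounts to unfolding the definitions of $N_pK$ and of a supporting hyperplane, together with one elementary trigonometric identity. The only care required is bookkeeping around the degenerate cases $u=-v$ and $w$ (anti)parallel to $u$ or $v$, each of which is handled by inspection.
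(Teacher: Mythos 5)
Your proposal is correct, and there is nothing in the paper to compare it against: the lemma is stated there without proof (it is introduced only with ``Let us record the following characterization''), so your elementary verification is exactly the kind of argument being taken for granted. The three steps are all sound: (i) is indeed just the definition of $N_pK$ unfolded; the sign in (ii) is correctly pinned down by testing $-(u+v)\in N_pK$ versus $u+v\in N_pK$ against $w=u\in T_pK$; and the sum-to-product identity handles (iii), with the borderline case $\cos\tfrac{\alpha-\beta}{2}=0$ correctly identified as $\{\alpha,\beta\}=\{0,\pi\}$. The only caveat worth recording is that your convention for the degenerate case $v=-u$ in (ii) tacitly assumes a supporting hyperplane at $p$ exists at all, i.e.\ that $p\in\partial K$; for $p$ in the interior of $K$ conditions (i) and (iii) force $v=-u$ while (ii) is vacuously false. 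This mismatch is already present in the lemma as stated and is irrelevant to its use in the paper, where reflections only occur at boundary points, so it does not constitute a gap in your argument.
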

Here $\angle (u,w)$ denotes the angle between $u$ and $w$. For a path $c:I \To K$ we denote by $c^+(t_0)$ the right derivative of $c$ at $t_0$ and by $c^-(t_0)$ the right derivative of $t \mapsto c(t_0-t)$ at $0$ if they exist. For now we will restrict ourselves to the case of a \emph{polyhedral} convex body $K$  \cite{Al05}. It will be more natural to consider the general case in the context of Alexandrov geometry, see Section \ref{sec:continuous_billiard_quasigeodesic}.

\begin{dfn} \label{dfn:billiard_trajectory} Let $K$ be a polyhedral convex body. A continuous path $c:\R\supset I \To K$ parametrized proportional to arclength is called \emph{billiard trajectory}, if it is locally length minimizing except at a discrete set of times $\mathcal{T}\subset I$ such that for each $t \in \mathcal{T}$ the vectors $c^+(t)$ and $c^-(t)$ are polar.
\end{dfn}

In particular, each constant path is a billiard trajectory, and in the $2$-dimensional case a parametrization of the boundary of $K$ is a billiard trajectory. Moreover, Lemma \ref{lem:polar_characterization}, $(i)$ implies that pointwise limits of billiard trajectories are indeed again billiard trajectories.

We say that a billiard trajectory $c$ \emph{bounces} at time $t$ if $c^+(t) \neq -c^-(t)$. Billiard  trajectories on polytopes are tame in the following sense.

\begin{lem} \label{lem:collision_estimate} On a polyhedral convex body $K$ bounce times do not accumulate.
\end{lem}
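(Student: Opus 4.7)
The plan is to argue by contradiction. Suppose the bounce times accumulate at some $t_\infty \in I$; after reversing time if necessary, one may assume $t_n \nearrow t_\infty$ for some sequence of bounces, and set $p := c(t_\infty)$, which lies in $\partial K$ since $\partial K$ is closed. The first move I would make is to pass to the tangent cone: because $K$ is polyhedral, a neighborhood of $p$ in $K$ is isometric to a neighborhood of the apex $0$ in the polyhedral convex cone $C := T_p K$. For $N$ large enough the entire tail $c|_{[t_N,t_\infty]}$ lies in this neighborhood, and I regard it as a unit-speed billiard trajectory in $C$ with $c(t_\infty)=0$. The crucial feature of this reduction is that every face of $C$ passes through its apex.

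The heart of the argument will be the function $g(t) := \langle c(t), c^+(t)\rangle$ on $[t_N,t_\infty]$. Between bounces $c^+$ is a constant unit vector, so $g'(t)=1$. At a bounce time $t_n$, writing $v_n := c^+(t_n)$ and $c^-(t_n)=-v_{n-1}$, Lemma \ref{lem:polar_characterization} gives
\[
  v_{n-1}-v_n = -\bigl(c^+(t_n)+c^-(t_n)\bigr) \in N_{c(t_n)} C .
\]
Since $0$ and $2c(t_n)$ both belong to the cone $C$, both $c(t_n)$ and $-c(t_n)$ lie in $T_{c(t_n)} C$; pairing them against $v_{n-1}-v_n$ forces $\langle c(t_n), v_{n-1}-v_n\rangle = 0$. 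Hence $g$ is continuous across every bounce, and combined with slope $1$ between bounces this gives $g(t) = g(t_N)+(t-t_N)$ on all of $[t_N,t_\infty]$.

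Once the linearity of $g$ is in hand the contradiction will fall out quickly. Because $|c(t)|\to 0$ and $|c^+(t)|=1$, Cauchy--Schwarz forces $g(t)\to 0$, so $g(t) = t-t_\infty$. Integrating the identity $\frac{d}{dt}\bigl(\frac{1}{2}|c(t)|^2\bigr)=g(t)$ (valid between bounces, hence almost everywhere) together with $c(t_\infty)=0$ yields $|c(t)| = t_\infty - t$. Thus the straight-line distance from $c(t)$ to the apex equals the remaining arclength, and the equality case of the triangle inequality forces $c|_{[t_N,t_\infty]}$ to be a straight segment from $c(t_N)$ to $0$. But such a segment has no bounces, contradicting the choice of the $t_n$.

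The step I expect to be most delicate is the polar cancellation $\langle c(t_n), v_{n-1}-v_n\rangle=0$, which depends essentially on every face of $T_p K$ passing through the apex. This is precisely where the polyhedral hypothesis enters and what makes the tangent-cone reduction in the first paragraph indispensable; once that is secured, the rest of the proof will be a short computation.
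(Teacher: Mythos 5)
Your proof is correct. It shares with the paper's proof the two essential ingredients---localizing to the tangent cone $T_pK$ at the accumulation point and exploiting that every face of this cone passes through its apex---but the mechanism is different: the paper unfolds the trajectory into a straight line by reflecting the table at the supporting hyperplane of each bounce (each such reflection fixes $p$, so the unfolded line must pass through $p$, whence the original trajectory is a radial segment and experiences no bounces), whereas you make the underlying conservation law explicit by showing that the radial velocity $g(t)=\langle c(t),c^+(t)\rangle$ is continuous across bounces and has slope $1$ in between, forcing $|c(t)|=t_\infty-t$ and hence the same radial-segment conclusion via the equality case of the triangle inequality. Your cancellation step $\langle c(t_n),\,v_{n-1}-v_n\rangle=0$ is a nice touch: it follows directly from the polar condition of Lemma~\ref{lem:polar_characterization} together with $\pm c(t_n)\in T_{c(t_n)}C$, so it applies at non-smooth bounce points without selecting a supporting hyperplane, and it turns the paper's rather terse final step (``in this case the billiard trajectory experiences only a single bounce near $p$'') into an explicit computation. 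The price is a somewhat longer argument; the unfolding proof is shorter once one grants that reflections in hyperplanes through the apex preserve the distance to $p$.
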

\begin{proof} 
Suppose the bounce times of a billiard trajectory $c:[0,t_0) \To K$ on a polyhedral convex body $K$ accumulate at time $t_0$ and let $p$ be the limit of $c(t)$ as $t$ tends to $t_0$. We can assume that $K=T_pK$. Instead of following the billiard trajectory, we can follow a straight line and reflect the table at a bounce time at the respective supporting hyperplane, see Lemma \ref{lem:polar_characterization}, $(ii)$. Since these reflections fix the point $p$, the only possibility that $c$ runs into $p$ is that the straight line passes through $p$. However, in this case the billiard trajectory experiences only a single bounce near $p$. This contradiction completes the proof of the lemma.
\end{proof}

Alternatively, the statement can be deduced  from \cite{Sin78} which provides a constant $C$ such that any regular billiard trajectory on a tangent cone of a polyhedral convex body experiences at most $C$ bounces. Moreover, in a similar way one can deduce from \cite[Corollary~1]{BFK98} that there exists a constant $C$ only depending on $K$ such that any unit speed billiard trajectory experiences at most $C(t+1)$ bounces in any time interval of length $t$. In particular, each billiard trajectory can be extended for all times. While the latter is still true in more general situations, see Section \ref{sub:quasigeodesics}, Lemma \ref{lem:collision_estimate} may fail on general convex (even smooth) tables, see Section \ref{sub:optimality} and \cite{Hal77}.  It will be more natural to consider such cases in the context of Alexandrov geometry, see Section \ref{sub:Alex_spaces}, as already pointed out.

The tangent cone bundle $TK$ of $K$ is defined to be the union of all tangent cones $T_pK$ of $K$ and it inherits a subspace topology from $T\R^n$. By a \emph{billiard flow} we mean a dynamical system $\Phi: TK \times \R \To TK$, i.e. a map with $\Phi(\cdot,0)=\mathrm{id}_{TK}$ and $\Phi(\Phi(v,s),t)=\Phi(v,s+t)$ for all $v\in TK$ and all $s,t \in \R$, such that for each $v \in TK$ the map $ \R \ni t \mapsto \pi(\Phi(v,t)) \in K$ is a billiard trajectory with initial conditions $\Phi(v,t)$ at time $t$, where $\pi: TK \To K$ is the natural projection. We say that $K$ admits a \emph{continuous billiard evolution}, if there exists a billiard flow for $K$ such that the composition $\pi\circ \Phi$ is continuous. We also call $K$ continuous, if it admits a continuous billiard evolution. Observe that two convex bodies $K_1\subset \R^n$ and $K_2\subset \R^m$ are continuous billiard tables if and only if $K_1 \times K_2 \subset \R^{m+n}$ is so. Examples of polyhedral continuous billiard tables will be constructed in Section \ref{sec:polyhedral_billiard_table}.

\subsection{Riemannian orbifolds}\label{sub:orbifold} An \emph{$n$-dimensional Riemannian orbifold} is a metric length space $\Orb$ such that each point in $\Orb$ has a neighborhood that is isometric to the quotient of an $n$-dimensional Riemannian manifold $M$ by an isometric action of a finite group $\Gamma$ \cite{La20}. For a point $p$ in a Riemannian orbifold $\Orb$ the isotropy group of a preimage of $p$ in a Riemannian manifold chart is uniquely determined up to conjugation. Its conjugacy class in $\Or(n)$ is called the \emph{local group} of $\Orb$ at $p$ and we also denote it as $\Gamma_p$. The point $p$ is called \emph{regular} if this group is trivial and \emph{singular} otherwise. More precisely, an orbifold admits a stratification into manifolds, where the stratum of codimension $k$ is given by
\[
			\Sigma_k = \{ p\in \Orb \mid \mathrm{codim} \mathrm{Fix}(\Gamma_p)=k \}.
\]
In particular, $\Sigma_0$ is the set of regular points. 

Examples of Riemannian orbifolds arise as quotients of Riemannian manifolds by isometric and proper actions of discrete groups. Riemannian orbifolds that can be obtained in this way are called \emph{good} or \emph{developable}. The quotient map from the manifold to the orbifold is then an instance of a Riemannian orbifold covering, cf. e.g. \cite{La20}. It can be useful to have criteria for an orbifold to be good, cf. \cite{LR21}. For instance, if a complete Riemannian orbifold has constant curvature, meaning that all local manifold charts have constant curvature, then it is good \cite{MM91}. This fact has for instance been applied in \cite{Le15} and it also enters the proof of Theorems \ref{thm:billiard_orbifold} and \ref{thm_3_4_polyhedral}.

\subsection{Affine reflection groups}\label{sub:affine_reflection} An \emph{affine reflection group} is a discrete subgroup of the isometry group of a Euclidean vector space $\R^n$ that is generated by reflections. We also assume that it acts cocompactly. Affine reflection groups have been classified by Coxeter \cite{Co34}. Their classification can be conveniently stated in terms of (affine) \emph{Cartan-Dynkin diagrams}. A fundamental domain of an affine reflection group $\Gamma$  acting on $\R^n$, a so-called \emph{alcove}, is given by
\begin{equation*} 
\begin{split}
\Lambda &= \{ q \in \R^n \mid d(p,q)\leq d(p,gq) \text{ for all } g \in \Gamma \} \\
		&= \bigcap_{g\in \Gamma, \; \mathrm{codim}(\mathrm{Fix}(g))=1} \{ q \in \R^n \mid d(p,q)\leq d(p,gq) \},
\end{split}
\end{equation*}
where $p$ is a point in $\R^n$ that is not fixed by any $g\in \Gamma$ \cite[Theorem~4.9]{Hum90}. In particular, it is a polyhedral convex body and isometric to the quotient metric space $\R^n/\Gamma$, which is an orbifold, cf. \cite{Da11}. The converse also holds.

\begin{prp} \label{prp:reflection}  A polyhedral convex body $K$ in $\R^n$ is an alcove if and only if it is an orbifold.
\end{prp}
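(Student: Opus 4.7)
\medskip

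\noindent\textbf{Proof plan.} The forward direction is already recorded in the paragraph preceding the proposition: if $K$ is an alcove of an affine reflection group $\Gamma$, then $K$ is isometric to the quotient $\R^n/\Gamma$, which is a Riemannian orbifold by the definition given in Section~\ref{sub:orbifold}.

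For the converse, suppose $K \subset \R^n$ is a polyhedral convex body that is a Riemannian orbifold with respect to its intrinsic metric. The first step is to observe that $K$ has constant curvature $0$: the interior of $K$ is an open subset of Euclidean $\R^n$, and the orbifold metric agrees with the Euclidean one there, so every manifold chart is flat. At a boundary point $p$ the orbifold chart identifies a neighborhood of $p$ with a quotient $U/\Gamma_p$ for some finite $\Gamma_p \subset \Or(n)$. Passing to tangent cones, $T_pK \cong \R^n/\Gamma_p$ must agree isometrically with the given polyhedral cone; this forces $\Gamma_p$ to be a finite real reflection group with $T_pK$ as its Weyl chamber. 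In particular the local group at a codimension-$1$ facet $F$ is the $\Z/2$ generated by the reflection across $F$, and the dihedral angle at every codimension-$2$ face is of the form $\pi/m$ for some integer $m \ge 2$.

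Next, since $K$ is compact it is complete as a Riemannian orbifold, so the fact cited in Section~\ref{sub:orbifold} applies: every complete constant-curvature Riemannian orbifold is good \cite{MM91}. Hence $K$ is covered by its simply connected, complete, flat universal orbifold cover, and the developing map identifies the latter isometrically with $\R^n$. The deck group $\Gamma \le \mathrm{Isom}(\R^n)$ acts properly, isometrically, and cocompactly, with $K$ as a fundamental domain.

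It remains to argue that $\Gamma$ is generated by reflections. By the local analysis above, for each facet $F$ of $K$ the local $\Z/2$-symmetry lifts to the reflection $\sigma_F \in \Gamma$ across the hyperplane $H_F$ spanned by $F$. Let $\Gamma' \le \Gamma$ be the subgroup generated by $\{\sigma_F : F \text{ a facet of } K\}$. The angle condition at codimension-$2$ faces guarantees that $\Gamma'$ is a discrete affine reflection group, and unfolding $K$ across its facets (which is consistent at every face by Step~1) exhibits $K$ as a fundamental alcove for $\Gamma'$; but then $\Gamma' = \Gamma$ because a convex fundamental domain of a properly discontinuous isometric action determines the acting group. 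The main obstacle I anticipate is precisely this final step, turning the local reflection data from Step~1 and the global developing picture into the statement that $\Gamma' = \Gamma$; the angle condition $\pi/m$ at all codimension-$2$ faces, together with completeness and convexity of $K$, is what makes the unfolding argument work.
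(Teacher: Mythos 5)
Your proposal is correct in outline and reaches the same two pivot points as the paper, but it routes the hard step differently. The paper's proof is deliberately short: it observes that an orbifold polyhedral body must be \emph{dihedral} (all dihedral angles between codimension-one faces are integral submultiples of $\pi$ — your Step 1, via the classification of finite subgroups of $\Or(2)$ acting on the normal plane to a codimension-two face) and then cites \cite[Theorem~6.4.3 and Proposition~6.3.9]{Da08} for the statement that every dihedral polytope is an alcove. You instead prove that citable fact by hand: completeness plus flatness gives developability via \cite{MM91}, so $K\cong\R^n/\Gamma$ for a discrete $\Gamma\le\mathrm{Isom}(\R^n)$, and one then argues that the facet reflections generate all of $\Gamma$. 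This is precisely the alternative route the paper itself flags in the remark following Proposition \ref{prp:dim_high}. What the paper's citation buys is that it absorbs the genuinely delicate step — your final claim that unfolding $K$ across its facets tiles $\R^n$ consistently and hence $\Gamma'=\Gamma$ — which you correctly identify as the main obstacle but only sketch; that unfolding consistency is essentially the Poincar\'e polyhedron theorem and is the actual content of Davis's Theorem~6.4.3, so as written your argument still leans on an unproved (though standard and citable) result of the same weight. What your route buys is a self-contained picture of \emph{why} the result holds and an explicit identification of the deck group with the reflection group. One small caution in your Step 1: the claim that $T_pK\cong\R^n/\Gamma_p$ forces $\Gamma_p$ to be a reflection group is itself a (true) instance of the same circle of ideas and deserves either a short argument (the codimension-one local groups are $\Z/2$'s generated by reflections, and the cone is a common fundamental domain for $\Gamma_p$ and for the subgroup they generate) or a reference; for the purpose of deducing the dihedral angle condition the codimension-two analysis alone suffices, which is all the paper uses.
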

\begin{proof} A polyhedral convex body $K$ in $\R^n$ which is an orbifold has to be \emph{dihedral} in the sense that all angles between intersecting codimension one faces are integral submultiplies of $\pi$, and any such polytope is an alcove \cite[Theorem~6.4.3 and Proposition~6.3.9]{Da08}
\end{proof}

\subsection{Metric constructions} \label{sub:metric_constructions}
For a metric space $(X,d)$ with $\mathrm{diam}(X)\leq \pi$ a metric $d_c$ on the open cone $CX:=(X\times [0,\infty)) / \sim$, where $\sim$ collapses $X\times \{0\}$ to a point, can be defined as follows, cf. \cite[Def.~3.6.12., Prop.~3.6.13]{MR1835418}. For $q,p \in CX$ with $p=(x,t)$ and $q=(y,s)$ set
\[
			d_c(p,q)=\sqrt{t^2+s^2 - 2ts \cos(d(x,y))}.
\]
The space $(CX,d_c)$ is referred to as the \emph{Euclidean cone} of $(X,d)$. If $X$ is the unit sphere in $\R^n$ with its induced length metric, then $(CX,d_c)$ is naturally isometric to $\R^n$. The assumption $\mathrm{diam}(X)\leq \pi$ of this construction is in particular satisfied for Alexandrov spaces (see Section \ref{sub:Alex_spaces}) with curvature $\geq 1$ \cite[Thm.~10.4.1]{MR1835418}. An isometric action of a group $\Gamma$ on $X$ induces an isometric action of $\Gamma$ on $CX$ in the obvious way and the metric spaces $CX/\Gamma$ and $C(X/\Gamma)$ are isometric.

For a metric space $X$ with a closed subspace $Y$ the natural metric on the double of $X$ along $Y$ is for instance described in \cite[Section~4]{La20} and the references therein.

\subsection{Alexandrov spaces}\label{sub:Alex_spaces} The following two subsections might be skipped on first reading. 

An \emph{Alexandrov space} with curvature bounded below by $\kappa$ is, roughly speaking, a complete, locally compact, geodesic metric space in which triangles are not thinner than their comparison triangles in the model plane of constant curvature $\kappa$, see e.g. \cite{BGP92,MR1835418,AKP19} for more details. The Hausdorff dimension of such a space is always an integer or infinite. In the following we always assume that the dimension is finite. We denote the class of $n$-dimensional Alexandrov spaces with curvature bounded below by $\kappa$ equipped with the $n$-dimensional Hausdorff measure by $\Alex^n(\kappa)$.

Examples of Alexandrov spaces with nonnegative curvature are given by boundaries of convex bodies and by metric doubles of convex bodies along their boundary, cf. \cite[Thm.~10.2.6]{MR1835418}. Another class of examples of Alexandrov spaces is given by quotients of compact Riemannian manifolds by isometric actions of compact Lie groups. In particular, compact Riemannian orbifolds are Alexandrov spaces.

For any pair of points $x$ and $y$ in an Alexandrov space $X$ there exists by definition a geodesic, i.e. a distance realizing path, that connects $x$ with $y$. We denote such a geodesic by $[xy]$, although it is in general not unique. Consider
\[
	\Sigma_x' := \{[xy] | y \in X \backslash \{y\} \}/ \sim.
\]
where the equivalence relation is defined such that $[xy] \sim [xz]$ if and only if $[xy] \subset [xz]$ 
or $[xz] \subset [xy]$. We point out that geodesics in $X$ cannot branch. Measurements of angles defines a metric on $\Sigma_x'$ \cite[§4.3]{MR1835418}. More precisely, the angle between two geodesics $\gamma, \gamma' : [0,\varepsilon) \To X$ starting at $x$ is defined to be
\[
			\limsup_{t,s \downarrow 0} \angle_p (\gamma(s),\gamma'(t)).
\]
Here $\angle_p(q,q')$ denotes the comparison angle $\tilde\angle_{\kappa}(|pq|,|qq'|,|pq'|)$ of a comparison triangle with side lengths $|pq|$, $|qq'|$ and $|pq'|$ in the model plane of constant curvature $\kappa$ opposite to the comparison side of $qq'$. The \emph{space of directions} $\Sigma_x$ of $X$ at $x$ is defined to be the metric completion
 of $\Sigma_x'$. The Euclidean cone over the space of directions is called the \emph{tangent cone} of $X$ at $x$ and is denoted as $T_x X$. Alternatively, the tangent cone $T_xX$ can be obtained as pointed Gromov-Hausdorff limit of rescaled 
versions of $X$ \cite[Theorem 7.8.1]{BGP92}. If  $X \in \Alex^{n}(\kappa)$ for some $\kappa$, then $\Sigma_x \in \Alex^{n-1}(1)$ and $T_x X\in \Alex^{n-1}(0)$.
 
The boundary of an Alexandrov space can be defined inductively via the spaces of directions. A point belongs to the boundary if and only if the boundary of its space of directions is non-empty. The interior is the complement of the boundary. 
The two classes of examples arising from convex bodies mentioned above have empty boundary. 

\subsection{Quasigeodesics}\label{sub:quasigeodesics}

Geodesics in Riemannian manifolds can be characterized among curves parametrized by arclength in terms of a certain concavity condition \cite{QG95}. A curve in an Alexandrov space parametrized by arclength that satisfies this condition does not need to be a geodesic and is called a quasigeodesic. Here we recall a definition of quasigeodesics in terms of so-called developments from \cite{Pe07,QG95}, see also \cite[Section II.8.E]{AKP19}.

Consider a curve $\gamma:[a,b] \To X$ parametrized by arclength in an Alexandrov space $X\in \Alex^n(\kappa)$. We pick a point $p\in X\backslash \gamma$ and assume that $0<|p\gamma(t)|<\pi/ \sqrt{k}$ for all $t \in [a,b]$ if $\kappa>0$. Then, given a reference point $o$, up to rotation there exists a unique curve $\tilde\gamma :[a,b] \To S_{\kappa}$ parametrized by arclength in the model plane $S_{\kappa}$ of constant curvature $\kappa$ such that $|o\tilde \gamma(t)| = |p \gamma(t)|$ for all $t$ and the segment $o\tilde \gamma(t)$ turns clockwise as $t$ increases. The curve $\tilde\gamma$ is called the \emph{development} of $\gamma$.

We call a curve $\gamma:[a,b] \To X$ a (local) \emph{quasigeodesic} if for any $t_0 \in [a,b]$ there exists a neighborhood $U$ of $\gamma(t_0)$ and an $\varepsilon>0$ with $\gamma([t_0-\varepsilon,t_0+\varepsilon])\subset U$ such that the development $\tilde\gamma$ of the restriction $\gamma_{|[t_0-\varepsilon,t_0+\varepsilon]}$  with respect to any point in $U$ is convex in the sense that for every $t\in (t_0-\varepsilon,t_0+\varepsilon)$ and for every $\tau>0$ the region bounded by the segments $o\tilde\gamma(t \pm \tau)$ and the arc $\tilde\gamma_{|[t-\tau,t+\tau]}$ is convex whenever it is defined.

Quasigeodesics have nice properties. For instance, they are unit speed curves and have uniquely defined left and right tangent vectors \cite[Section~5.1]{Pe07}. Here the right tangent vector $\gamma^+(0)$ of a quasigeodesic $\gamma$ is defined to be the limit in $\Sigma_x$ of the directions $[x\gamma(t)]$ for $t\searrow 0$, cf. \cite[Thm.~A.0.1]{Pe07}. Moreover, for any point $x \in X$ and any direction $\xi \in \Sigma_x$ there exists a quasigeodesic with $\gamma(0)=x$, $\gamma^+(0)=\xi$ \cite[Section~5.1]{Pe07}. Left- and right derivatives $\gamma^-(t_0)$ and $\gamma^+(t_0)$ of a quasigeodesic $\gamma$ are polar, i.e. $\angle (\gamma^-(t_0),w)+ \angle ( \gamma^+(t_0),w) \leq \pi$ for any $w\in \Sigma_p$ \cite[Section~2.2]{QG95}. Conversely, if $\gamma_1:(s_0,0] \To X$ and $\gamma_2:[0,t_0) \To X$ are quasigeodesics such that $\gamma_1(0)=\gamma_2(0)$ and such that $\gamma_1^-(0)$ and $\gamma_1^+(0)$ are polar, then also the concatenation of $\gamma_1$ and $\gamma_2$ is a quasigeodesic. Moreover, pointwise limits of quasigeodesics are quasigeodesics. Let us also record the nontrivial statement that quasigeodesics can be extended for all times \cite{QG95,Pe07}.

Finally, we repeat the the following two statements from \cite[Section~2.3]{QG95}.

\begin{lem}\label{lem:unique_geodesic} If a geodesic starts in a given direction in an Alexandrov space $X$, then any quasigeodesic with the same initial direction coincides with it for some positive time.
\end{lem}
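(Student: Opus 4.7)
The plan is to pin down $\gamma_2$ on a short initial interval by tracking its distance from a reference point on $\gamma_1$. Write $p:=\gamma_1(0)=\gamma_2(0)$ and $\xi:=\gamma_1^+(0)=\gamma_2^+(0)$, fix $s>0$ small enough that $\gamma_1$ restricted to $[0,s]$ is minimizing, set $q:=\gamma_1(s)$, and consider $f(t):=|q\gamma_2(t)|$. Since the geodesic $\gamma_1$ realizes $\xi$ as its initial direction and ends at $q$, the first variation formula for distance yields $f^+(0)=-1$. On the other hand, unit speed of $\gamma_2$ and the triangle inequality give the a priori lower bound $f(t)\geq s-t$ on $[0,s]$.

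For the matching upper bound I would consider the development $\tilde\gamma_2$ of $\gamma_2$ on $[0,s]$ with respect to $q$, in the model plane $S_\kappa$ with reference point $o$. From $f(0)=s$, $f^+(0)=-1$ and unit speed of $\tilde\gamma_2$, the initial velocity $\tilde\gamma_2^+(0)$ points directly from $\tilde\gamma_2(0)$ towards $o$. The convexity of the development then forces $\tilde\gamma_2$ to trace exactly the model geodesic from $\tilde\gamma_2(0)$ to $o$ on $[0,s]$: any deviation would bend the arc on the wrong side of that radial segment, making the region bounded by the rays $o\tilde\gamma_2(0)$, $o\tilde\gamma_2(t)$ and the arc $\tilde\gamma_2$ on $[0,t]$ non-convex near $\tilde\gamma_2(0)$. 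Equivalently, this convexity condition is exactly a comparison inequality for $f$ against the reference distance $t\mapsto s-t$ in $S_\kappa$, which matches the initial value and right derivative of $f$; the comparison yields $f(t)\leq s-t$.

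Combining the two bounds, $f(t)=s-t$ on $[0,s]$. Together with $|p\gamma_2(t)|\leq t$, the triangle inequality $s=|pq|\leq|p\gamma_2(t)|+|q\gamma_2(t)|$ must then be an equality, so $\gamma_2(t)$ lies on a geodesic from $p$ to $q$ at arc length $t$. Since geodesics in Alexandrov spaces cannot branch and $\gamma_1$ is a geodesic from $p$ to $q$ with initial direction $\xi=\gamma_2^+(0)$, this forces $\gamma_2(t)=\gamma_1(t)$ for all $t\in[0,s]$.

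The principal obstacle is the upper bound $f(t)\leq s-t$: one must translate the geometric convexity of the development into the quantitative comparison described above. This conversion is essentially the content of Section~2.3 of \cite{QG95} (see also \cite{Pe07}), and once it is in place the remainder of the argument is a routine combination of the first variation formula, the triangle inequality, and non-branching of geodesics.
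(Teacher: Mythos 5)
The paper does not prove this lemma---it is quoted from \cite[Section~2.3]{QG95}---so there is no in-text argument to compare against, and your proof is a correct reconstruction of the standard one from that source: develop $\gamma_2$ at $q=\gamma_1(s)$, combine $f^+(0)=-1$ (first variation inequality for quasigeodesics plus $1$-Lipschitzness of $f$) with convexity of the development to get $f(t)\le s-t$, and conclude via the equality case of the triangle inequality and non-branching. The one step worth making more explicit is the passage from convexity of the development to the bound $f(t)\le s-t$: rather than the informal ``wrong side of the radial segment'' picture, use that convexity of the development with respect to $q$ is equivalent to the barrier-sense differential inequality $\bigl(\mathrm{md}_\kappa\circ f\bigr)''\le 1-\kappa\,\bigl(\mathrm{md}_\kappa\circ f\bigr)$, which together with $f(0)=s$ and $f^+(0)=-1$ integrates against the model solution $t\mapsto \mathrm{md}_\kappa(s-t)$ to give exactly $f(t)\le s-t$; this is the precise form of the ``comparison inequality'' you invoke.
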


\begin{cor}\label{cor:quasigeodesic_Riemannian} A quasigeodesic in a Riemannian manifold $M$ is a geodesic.
\end{cor}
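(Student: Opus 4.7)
The plan is to combine Lemma \ref{lem:unique_geodesic} with the fact that at every point of a Riemannian manifold and in every direction there exists a locally distance-minimizing Riemannian geodesic. A preliminary observation is that in a Riemannian manifold $M$ the space of directions $\Sigma_p$ at any point is the round unit sphere in $T_pM$, so testing the polarity condition $\angle(\gamma^-(t),w)+\angle(\gamma^+(t),w)\le\pi$ with $w=-\gamma^-(t)$ forces $\angle(\gamma^+(t),-\gamma^-(t))=0$, i.e.\
\[
	\gamma^-(t)=-\gamma^+(t) \quad \text{at every } t \text{ in the domain of } \gamma.
\]

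Next, for any $t_0$ let $\sigma$ be the Riemannian geodesic with $\sigma(0)=\gamma(t_0)$ and $\sigma'(0)$ equal to the unit vector $\gamma^+(t_0)\in T_{\gamma(t_0)}M$. For sufficiently small $\varepsilon>0$, $\sigma|_{[0,\varepsilon]}$ is distance-realizing, hence a geodesic of the Alexandrov space $M$, and Lemma \ref{lem:unique_geodesic} yields $\gamma=\sigma$ on some subinterval $[t_0,t_0+\varepsilon']$. Now fix a compact subinterval $[a,b]$ of the domain of $\gamma$ and set
\[
	s := \sup\{\, t\in[a,b] : \gamma|_{[a,t]} \text{ is a Riemannian geodesic}\,\}.
\]
The local statement applied at $t_0=a$ gives $s>a$, and smoothness of the Riemannian geodesic equation shows that $\gamma|_{[a,s]}$ is itself a Riemannian geodesic, with a well-defined forward velocity $v$ at $s$. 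If $s<b$, then $\gamma^-(s)=-v$, and the polarity relation above forces $\gamma^+(s)=v$; applying the local statement at $t_0=s$ shows that $\gamma$ coincides on $[s,s+\varepsilon'']$ with the unique Riemannian geodesic through $\gamma(s)$ with velocity $v$, which by uniqueness is the smooth extension of $\gamma|_{[a,s]}$. Hence $\gamma|_{[a,s+\varepsilon'']}$ is a Riemannian geodesic, contradicting the choice of $s$. Therefore $s=b$, and as $[a,b]$ was arbitrary, $\gamma$ is a Riemannian geodesic on its entire domain.

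The main obstacle is the extension step past the supremum. It rests on the Riemannian simplification $\gamma^-(s)=-\gamma^+(s)$, which guarantees that the right derivative of $\gamma$ at $s$ matches the smooth extension of the geodesic on the left, so that uniqueness of Riemannian geodesics glues the two pieces into one. In a general Alexandrov space the tangent cone need not be a full Euclidean space, polar directions need not be antipodal, and this bootstrap would fail, which is precisely why quasigeodesics can be strictly more general than geodesics in the Alexandrov setting.
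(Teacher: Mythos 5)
Your argument is correct and is essentially the intended derivation: the paper gives no proof of this corollary (both it and Lemma \ref{lem:unique_geodesic} are quoted from Perelman--Petrunin), and deducing it from that lemma together with the polarity of $\gamma^-(t)$ and $\gamma^+(t)$ --- which, since $\Sigma_p$ is a round sphere, forces $\gamma^+(t)=-\gamma^-(t)$ and hence lets the open-closed bootstrap along $[a,b]$ glue the geodesic pieces without a kink --- is exactly the standard route.
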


\section{Polyhedral billiard tables} \label{sec:polyhedral_billiard_table}

Let us first show the if direction of Theorem \ref{thm:billiard_orbifold}. 

\begin{prp} \label{prp:polyhedral_continuous} A polyhedral convex body $K$ in $\R^n$, which is a Riemannian orbifold, admits a continuous billiard evolution.
\end{prp}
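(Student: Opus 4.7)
By Proposition \ref{prp:reflection}, the hypothesis that $K$ is a Riemannian orbifold yields an isometric identification $K\cong \R^n/\Gamma$ with $\Gamma\leq \mathrm{Iso}(\R^n)$ a discrete affine reflection group; let $\tilde\pi:\R^n\to K$ denote the quotient map. My plan is to define $\Phi:TK\times\R\to TK$ as the descent of the Euclidean straight-line flow
\[
\tilde\Phi:T\R^n\times\R\to T\R^n,\quad \tilde\Phi((x,v),t)=(x+tv,v).
\]
Since $\Gamma$ acts by affine isometries, $\tilde\Phi$ is $\Gamma$-equivariant and descends to the quotient $T\R^n/\Gamma$, which I identify canonically with $TK$: at each $p=\tilde\pi(\tilde p)\in K$ the tangent cone $T_pK$ is a fundamental chamber for the linear action of the finite isotropy group $\Gamma_{\tilde p}$ on $T_{\tilde p}\R^n$. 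The flow identity $\Phi(\Phi(v,s),t)=\Phi(v,s+t)$ is inherited from $\tilde\Phi$.

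For continuity of $\pi\circ\Phi$, given a convergent sequence $(v_k,t_k)\to(v,t)$ in $TK\times\R$, the proper discontinuity of the $\Gamma$-action lets me lift to a convergent sequence $(\tilde v_k,t_k)\to(\tilde v,t)$ in $T\R^n\times\R$ after passage to a subsequence. Then $\tilde\Phi(\tilde v_k,t_k)\to\tilde\Phi(\tilde v,t)$, and continuity of $\tilde\pi$ yields convergence of the projected endpoints in $K$. The limit is independent of the choice of lift---any two lifts of $(v,t)$ differ by an element of $\Gamma$ that is absorbed by $\tilde\pi$---so the original full sequence converges.

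Next I verify that for each $v\in TK$ the curve $c(t):=\pi\Phi(v,t)=\tilde\pi(\tilde x+t\tilde v)$ satisfies Definition \ref{dfn:billiard_trajectory}. Continuity and constant speed $|\tilde v|$ are immediate; bounce times form a discrete subset of $\R$ because the line $\tilde x+t\tilde v$ meets the locally finite arrangement of reflection hyperplanes of $\Gamma$ in a discrete set of times (the degenerate case that the line lies inside some reflection hyperplane is handled by restricting the analysis to the induced reflection group on that hyperplane); between bounces $c$ is locally the projection of a straight line under a local isometry and is therefore locally length-minimizing. At a bounce time $t_0$ the picture localizes: around $\tilde q=\tilde x+t_0\tilde v$ the isotropy $\Gamma_{\tilde q}$ acts linearly on $T_{\tilde q}\R^n$ with fundamental chamber $T_{c(t_0)}K=:C$, and the right derivatives $c^+(t_0),c^-(t_0)\in C$ are the unique $\Gamma_{\tilde q}$-orbit representatives in $C$ of $\tilde v$ and $-\tilde v$ respectively.

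The polar condition for $c^\pm(t_0)$ then reduces, via Lemma \ref{lem:polar_characterization}(i), to the inequality $\langle c^+(t_0)+c^-(t_0),w\rangle\geq 0$ for every $w\in C$. Decomposing $T_{\tilde q}\R^n=V^{\mathrm{fix}}\oplus V^\perp$ into the $\Gamma_{\tilde q}$-fixed subspace and its orthogonal complement, the fixed components of $c^+$ and $c^-$ are $\tilde v_{\mathrm{fix}}$ and $-\tilde v_{\mathrm{fix}}$ and cancel, so the check reduces to $V^\perp$, where $\Gamma_{\tilde q}$ acts as an effective finite reflection group whose chamber $C\cap V^\perp$ is simplicial and spanned by its fundamental weights. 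It is classical that the Gram matrix of the fundamental weights of a finite Coxeter system has non-negative entries (being essentially the inverse of the Cartan matrix); hence any two elements of $C\cap V^\perp$ have non-negative inner product, yielding the polarity. Transferring this polar condition across a simultaneous multi-reflection at a higher-codimension stratum, via the acuteness of finite Coxeter chambers, is the main technical point I expect to need care.
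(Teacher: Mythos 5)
Your proposal is correct and follows the same overall strategy as the paper: identify $K$ with $\R^n/\Gamma$ via Proposition \ref{prp:reflection} and push the Euclidean geodesic flow down to the quotient, with the flow property, discreteness of bounce times, and continuity all inherited from upstairs. The one substantive difference lies in how you verify the polar condition at a bounce point $\tilde q$. The paper does this softly: the quotient maps $T^1_{\tilde q}\R^n \To T^1_{\tilde q}\R^n/\Gamma_{\tilde q}$ are $1$-Lipschitz, so the identity $\angle(\tilde v,\tilde w)+\angle(-\tilde v,\tilde w)=\pi$ upstairs descends to the inequality $\angle(c^+(t_0),w)+\angle(c^-(t_0),w)\le\pi$, which is the angle characterization of polarity in Lemma \ref{lem:polar_characterization}. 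You instead compute directly: split off the $\Gamma_{\tilde q}$-fixed subspace (where the incoming and outgoing components cancel), reduce to the essential reflection representation, and invoke the acuteness of the closed chamber of a finite reflection group, i.e.\ the non-negativity of the Gram matrix of the fundamental weights (equivalently of the inverse Cartan matrix). Both routes are valid; the paper's is shorter and purely metric, yours is more explicit and self-contained at the cost of importing a piece of Coxeter combinatorics. Finally, the worry in your last sentence about transferring the polar condition across ``simultaneous multi-reflections at higher-codimension strata'' is already resolved by the argument you wrote: your computation applies verbatim to an arbitrary isotropy group $\Gamma_{\tilde q}$, whose chamber is acute regardless of its rank, so no further transfer step is needed.
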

\begin{proof} 
By Proposition \ref{prp:reflection} we can realize $K$ as the quotient of $\R^n$ by an affine reflection group $\Gamma$. We claim that the geodesic flow on $\R^n$ induces a billiard flow on $\Lambda$. For that consider a geodesic $c$ in $\R^n$. If at some time $t$ it does not intersect the fixed point subspace of an element in $\Gamma$ transversely, then the image of this geodesic in the quotient is locally length minimizing at time $t$. Since $\Gamma$ is discrete, the set of times where this condition is not satisfied is discrete. Since the induced quotient maps $ T^1_p \R^n \To T^1 (\R^n/\Gamma_p)=(T^1 \R^n)/\Gamma_p$ are $1$-Lipschitz, it follows that geodesics in $\R^n$ project to billiard trajectories in $\Lambda$, see Lemma \ref{lem:polar_characterization}, $(ii)$. Moreover, since the geodesic flow on $\R^n$ is a continuous dynamical system, the so induced map is indeed a billiard flow which defines a continuous billiard evolution.
\end{proof}

The billiard evolutions induced by this construction on equilateral triangles and on rectangles in $\R^2$ are illustrated in Figure \ref{fig:2d_continuous_billiards}.

\begin{figure}
	\centering
		\def\svgwidth{0.7\textwidth}
		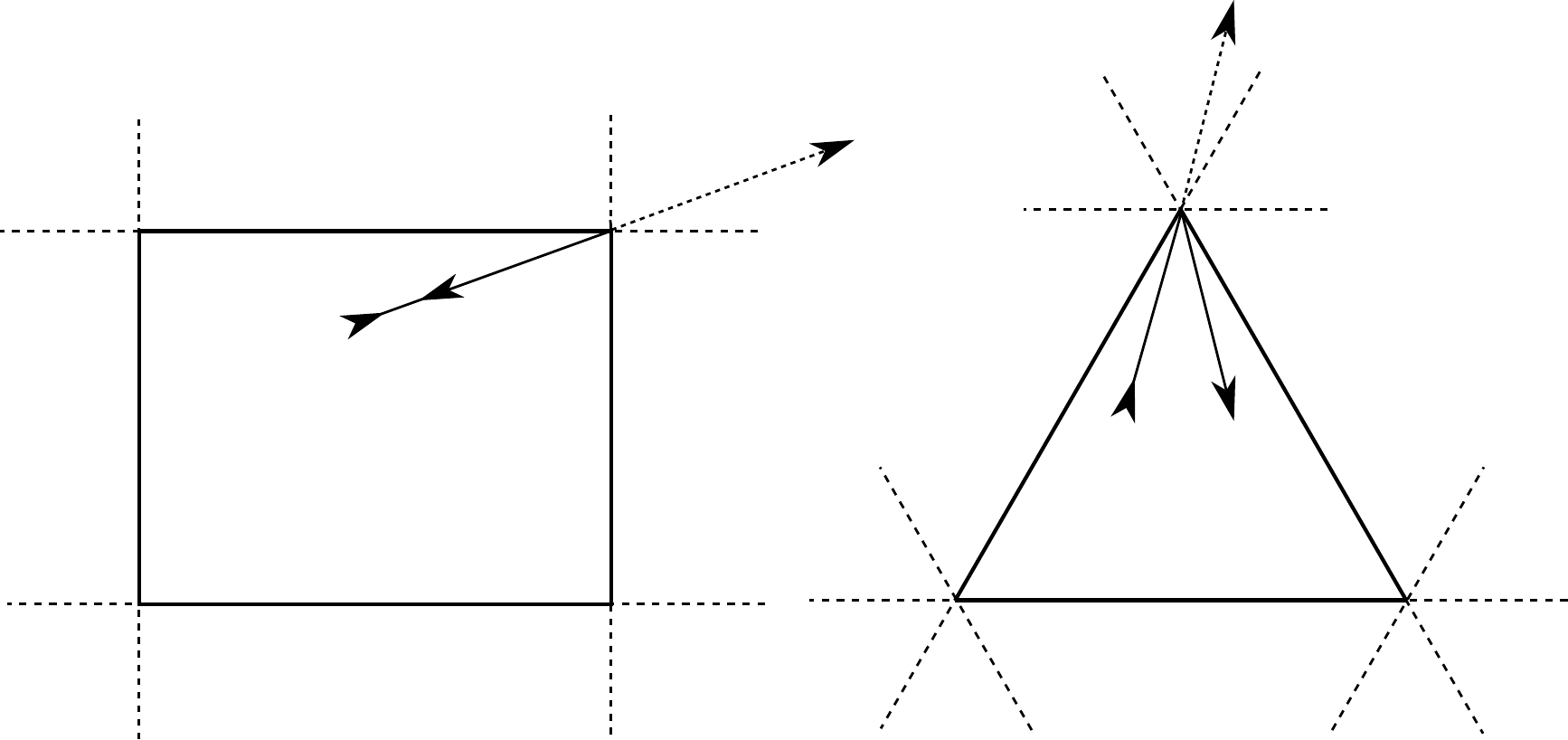
	\caption{Continuous billiard tables in $\R^2$ of type $A_1 \times A_1$ and $A_2$.}
	\label{fig:2d_continuous_billiards}
\end{figure}

Now we will prove the only if direction of Theorem \ref{thm:billiard_orbifold}. We start with a proof in dimension~$2$.

\begin{figure}
	\centering
		\def\svgwidth{0.9\textwidth}
		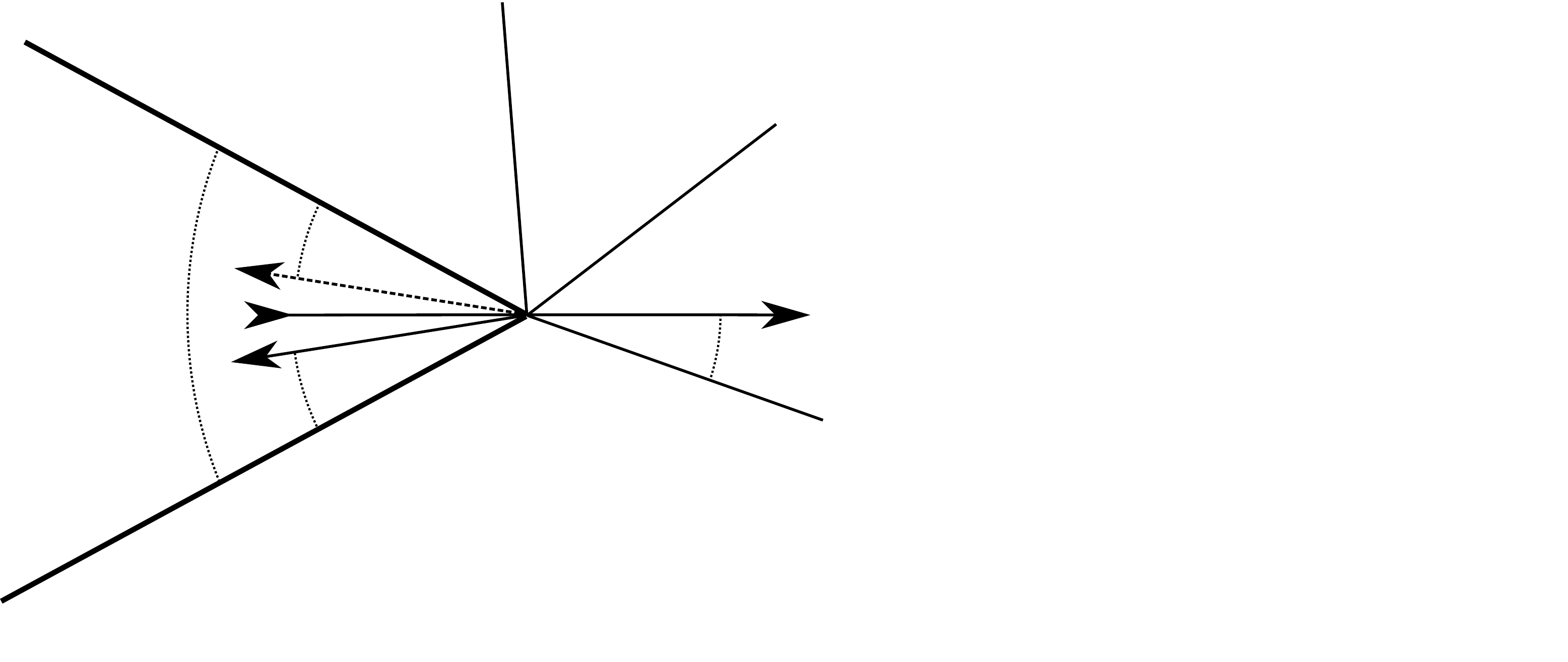
	\caption{Reflection at a corner with opening angle $\frac{\pi}{4}<\frac{2\pi}{7}<\alpha <\frac{\pi}{3}$. The boundary of the table is depicted in solid.	On the unfolded table a billiard trajectory that does not hit the corner corresponds to a straight line.	a)  The two reflections that are obtained as limits of billiard trajectories that approximate the bisector parallelly from above (solid) and from below (dashed). b) Approximation of the bisector from above by a parallel trajectory that does not hit the corner.}
	\label{fig:2d_corner}
\end{figure}

\begin{lem} \label{lem:dim2_billiard} A polyhedral convex body $K$ in $\R^2$ that admits a continuous billiard evolution is an orbifold. The only possible billiard table shapes are rectangles and triangles with interior angles $(\pi/3,\pi/3,\pi/3)$, $(\pi/2,\pi/4,\pi/4)$ and $(\pi/2,\pi/3,\pi/6)$ corresponding to the affine reflection groups of type $A_1\times A_1$, $A_2$, $BC_2$ and $G_2$ (see Figure \ref{fig:2d_continuous_billiards}).
\end{lem}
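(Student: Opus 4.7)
The plan is to pin down the interior angle at every vertex by a local unfolding analysis in the tangent wedge, and then enumerate the allowed polygons using the polygonal angle-sum formula.

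Fix a vertex $p$ of $K$ with interior angle $\alpha\in(0,\pi)$. In a small neighbourhood, $K$ coincides with its tangent wedge $T_pK$, which is a plane wedge of opening angle $\alpha$. Let $c_0$ be the billiard trajectory entering $p$ along the inward angle bisector, and for $h\neq 0$ let $c_h$ be its perpendicular translate at signed distance $h$; for small $|h|$ the trajectories $c_h$ miss $p$ and, by Lemma \ref{lem:collision_estimate}, have only finitely many bounces near $p$. Unfold $T_pK$ into a fan of congruent wedges $W_0,W_1,W_2,\ldots$ glued at $p$, so that $c_h$ becomes a straight line parallel to the bisector at distance $|h|$. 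Since such a line subtends from $p$ an angular interval of length exactly $\pi$, an elementary count shows that it crosses precisely $N(\alpha):=\lfloor 1/2+\pi/\alpha\rfloor$ walls of the fan, independently of $|h|$.

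Folding the terminal wedge back onto $W_0$ is a composition of $N(\alpha)$ reflections; using the identity $\rho_\theta\circ\rho_\phi=R_{2(\theta-\phi)}$ for the composition of two reflections through the origin, one computes that, applied to the constant direction of $c_h$, this composition yields an outgoing direction in $W_0$ whose signed offset from the outward bisector equals $(-1)^{N(\alpha)+1}\bigl(N(\alpha)\alpha-\pi\bigr)$. By the reflection symmetry of the wedge about its bisector, the offset for the from-below approximant $c_{-h}$ is the negative of the offset for the from-above approximant $c_h$. Both $c_{\pm h}$ converge pointwise to $c_0$ as $h\to 0$, so continuity of the billiard evolution forces these two opposite offsets to agree, i.e.\ $N(\alpha)\alpha=\pi$. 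Since $\alpha<\pi$ this gives $\alpha=\pi/n$ for some integer $n\geq 2$.

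Writing the interior angles of $K$ as $\pi/n_i$ with $n_i\geq 2$, the convex-polygon angle-sum $\sum_{i=1}^{V}\alpha_i=(V-2)\pi$ becomes $\sum 1/n_i=V-2$. This is impossible for $V\geq 5$ (the left side is at most $V/2<V-2$); for $V=4$ it forces $n_i=2$ for all $i$, giving arbitrary rectangles; for $V=3$ the only integer solutions with $n_i\geq 2$ are $(n_1,n_2,n_3)\in\{(3,3,3),(2,4,4),(2,3,6)\}$. These four cases are precisely the alcoves of the affine Coxeter groups $A_1\times A_1$, $A_2$, $BC_2$ and $G_2$, so by Proposition \ref{prp:reflection} each such $K$ is a Riemannian orbifold. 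The computational heart of the argument is the composition-of-reflections calculation, where one must track the parity of $N(\alpha)$; some additional care is needed at the (codimension-one) values of $\alpha$ where $N(\alpha)$ jumps, but at those values $\alpha$ is never of the form $\pi/n$, so the conclusion is unaffected.
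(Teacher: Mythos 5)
Your proof is correct and follows essentially the same route as the paper: unfold the tangent wedge at a vertex, compare the limits of the bisector's parallel approximants from above and from below, and use the reflection symmetry of the wedge to force the offset $N(\alpha)\alpha-\pi$ to vanish, giving $\alpha=\pi/n$. The only (harmless) divergence is in the final enumeration, where you derive the four shapes directly from the angle-sum identity $\sum 1/n_i=V-2$ rather than citing the classification of compact flat $2$-orbifolds as the paper does.
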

\begin{proof} Let $K$ be a polyhedral convex body as in the statement of the lemma. It is sufficient to show that $K$ is an orbifold in a neighborhood of each corner. Suppose there is a corner where this is not the case. The opening angle $\alpha$ of this corner then satisfies $\frac{\pi}{n+1} < \alpha < \frac{\pi}{n}$ for some $n\in \N_{\geq 2}$. Figure $\ref{fig:2d_corner}$ illustrates this situation in the case of $n=3$. Consider a billiard trajectory that runs into the corner as a bisector. We approximate this trajectory by parallel trajectories that approach the corner slightly above and below the bisector. The approximation from above is illustrated in Figure $\ref{fig:2d_corner}$, (b). We can understand these billiard trajectories by continuing them as straight lines and reflecting the table at the table's faces instead as shown in Figure $\ref{fig:2d_corner}$. In particular, this shows that these approximating trajectories do not hit the corner and that their continuations are thus uniquely defined.

Let $\beta$ be the angle at the corner between the continuation of the bisector and the first cushion in the development of the table that does not point in the interior of the upper half plane, see Figure $\ref{fig:2d_corner}$, (a). In formulas, $\beta = (\frac 1 2 + m) \alpha - \pi$ where $m$ is the minimal integer such that this expression is nonnegative. Depending on whether $n$ is even or odd, the limit of the billiard trajectories that approximate the bisector from above will then be reflected at the corner and form an angle of $\beta$ with the lower or upper face of the table, respectively, see Figure $\ref{fig:2d_corner}$, (a). 

Because of the $\Z_2$-symmetry with respect to the horizontal bisector, the limits of the two approximating sequences of the bisector are mirror images of each other with respect to the bisector as well. Therefore, the two reflections coincide if and only if the angle $\beta$ satisfies $\beta=\frac \alpha 2$. This implies $\alpha = \frac{\pi}{m}$ in contradiction to our assumption.

The second claim follows from the classification of compact, flat $2$-orbifolds, see e.g. \cite[p.~1024]{MT02} or \cite{Da11}.
\end{proof}

Now we reduce the general case to the $2$-dimensional case.

\begin{prp} \label{prp:dim_high}  A polyhedral convex body $K$ in $\R^n$ that admits a continuous billiard evolution is an orbifold.
\end{prp}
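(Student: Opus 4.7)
The plan is to reduce to the two-dimensional result in Lemma~\ref{lem:dim2_billiard} by taking tangent cones along codimension-$2$ edges. By Proposition~\ref{prp:reflection} it suffices to show that every dihedral angle $\alpha$ between a pair of adjacent codimension-$1$ faces of $K$ is of the form $\pi/m$ for some integer $m\geq 2$. So fix such a pair meeting along a codimension-$2$ face $F$, pick $p$ in the relative interior of $F$, and decompose the tangent cone isometrically as $T_pK = L \oplus W_\alpha$, where $L$ is the linear span of $F-p$ (of dimension $n-2$) and $W_\alpha \subset L^\perp$ is a planar wedge with apex $0$ and opening angle $\alpha$.

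Next I would transport the continuous billiard structure from $K$ to a neighborhood of the apex of the slice $\{0\}\oplus W_\alpha$. The rescaled tables $\lambda_n(K-p)$ converge to $T_pK$ on compact sets as $\lambda_n\to\infty$, and the rescaled billiard flows are again continuous billiard flows. Since pointwise limits of billiard trajectories are again billiard trajectories by Lemma~\ref{lem:polar_characterization}, and bounce times on each polyhedral table do not accumulate (Lemma~\ref{lem:collision_estimate}), a diagonal Arzel\`a--Ascoli-type argument extracts a continuous billiard flow on $T_pK$. The two codimension-$1$ faces of $T_pK$ through $L$ have normals lying in $L^\perp$, so the reflection law preserves $L$-components of velocities; hence trajectories with initial conditions in $\{0\}\oplus W_\alpha$ stay in this slice, producing a continuous billiard evolution on $W_\alpha$ near its apex.

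With a continuous billiard evolution on $W_\alpha$ near its apex in hand, I would apply the corner argument from the proof of Lemma~\ref{lem:dim2_billiard} verbatim: if $\alpha$ were not an integral submultiple of $\pi$, one would have $\frac{\pi}{n+1} < \alpha < \frac{\pi}{n}$ for some $n\geq 2$, and approximating a bisecting trajectory that runs into the corner from above and from below would produce two limit reflections that, by the $\Z_2$-symmetry across the bisector, are mirror images of one another; their coincidence forces $\alpha=\pi/m$, a contradiction. Thus every codimension-$2$ dihedral angle of $K$ has the required form, so $K$ is dihedral, and Proposition~\ref{prp:reflection} identifies it as an alcove, hence as an orbifold. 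The main obstacle is the blow-up step: one must verify that the rescaled continuous billiard flows on $\lambda_n(K-p)$ subsequentially admit a continuous limiting flow on $T_pK$ which genuinely respects the product decomposition and does not pathologically make inconsistent choices of continuation at the apex, where different billiard extensions are a priori possible.
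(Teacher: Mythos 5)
Your overall strategy --- reduce to the dihedral angles along codimension-$2$ faces, pass to a two-dimensional slice, and conclude via the corner argument of Lemma \ref{lem:dim2_billiard} together with Proposition \ref{prp:reflection} --- is exactly the route the paper takes. The one place where you diverge is the blow-up, and that is where the gap sits. As you yourself note, extracting a \emph{continuous} billiard flow on $T_pK$ from the rescaled flows on $\lambda_n(K-p)$ is not a routine Arzel\`a--Ascoli argument: subsequential pointwise limits of individual trajectories are indeed billiard trajectories, but a pointwise limit of continuous evolutions need not be continuous, and nothing forces the limiting choices of continuation at the apex (where several billiard extensions coexist a priori) to assemble into a flow satisfying the continuity requirement. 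As written, this step is asserted rather than proved.

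The good news is that the blow-up is unnecessary. Since $K$ is polyhedral and $p$ lies in the relative interior of the codimension-$2$ face $\sigma$, a neighborhood of $p$ in $K$ is already isometric to a neighborhood of the apex of $T_pK=L\oplus W_\alpha$; no rescaling or limit is needed. Near $p$ the only walls a trajectory can meet are the two codimension-$1$ faces containing $\sigma$, whose normals lie in $H=L^{\perp}$, so the reflection law preserves the $H$-component of the velocity and the \emph{given} continuous evolution restricts to a continuous billiard evolution on the planar wedge $K\cap(p+H)$ near $p$. This is precisely what the paper does: it intersects $K$ with the $2$-plane through $p$ orthogonal to $\sigma$ and applies Lemma \ref{lem:dim2_billiard} to the restricted flow. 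With that replacement your argument closes, and the final step --- every dihedral angle is an integral submultiple of $\pi$, hence $K$ is dihedral and therefore an alcove, hence an orbifold by Proposition \ref{prp:reflection} --- coincides with the paper's.
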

\begin{proof} Let $K$ in $\R^n$, $n\geq 3$, be as in the statement of the proposition and let $p \in K$ be a point that lies in a face $\sigma$ of codimension two. We look at the intersection of $K$ with a $2$-dimensional plane $H$ through $p$ orthogonal to $\sigma$. An application of Lemma \ref{lem:dim2_billiard} to the flow restricted to $K \cap H$ in a neighborhood of $p$ shows that the dihedral angle at $\sigma$ is an integral submultiple of $\pi$. As in the proof of Proposition \ref{prp:reflection} we deduce that $K$ is an orbifold by referring to \cite[Theorem~6.4.3 and Proposition~6.3.9]{Da08}
\end{proof}

Proposition \ref{prp:dim_high} completes the proof of Theorem \ref{thm:billiard_orbifold}. We remark that the statement in \cite[Theorem~6.4.3]{Da08} can be proved by induction on the dimension based on the fact that a complete Riemannian orbifold of constant sectional curvature is developable \cite{MM91}, cf. \cite[Proposition~2.16.]{Le15}.

\section{Continuous quasigeodesic flows} \label{sec:continuous_billiard_quasigeodesic}

\subsection{From billiard to quasigeodesic flows}

By a unit speed \emph{billiard trajectory} on a convex body $K$ we simply mean a quasigeodesic on $K$. This definition is justified by the following lemma.

\begin{lem}On a polyhedral convex body $K$ a curve parametrized by arclength is a billiard trajectory in the sense of Definition \ref{dfn:billiard_trajectory} if and only if it is a quasigeodesic.
\end{lem}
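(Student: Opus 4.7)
The plan is to match the two definitions term by term using the polarity and concatenation/continuation machinery for quasigeodesics recalled in Section~\ref{sub:quasigeodesics}. The first observation is that polarity in Definition~\ref{dfn:billiard_trajectory} coincides with the Alexandrov polarity used for quasigeodesics: for $p\in K$ the cone $T_pK\subset\R^n$ carries the ambient angle metric on its unit sphere $\Sigma_pK$, and Lemma~\ref{lem:polar_characterization}(iii) says precisely that $u,v\in\Sigma_pK$ are polar in the billiard sense exactly when $\angle(u,w)+\angle(v,w)\leq\pi$ for every $w\in\Sigma_pK$. Given this, the ``billiard implies quasigeodesic'' direction is immediate: if $c$ is a billiard trajectory with discrete bounce set $\mathcal{T}$, then between two consecutive elements of $\mathcal{T}$ the curve is locally length-minimizing in the convex set $K$, hence a straight-line segment and therefore a quasigeodesic; at each $t\in\mathcal{T}$, polarity of $c^{+}(t)$ and $c^{-}(t)$ together with the concatenation principle of Section~\ref{sub:quasigeodesics} shows that $c$ remains a quasigeodesic through $t$. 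Iterating over the discrete set $\mathcal{T}$ yields that $c$ is a quasigeodesic on all of $I$.

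For the converse, let $c$ be a quasigeodesic and put $\mathcal{T}=\{t : c^+(t)\neq -c^-(t)\}$. Polarity of $c^{+}(t)$ and $c^{-}(t)$ for $t\in\mathcal{T}$ is the general property of the left and right tangents of a quasigeodesic recalled in Section~\ref{sub:quasigeodesics}, translated via the identification above. At any $t\notin\mathcal{T}$ the common tangent $v=c^+(t)=-c^-(t)$ determines a straight-line segment through $c(t)$ in direction $\pm v$ that lies in $K$ and is therefore a geodesic of $K$; by Lemma~\ref{lem:unique_geodesic} the quasigeodesic $c$ must agree with this segment on a one-sided neighborhood of $t$, and applying the argument on both sides shows that $c$ is locally length-minimizing at $t$. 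Consequently, between any two consecutive points of $\mathcal{T}$, the curve $c$ is a straight-line segment.

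It remains to show that $\mathcal{T}$ is discrete, which is the main obstacle. Suppose $t_0$ were an accumulation point of $\mathcal{T}$ and set $p=c(t_0)$. The piecewise-straight structure established above puts us in exactly the setting of the proof of Lemma~\ref{lem:collision_estimate}: after passing to the tangent cone $T_pK$, which is a local model for $K$ near $p$ and polyhedral, each bounce occurs at a supporting hyperplane of $T_pK$ through the apex (Lemma~\ref{lem:polar_characterization}(ii)), and unfolding the piecewise-straight curve through these reflections produces a straight line in $\R^n$. Since all the reflecting hyperplanes pass through the apex, the unfolded straight line must itself pass through it, which allows at most a single bounce near $t_0$ and contradicts the accumulation. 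Hence $\mathcal{T}$ is discrete and $c$ satisfies every condition of Definition~\ref{dfn:billiard_trajectory}.
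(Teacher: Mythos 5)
Your proof is correct and follows essentially the same route as the paper's: the polarity/concatenation criterion for quasigeodesics, the fact that quasigeodesics in the Riemannian interior of $K$ are straight lines, and the unfolding argument of Lemma~\ref{lem:collision_estimate} for non-accumulation of break times. You spell out details the paper leaves implicit (the identification of the two polarity notions via Lemma~\ref{lem:polar_characterization}, and the use of Lemma~\ref{lem:unique_geodesic} at non-break times), and your discreteness step leans on the same piecewise-straight structure near the putative accumulation point that the paper's citation of Lemma~\ref{lem:collision_estimate} implicitly assumes.
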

\begin{proof} Recall from Section \ref{sub:quasigeodesics} that the concatenation of two curves parametrized by arclength is a quasigeodesic if and only of both curves are quasigeodesics and the initial direction of the second curve is polar to the initial direction of the reversed first curve. Now the claim follows from the fact that quasigeodesics in a Riemannian manifold, in particular in the interior of $K$, are geodesics \cite[Corollary~2.3]{QG95}, and that bounce times of billiard trajectories and quasigeodesics do not accumulate by Lemma \ref{lem:collision_estimate}.
\end{proof}

More generally, a compact Alexandrov space is called \emph{polyhedral} if it admits a triangulation such that each simplex is globally isometric to a simplex in Euclidean space, cf. \cite{LP15,Le15}. In particular, (boundaries and doubles of) polyhedral convex bodies are examples of polyhedral Alexandrov spaces.

\begin{lem}\label{cor:quasigeodesic_polyhedral}  A continuous curve $c$ in a polyhedral Alexandrov space is a quasigeodesic if and only if it is locally distance realizing except at a discrete number of times $\mathcal{T}$ such that for each $t \in \mathcal{T}$ the vectors $c^-(t)$ and $c^+(t)$ are polar. 
\end{lem}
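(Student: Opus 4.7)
The $(\Leftarrow)$ direction is direct. On each connected component of $I\setminus\mathcal{T}$ the curve $c$ is locally distance-realizing and hence a local geodesic of $X$; such a local geodesic is trivially a quasigeodesic on that subinterval. At every $t\in\mathcal{T}$ the polarity of $c^-(t)$ and $c^+(t)$ is precisely the criterion recalled in Section~\ref{sub:quasigeodesics} under which the concatenation of two quasigeodesics is again a quasigeodesic. Applying this concatenation criterion at each point of the discrete set $\mathcal{T}$ shows that $c$ is a quasigeodesic on all of $I$.

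For the $(\Rightarrow)$ direction, let $c$ be a quasigeodesic and set
\[
\mathcal{T}:=\{t\in I \mid c \text{ is not locally distance-realizing at } t\}.
\]
The existence of the one-sided tangents $c^\pm(t)$ and their polarity at every $t$ is part of the general theory recalled in Section~\ref{sub:quasigeodesics}, so the polarity condition at $\mathcal{T}$ is automatic. Everything therefore reduces to proving that $\mathcal{T}$ is discrete. Note that on a neighborhood of any point in the relative interior of a top-dimensional simplex the space $X$ is a flat Riemannian manifold, so by Corollary~\ref{cor:quasigeodesic_Riemannian} the quasigeodesic is a local geodesic there; hence $\mathcal{T}$ is contained in the set of times $t$ at which $c(t)$ belongs to some simplex of positive codimension.

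To establish the discreteness of $\mathcal{T}$ I would mimic the argument of Lemma~\ref{lem:collision_estimate}. Assume for contradiction that $\mathcal{T}\ni t_k\to t_0$ and set $p:=c(t_0)$. Since the triangulation is locally finite, after passing to a subsequence we may assume that all points $c(t_k)$ lie in a fixed closed subcomplex through $p$. Rescaling $X$ around $p$ by factors tending to infinity yields the tangent cone $T_pX$, which is itself a polyhedral Alexandrov cone (its simplicial structure is cone-like at $p$), and the rescaled quasigeodesics converge to a quasigeodesic in $T_pX$ that would bounce in every neighborhood of the apex. One then unfolds this limit curve across the codimension-one faces of the cone along which it bounces: because the defining property of a polyhedral Alexandrov space is that neighboring top-dimensional simplices glue isometrically across codimension-one faces, this unfolding produces a local isometric development of the bouncing curve onto a straight line in a flat Euclidean model. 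All the unfolding reflections fix the apex, so a straight line approaching the apex arbitrarily often must pass through it, contradicting the uniqueness of the right tangent vector $c^+(t_0)$.

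The step I expect to be the main obstacle is this blow-up-and-unfold argument, which is less transparent in the abstract polyhedral Alexandrov setting than it is for polyhedral convex bodies. In particular one has to ensure that the pointed Gromov--Hausdorff blow-up to $T_pX$ preserves both the quasigeodesic property of the approximating sequences and the polyhedral combinatorics, and that the isometric development along the sequence of bounces fits into a single Euclidean chart near the apex rather than just piecewise. Once that is done, the contradiction follows precisely as in Lemma~\ref{lem:collision_estimate}, and the characterization stated in the lemma drops out.
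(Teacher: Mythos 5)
Your argument matches the paper's: the forward direction reduces to Corollary~\ref{cor:quasigeodesic_Riemannian} applied on the interiors of top-dimensional simplices together with the non-accumulation argument of Lemma~\ref{lem:collision_estimate}, and the converse is the concatenation criterion for quasigeodesics at polar directions recalled in Section~\ref{sub:quasigeodesics}. The paper's proof is in fact terser than yours---it simply asserts that discreteness ``follows as in the proof of Lemma~\ref{lem:collision_estimate}'' without spelling out the blow-up and unfolding step that you rightly flag as the only delicate point.
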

\begin{proof} If the image under a quasigeodesic $c$ of a small neighborhood around some time $t$ is contained in a simplex, then $c$ is locally length realizing on this interval by the fact that quasigeodesics in Riemannian manifolds are geodesics \cite[Corollary~2.3]{QG95}. The fact that times for which this is not the case do not accumulate follows as in the proof of Lemma \ref{lem:collision_estimate}.
\end{proof}

Given the notion of a billiard trajectory on a convex body, we can define the notions of a billiard flow and of a continuous billiard evolution precisely as in Section \ref{sub:billiards}.

For general Alexandrov spaces we make the analogous definition: By a \emph{quasigeodesic flow} we mean a dynamical system $\Phi: TX \times \R \To TX$, i.e. a map with $\Phi(\cdot,0)=\mathrm{id}_{TX}$ and $\Phi(\Phi(v,s),t)=\Phi(v,s+t)$ for all $v\in TX$ and all $s,t \in \R$, such that for each $v \in TX$ the map $\R \ni t \mapsto \pi(\Phi(v,t)) \in X$ is constant or a constant speed curve with initial conditions $\Phi(v,t)$ at time $t$ which becomes a quasigeodesic with respect to unit-speed parametrization. Here $\pi: TX \To X$ is the natural projection.

\subsection{Continuity in the Alexandrov sense}\label{sub:continuity_alex_sense} In order to be able to talk about continuous quasigeodesic flows we introduce the following convergence notion suggested by Lytchak. We say that the initial directions of a sequence of quasigeodesics $\gamma_i :[0,a)\rightarrow X$, which are uniquely defined unit vectors in $T_{\gamma_i(0)}X$ as discussed in Section \ref{sub:quasigeodesics}, \emph{converge (in the Alexandrov sense)} to the initial direction of a quasigeodesic $\gamma: [0,a)\rightarrow X$ if $\gamma_i(0)\rightarrow \gamma(0)$ and the initial direction of the limit of any convergent subsequence of the $\gamma_i$ (which is again a quasigeodesic) coincides with the initial direction of $\gamma$. In particular, this condition is satisfied if the sequence of quasigeodesics $\gamma_i$ converges pointwise to the quasigeodesic $\gamma$.

In order to compare this convergence notion with others, we prove the following lemma.

\begin{lem}\label{lem:convergence_initial_directions} Let $K$ be a convex subset of a Riemannian manifold. Suppose a sequence of quasigeodesics $\gamma_i:[0,a) \To K$ converges to a quasigeodesic $\gamma:[0,a) \To K$ and that $\gamma^+(0)$ is contained in an $\R$-factor of $T_{\gamma(0)} K$. Then the initial directions $\gamma_i^+(0)$ converge to the initial direction $\gamma^+(0)$ with respect to the subspace topology of $TK$.
\end{lem}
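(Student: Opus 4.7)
\emph{Plan.} The plan is to argue by contradiction. I would assume that $\gamma_i^+(0)$ does not converge to $\gamma^+(0)$ in the subspace topology of $TK \subset TM$, and by compactness of the unit sphere bundle (using $\gamma_i(0) \to \gamma(0)$) pass to a subsequence along which $\gamma_i^+(0) \to v$ in $TM$ for some unit vector $v \in T_{\gamma(0)}K$ with $v \neq \gamma^+(0)$. The aim is to show that this forces the pointwise limit of $\gamma_i$ to differ from $\gamma$.

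The first structural input is the $\R$-factor hypothesis. Because $\gamma^+(0)$ lies in an $\R$-factor of $T_{\gamma(0)}K$, both $\pm\gamma^+(0)$ belong to $T_{\gamma(0)}K$ and $K$ contains an ambient Riemannian geodesic through $\gamma(0)$ tangent to $\gamma^+(0)$. I would invoke Lemma~\ref{lem:unique_geodesic} to conclude that $\gamma$ coincides with this ambient geodesic on some interval $[0,\varepsilon)$, so that in the exponential chart at $\gamma(0)$ one has $\gamma(s) = \exp_{\gamma(0)}(s\,\gamma^+(0))$ for $s \in [0,\varepsilon)$; in particular $\gamma$ is smooth at $0$ with ambient right derivative $\gamma^+(0)$.

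Next I would apply Arzel\`a--Ascoli to the equi-$1$-Lipschitz sequence $\gamma_i$ to extract a further subsequence converging locally uniformly on $[0,\varepsilon]$ to a quasigeodesic, which by the pointwise hypothesis must be $\gamma$ itself. On a convex subset of a Riemannian manifold the space of directions $\Sigma_p K$ sits isometrically inside the unit sphere of $T_pM$, so the Alexandrov initial direction $u_i := \gamma_i^+(0)$ coincides with the ambient right derivative $\lim_{s \to 0^+}(\gamma_i(s) - \gamma_i(0))/s$, and similarly for $\gamma$. For each small $s > 0$ the uniform convergence yields
\[
\lim_{i \to \infty} \frac{\gamma_i(s) - \gamma_i(0)}{s} \;=\; \frac{\gamma(s) - \gamma(0)}{s} \;\xrightarrow{s \to 0^+}\; \gamma^+(0),
\]
whereas for each fixed $i$ the same difference quotient tends to $u_i$ as $s\to 0^+$, with $u_i \to v$. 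The desired identity $v = \gamma^+(0)$, contradicting $v \neq \gamma^+(0)$, thus reduces to interchanging the iterated limits in $i$ and $s$.

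This interchange is the main obstacle. I would justify it by a diagonal argument supported by an $i$-uniform modulus of continuity for $s \mapsto (\gamma_i(s) - \gamma_i(0))/s$ near $s = 0$. Such uniform control can be extracted from the standard concavity behaviour of quasigeodesics in a non-negatively curved Alexandrov space, applied to the distance functions from $\gamma_i(0)$, combined with the convexity of $K$ in its ambient chart; that the $\gamma_i$ start near $\gamma(0)$ in a compact region of $TK$ is essential for the uniformity. Once this estimate is in place, choosing $s_n \to 0^+$ and then $i_n \to \infty$ sufficiently rapidly produces a sequence $(\gamma_{i_n}(s_n) - \gamma_{i_n}(0))/s_n$ that is simultaneously close to $v$ and to $\gamma^+(0)$, completing the contradiction.
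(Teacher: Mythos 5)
There are two genuine gaps here. The first is your structural use of the $\R$-factor hypothesis: from ``$\gamma^+(0)$ lies in an $\R$-factor of $T_{\gamma(0)}K$'' you cannot conclude that $K$ contains an ambient geodesic through $\gamma(0)$ tangent to $\gamma^+(0)$, nor, via Lemma~\ref{lem:unique_geodesic}, that $\gamma$ coincides with such a geodesic near $0$. Take $K$ the unit disk in $\R^2$ and $\gamma(0)$ a boundary point: the tangent cone is a half-plane, so the boundary-tangent direction does lie in an $\R$-factor, yet the tangent line meets $K$ only in $\gamma(0)$, no geodesic of $K$ starts in that direction, and the quasigeodesic with that initial direction is the boundary circle rather than a straight line. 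This is not a peripheral case: the lemma is applied later (Lemma~\ref{cor:convergence_sub_alex}, Proposition~\ref{prp:equivalence_cont_def}) precisely to directions tangent to $\partial K$, where your step breaks down.

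The second gap is the step you yourself flag as the main obstacle. The interchange of the limits in $i$ and $s$, via an $i$-uniform modulus of continuity for $s\mapsto(\gamma_i(s)-\gamma_i(0))/s$, is essentially the whole content of the lemma, and the justification you offer (concavity behaviour of the distance functions from $\gamma_i(0)$) cannot deliver it: the distance from the starting point only controls the norm of the difference quotient, not its direction. The paper's proof supplies the missing mechanism by using the $\R$-factor to choose reference points $p_j\in K$ approaching $p=\gamma(0)$ from the direction $-\gamma^+(0)$, and then invoking the monotonicity of the comparison angle $t\mapsto\angle_{\gamma_i(0)}(p_j,\gamma_i(t))$ along quasigeodesics \cite[5.(iii), p.~36]{Pe07}: since this angle is close to $\pi$ at a fixed small time $t_0$ (by the assumed pointwise convergence to $\gamma$), monotonicity forces it to be close to $\pi$ already as $t\to 0^+$, which pins $\gamma_i^+(0)$ near $\gamma^+(0)$. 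Without this argument, or an actual proof of your claimed uniform modulus, the proposal does not close.
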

\begin{proof}Set $p=\gamma(0)=\lim_{i\rightarrow \infty} \gamma_i(0)$. Since $\gamma^+(0)$ is contained in an $\R$-factor of $T_{\gamma(0)}K$, we can choose a sequence of points $p_j\in K$ that converge to $p$ from the direction $-\gamma^+(0)$, i.e. initial directions of minimizing geodesics from $p$ to $p_j$ converge to $-\gamma^+(0)$. Now suppose the conclusion of the lemma does not hold. In this case we can assume that the $\gamma^+_i(0)$ converge to another direction $v\neq \gamma^+(0)$ in $T_pK$ with $\angle_p(v,\gamma^+(0))>\varepsilon$ for some $\varepsilon>0$. We can choose a small $t_0>0$ and a large $j$ such that $\angle_p(p_j,\gamma(t_0)) > \pi-\varepsilon/200$. Then for all sufficiently large $i$ we have $\angle_{\gamma_i(0)}(p_j,\gamma_i(t_0)) > \pi-\varepsilon/100$. Moreover, for sufficiently large $i$ and sufficiently small $t_1>0$ we have $\angle_{\gamma_i(0)}(p_j,\gamma_i(t_1)) < \pi- \varepsilon/2$. Hence, the angle $[0,t_0)\ni t \mapsto \angle_{\gamma_i(0)}(p_j,\gamma_i(t))$ is not decreasing in contradiction to the fact that $\gamma_i$ is a quasigeodesic, see \cite[5.(iii), p.~36]{Pe07}. 
\end{proof}

We apply the previous lemma in the proof of

\begin{lem} \label{cor:convergence_sub_alex}Let $K$ be a convex body in $\R^n$ which is smooth or polyhedral. Suppose the initial directions $\gamma_i^+(0)$ of a sequence of quasigeodesics $\gamma_i:[0,a) \To K$ converge to the initial direction $\gamma^+(0)$ of a quasigeodesic $\gamma:[0,a) \To K$ with respect to the subspace topology of $TK$. Then convergence also holds in the Alexandrov sense.
\end{lem}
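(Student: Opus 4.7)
The plan is to pass to a convergent subsequence and identify the limit's initial direction with $w := \gamma^+(0)$, using Lemma~\ref{lem:convergence_initial_directions} as the main tool and a direct Euclidean straight-line argument otherwise. First, since the $\gamma_i$ are $1$-Lipschitz and $\gamma_i(0) \to p := \gamma(0)$, Arzelà--Ascoli supplies a locally uniformly convergent subsequence $\gamma_{i_k} \to \tilde\gamma$; its limit is again a quasigeodesic by the closure of quasigeodesics under pointwise limits recalled in Section~\ref{sub:quasigeodesics}. By the very definition of Alexandrov convergence of initial directions it suffices to show $\tilde\gamma^+(0) = w$ for every such subsequential limit.

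Write $v := \tilde\gamma^+(0) \in \Sigma_p$, regarded as a Euclidean unit vector in $T_p K \subset \R^n$. If $v$ lies in an $\R$-factor of $T_p K$, then Lemma~\ref{lem:convergence_initial_directions} applied to $\gamma_{i_k} \to \tilde\gamma$ forces $\gamma_{i_k}^+(0) \to v$ in the subspace topology of $TK$, and uniqueness of Euclidean limits combined with the hypothesis $\gamma_{i_k}^+(0) \to w$ identifies $v$ with $w$. Otherwise, the failure of $v$ to lie in any $\R$-factor forces the initial arc of $\tilde\gamma$ to be a Euclidean straight segment $\tilde\gamma(t) = p + tv$ on some $[0,\varepsilon]$: for smooth $K$ because $v$ must then have strictly positive inward normal component at $p$, so $\tilde\gamma$ enters the Riemannian interior and is a Euclidean geodesic there by Corollary~\ref{cor:quasigeodesic_Riemannian}; for polyhedral $K$ because $v$ is not in the lineality space of $T_pK$, so Lemmas~\ref{cor:quasigeodesic_polyhedral} and~\ref{lem:collision_estimate} yield a positive initial interval with no bounce. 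If $w$ likewise fails to lie in an $\R$-factor, continuity of first-bounce/first-exit times in the initial data gives a common interval $[0,\varepsilon']$ on which $\gamma_{i_k}(t) = \gamma_{i_k}(0) + tw_k$ with $w_k := \gamma_{i_k}^+(0)$, and passing to the limit produces $\tilde\gamma(t) = p + tw$ there, hence $v = w$.

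The hard part will be the remaining sub-case, in which $v$ is non-tangent (respectively off the lineality) while $w$ is tangent (respectively in the lineality). The plan here is to derive a contradiction with pointwise convergence by comparing distances to the carrier face $F$ of $p$: on the one hand $\tilde\gamma(t) = p + tv$ sits at distance at least $ct$ from $F$ for some $c > 0$; on the other hand, since $w_k$ approaches the tangent/lineality direction $w$, each $\gamma_{i_k}$ stays within a tube around $F$ whose width shrinks to $0$ with $k$. For smooth $K$ this tube estimate follows from the classical grazing-chord geometry (chord length of order $\delta_k$ and inward excursion of order $\delta_k^2$, where $\delta_k$ is the normal component of $w_k$); for polyhedral $K$ it follows because bounces off facets containing $F$ preserve the lineality component of a trajectory and merely reflect its transverse component, which is small by the convergence of $w_k$ to the lineality. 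For any fixed small $t > 0$ this would force $|\gamma_{i_k}(t) - \tilde\gamma(t)|$ to be bounded below by a positive constant for large $k$, contradicting $\gamma_{i_k}(t) \to \tilde\gamma(t)$.
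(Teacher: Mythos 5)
Your architecture is reasonable and in one respect cleaner than the paper's: you case on whether the subsequential limit's direction $v=\tilde\gamma^+(0)$ lies in an $\R$-factor of $T_pK$, which is literally the hypothesis of Lemma~\ref{lem:convergence_initial_directions}, whereas the paper instead enlarges $K$ to a convex set $K'\supseteq K$ in which $w=\gamma^+(0)$ lies in an $\R$-factor and argues there. Your Case A and the description of $\tilde\gamma$ as an initial straight segment in Case B are correct.

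The resolution of your self-declared ``hard part'' (sub-case B2 for smooth $K$) has a genuine gap. In this lemma ``smooth'' only means that every boundary point has a one-dimensional normal cone, i.e.\ a $\mathcal{C}^1$ boundary, so no curvature bounds are available. The grazing-chord estimate you invoke --- inward excursion of order $\delta_k^2$ for a chord with base angle $\delta_k$ --- is a second-order statement; it is exactly the content of Lemma~\ref{lem:convergence_to_boundary}, whose proof requires a $\mathcal{C}^{2,1}$ boundary with positive definite second fundamental form, and it fails below that regularity. Concretely, Halpern's table (used in Proposition~\ref{prp_noncontinuous_example}) is three times differentiable and positively curved, yet carries a sequence of trajectories whose initial directions converge to a tangent vector of the boundary and whose pointwise limit escapes, via infinitely many accumulating bounces, a definite distance into the interior: no shrinking tube exists. (In that example the limit's initial direction is still tangent, so the lemma itself survives, but your mechanism for excluding B2 does not.) Ruling out B2 must be done at the level of initial directions, e.g.\ by the comparison-angle monotonicity underlying Lemma~\ref{lem:convergence_initial_directions} or by the paper's $K'$ device. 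A second, more easily repaired, slip: in sub-case B1 for polyhedral $K$, ``continuity of first-bounce times'' is false when $w$ lies on a facet of $T_pK$ but outside its lineality space --- for $w$ along an edge of a quadrant, trajectories starting at height $\delta_k$ with direction tilted by $\theta_k\gg\delta_k$ bounce at time $\delta_k/\sin\theta_k\to 0$. The fix is the observation you already make in B2: reflections in the facets through $p$ whose supporting hyperplanes contain the line $\R w$ preserve the velocity component along $w$, so these early bounces do not disturb the limit.
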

\begin{proof} Suppose first that $\gamma^+(0)$ points into the interior of $K$. Then there exists some $\varepsilon>0$ such that $\gamma_{|(0,\varepsilon]}$ and $\gamma_{i|(0,\varepsilon]}$ for all sufficiently large $i$ are contained in the interior of $K$. This follows from the observation that the map which sends a unit tangent vector $v\in T_p K$ pointing into the interior of $K$ to the (only) intersection (besides perhaps $p$) of the ray in the direction of this vector with the boundary of $K$ is continuous. Then $\gamma_{|[0,\varepsilon]}$ and $\gamma_{i|[0,\varepsilon]}$ for all sufficiently large $i$ are straight segments, and our assumption implies that $\gamma_{i|[0,\varepsilon]}$ converges pointwise to $\gamma_{|[0,\varepsilon]}$. Hence, in this case the claim holds.

Otherwise, we claim that there exists a convex set $K' \supseteq K$ and some $\varepsilon>0$ such that $\gamma^+(0)$ is contained in an $\R$-factor of $T_p K'$, $p=\gamma(0)$, and that $\gamma_{|[0,\varepsilon]}$ and $\gamma_{i|[0,\varepsilon]}$ for all sufficiently large $i$ are also quasigeodesics of $K'$. In the smooth case we can simply take $K'=K$. In the polyhedral case we first write $K$ as an intersection of finitely many closed half spaces $H_j$, $j=1,\ldots,n$, and define $K'$ to be the intersection of those $H_j$ for which $-\gamma^+(0)$ does not point outside of $H_j$ (at $p$). Then $K' \supseteq K$ and  $\gamma^+(0)$ is contained in an $\R$-factor of $T_p K'$ by construction. We choose $\alpha>0$ such that each unit vector $v\in T_pK$ with $\angle(v,\gamma^+(0))< \alpha$ points into the interior of each discarded closed half plane. Our assumption implies $\angle(\gamma_i^+(0),\gamma^+(0))< \alpha$ for all sufficiently large $i$. We choose $\varepsilon>0$ such that for sufficiently large $i$ the restrictions $\gamma_{|[0,\varepsilon]}$ and $\gamma_{i|[0,\varepsilon]}$ do not bounce at a supporting hyperplane of $K$ that does not contain $p$. Since reflections at supporting hyperplanes containing $p$ and $p+\gamma^+(0)$ leave the condition $\angle(v,\gamma^+(0))< \alpha$ invariant, we see that $\gamma_{|[0,\varepsilon]}$ and $\gamma_{i|[0,\varepsilon]}$ for sufficiently large $i$ do not bounce at the boundary of a discarded half plane. Hence, $\gamma_{|[0,\varepsilon]}$ and $\gamma_{i|[0,\varepsilon]}$ are also quasigeodesics of $K'$.

Now let $\gamma_{i_j}$ be a convergent subsequence of the sequence $\gamma_{i}$ with limit quasigeodesic $\bar \gamma$. Then the initial directions $\gamma^+_{i_j}(0)$ converge to $\gamma^+(0)$ by assumption and to $\bar \gamma^+(0)$ by Lemma \ref{lem:convergence_initial_directions}. Hence, $\bar \gamma^+(0)=\gamma^+(0)$ and so the claim follows.
\end{proof}

Note that the converse of Lemma \ref{cor:convergence_sub_alex} fails at the boundary of $K$. However, an equivalence between different continuity notions for quasigeodesic flows still holds, see Proposition \ref{prp:equivalence_cont_def}.

We say that a quasigeodesic flow gives rise to a \emph{continuous quasigeodesic evolution} if the following condition is satisfied. If for some sequence $(p_i,v_i) \in TX$ and for some $(p,v) \in TX$ the initial directions of the quasigeodesics $[0,\infty) \ni t \mapsto \Phi((p_i,v_i),t)$ converge to the initial direction of the quasigeodesic $[0,\infty) \ni t \mapsto \Phi((p,v),t)$ in the Alexandrov sense, then $\pi(\Phi((p_i,v_i),t_i))$ converges to $\pi(\Phi((p,v),t))$ for all $t$ and all sequences $t_i\rightarrow t$. A quasigeodesic flow gives rise to a continuous quasigeodesic evolution if and only if it is \emph{continuous} in the sense that in the condition above $\Phi((p_i,v_i),t_i))$ converges to $\Phi((p,v),t)$ in the Alexandrov sense for all $t\in [0,\infty)$ and all sequences $t_i\rightarrow t$.

For instance, in the class of examples given by quotients of compact Riemannian manifolds $M$ by isometric actions of compact Lie groups a continuous quasigeodesic flow is induced by the projection of the horizontal geodesic flow on $M$ to the quotient, cf. \cite{LT10}.

Moreover, the quasigeodesic flow of an Alexandrov space in which quasigeodesics do not branch is continuous:

\begin{lem}\label{lem:unique_implies_continuity} If quasigeodesics on an Alexandrov space do not branch, then the uniquely defined quasigeodesic flow is continuous.
\end{lem}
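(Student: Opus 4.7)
The plan is to take any sequence $(p_i,v_i)\in TX$ whose initial directions converge in the Alexandrov sense to those of $(p,v)\in TX$ and to show that the trajectories $\gamma_i(t):=\pi(\Phi((p_i,v_i),t))$ converge locally uniformly to $\gamma(t):=\pi(\Phi((p,v),t))$. Uniqueness of the flow itself is automatic from non-branching, since for every pair $(p,v)$ there is exactly one quasigeodesic (or constant curve) through $p$ with prescribed initial direction $v/|v|$.

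For the local uniform convergence I would first observe that the $\gamma_i$ have constant speeds $|v_i|\to|v|$, so they are uniformly Lipschitz. A finite-dimensional Alexandrov space is proper, so on each compact time interval the images of the $\gamma_i$ stay in a fixed compact set. By Arzelà--Ascoli together with a diagonal argument, any subsequence of $(\gamma_i)$ has a further subsequence converging uniformly on compact subsets of $[0,\infty)$ to a continuous curve $\bar\gamma$. Since pointwise limits of quasigeodesics are quasigeodesics, $\bar\gamma$ is a quasigeodesic (or constant, if $v=0$). The assumption that the initial directions converge in the Alexandrov sense forces $\bar\gamma(0)=p$ and $\bar\gamma^+(0)=\gamma^+(0)$, and non-branching then forces $\bar\gamma=\gamma$. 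Because every subsequence has a sub-subsequence with the same limit $\gamma$, the full sequence $\gamma_i$ converges to $\gamma$ uniformly on compact subsets of $[0,\infty)$.

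To upgrade this to Alexandrov-sense continuity of $\Phi$ at a time $t\ge 0$, pick any $t_i\to t$. Base-point convergence $\gamma_i(t_i)\to\gamma(t)$ follows from uniform convergence on $[0,t+1]$. For the direction component of $\Phi((p_i,v_i),t_i)$, I would invoke the flow property: the shifted curves $s\mapsto\gamma_i(t_i+s)$ are again quasigeodesics, with initial data $\Phi((p_i,v_i),t_i)$, and they converge uniformly on compact intervals to $s\mapsto\gamma(t+s)$. As pointwise convergence of quasigeodesics implies Alexandrov-sense convergence of the associated initial directions, this gives $\Phi((p_i,v_i),t_i)\to\Phi((p,v),t)$ in the Alexandrov sense, which is exactly the continuity condition.

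The only genuine subtlety is the Arzelà--Ascoli step, which needs properness of $X$ and a uniform speed bound; both are automatic in the finite-dimensional setting. Everything else is a consequence of two inputs already available: non-branching (forcing uniqueness of subsequential limits) and the standard fact that pointwise limits of quasigeodesics remain quasigeodesics.
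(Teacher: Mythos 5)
Your proof is correct and follows essentially the same route as the paper: extract a convergent subsequence, note that its limit is a quasigeodesic with the same initial direction as $\gamma$ (by the definition of Alexandrov-sense convergence), and conclude from non-branching that the limit must be $\gamma$ itself. You spell out the Arzel\`a--Ascoli step and the upgrade to convergence of the directions at later times, which the paper leaves implicit, but the underlying argument is identical.
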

\begin{proof}Suppose to the contrary that there is a sequence of quasigeodesics $\gamma_i :[0,a)\rightarrow X$ whose initial directions converge to the initial direction of a quasigeodesic $\gamma: [0,a)\rightarrow X$ in the Alexandrov sense, but which does not converge pointwise to $\gamma$ respecting the parametrizations. Then there also exists a converging subsequence whose limit is a quasigeodesic distinct from $\gamma$ but with the same initial direction as $\gamma$ in contradiction to our assumption.
\end{proof}

We will apply this statement in the proof of Theorem \ref{thm:convex_smooth}, see Section \ref{sub:proof_thm_B}.

We call a unit tangent vector $v \in TX$ \emph{bifurcating} if it is the initial direction of two quasigeodesics $\gamma_1, \gamma_2:[0,a) \To X$ whose restrictions to $[0,\varepsilon]$ disagree for any $\varepsilon > 0$. In case of a convex body $K$ any tangent vector that points into the interior of $K$ is not bifurcating. Moreover, no tangent vector of a polyhedral convex body is bifurcating, cf. Lemma \ref{cor:quasigeodesic_polyhedral}.

\begin{lem}\label{lem:bifurc_conv} Suppose the initial directions of a sequence of quasigeodesics $\gamma_i:[0,a) \To X$ converge to the initial direction of a quasigeodesic $\gamma:[0,a) \To X$ in the Alexandrov sense. If $\gamma^+(0)$ is not bifurcating, then there exists some $\varepsilon > 0$ such that $\gamma_{i|[0,\varepsilon]}$ converges pointwise to $\gamma_{|[0,\varepsilon]}$.
\end{lem}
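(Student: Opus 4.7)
The plan is to combine Arzelà--Ascoli on the sequence $\gamma_i$ with a compactness-plus-lower-semi-continuity argument, so as to upgrade the \emph{a priori} non-uniform ``not bifurcating'' hypothesis into a uniform lower bound on the initial interval of agreement with $\gamma$.

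First, fix $a'\in(0,a)$ and consider the family $\{\gamma_i|_{[0,a']}\}\subset C([0,a'],X)$. Quasigeodesics are $1$-Lipschitz (being parametrized by arclength, cf.\ Section~\ref{sub:quasigeodesics}) and $\gamma_i(0)\to\gamma(0)$, so the family is equicontinuous, and $X$ being locally compact ensures that its pointwise orbits are relatively compact. Hence by Arzelà--Ascoli the closure of $\{\gamma_i|_{[0,a']}\}$ in the uniform topology is compact. Let $K$ be the set of subsequential uniform limits of $(\gamma_i|_{[0,a']})$; it is nonempty, and a diagonal argument shows that a limit of subsequential limits is itself subsequential, so $K$ is closed in the compact closure and therefore compact. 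Each $\tilde\gamma\in K$ is a pointwise limit of quasigeodesics and hence itself a quasigeodesic (Section~\ref{sub:quasigeodesics}), and $\tilde\gamma^+(0)=\gamma^+(0)$ directly from the definition of convergence of initial directions in the Alexandrov sense. Since $\gamma^+(0)$ is not bifurcating, the pair $(\gamma,\tilde\gamma)$ cannot disagree on every $[0,\varepsilon]$, so
\[
\tau(\tilde\gamma)\ :=\ \sup\{s\in[0,a']:\tilde\gamma|_{[0,s]}=\gamma|_{[0,s]}\}\ >\ 0
\]
for every $\tilde\gamma\in K$.

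The key step -- and what I expect to be the only real obstacle -- is to show the uniform lower bound $\inf_{\tilde\gamma\in K}\tau(\tilde\gamma)>0$, since ``not bifurcating'' itself is not a uniform statement. For this I would verify that $\tau\colon K\to(0,a']$ is lower semi-continuous: if $\tilde\gamma_k\to\tilde\gamma$ uniformly and $\tau(\tilde\gamma_k)\geq s$ for all large $k$, then $\tilde\gamma_k\equiv\gamma$ on $[0,s]$ and passing to the uniform limit yields $\tilde\gamma\equiv\gamma$ on $[0,s]$, so $\tau(\tilde\gamma)\geq s$. A lower semi-continuous strictly positive function on a nonempty compact set attains its infimum, so $\varepsilon:=\min_{K}\tau>0$.

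Finally I would conclude pointwise convergence on $[0,\varepsilon]$ by the standard subsequence principle. Suppose, to the contrary, that $\gamma_i(t)\not\to\gamma(t)$ for some $t\in[0,\varepsilon]$; extract a subsequence $\gamma_{i_k}$ with $d(\gamma_{i_k}(t),\gamma(t))\geq\delta>0$ and, by Arzelà--Ascoli applied to this subsequence, pass to a further subsequence converging uniformly on $[0,a']$ to some $\tilde\gamma\in K$. Since $\tau(\tilde\gamma)\geq\varepsilon\geq t$, we have $\tilde\gamma(t)=\gamma(t)$, contradicting $d(\tilde\gamma(t),\gamma(t))\geq\delta$. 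Hence $\gamma_i|_{[0,\varepsilon]}\to\gamma|_{[0,\varepsilon]}$ pointwise, as claimed.
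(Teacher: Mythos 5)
Your overall architecture --- Arzel\`a--Ascoli, the compact set $K$ of subsequential limits, and the reduction of the lemma to a uniform positive lower bound on the first branching time $\tau$ --- is a reasonable and more explicit version of the paper's own argument, which is a short contradiction: from a putative failure of convergence on $[0,\varepsilon]$ one extracts a subsequential limit quasigeodesic distinct from $\gamma_{|[0,\varepsilon]}$ but with the same initial direction, and this is read off against the non-bifurcation hypothesis. You correctly isolate $\inf_K\tau>0$ as the crux. However, your justification of that bound is not correct: what you actually prove is that the superlevel sets $\{\tau\geq s\}$ are closed, i.e.\ that $\tau$ is \emph{upper} semicontinuous, and you then invoke the minimum-attainment property of \emph{lower} semicontinuous functions. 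These are not interchangeable. An upper semicontinuous, strictly positive function on a compact set can have infimum $0$ (on $\{0\}\cup\{1/k\}$ take $f(1/k)=1/k$ and $f(0)=1$; all superlevel sets are closed). And $\tau$ genuinely fails to be lower semicontinuous in your setting: a sequence $\tilde\gamma_k\in K$ agreeing with $\gamma$ only on $[0,1/k]$ and branching off immediately afterwards by an amount tending to $0$ can converge uniformly to $\gamma$ itself, for which $\tau=a'$; then $\tau$ of the limit is $a'$ while $\liminf_k\tau(\tilde\gamma_k)=0$. Nothing in your argument excludes this scenario, so the step ``$\varepsilon:=\min_K\tau>0$'' is unjustified.

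The missing input is that elements of $K$ cannot branch off $\gamma$ at arbitrarily small \emph{positive} times, which is strictly stronger than the stated hypothesis that $\gamma^+(0)$ is not bifurcating (the latter, as defined, only forbids two quasigeodesics with this initial direction from disagreeing on \emph{every} initial interval, i.e.\ branching at time $0$ itself). To close the gap you would need to show that a sequence of subsequential limits branching at times $\tau_k\searrow 0$ forces the existence of a single quasigeodesic with initial direction $\gamma^+(0)$ disagreeing with $\gamma$ on every $[0,\delta]$, or otherwise apply the non-bifurcation hypothesis directly to the subsequential limits as the paper does. As written, the proof is incomplete at exactly the step you flagged as the key obstacle.
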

\begin{proof} Otherwise there exists for any $\varepsilon>0$ a subsequence $\gamma_{i_j}$ of $\gamma_{i}$ such that $\gamma_{i_j|[0,\varepsilon]}$ converges pointwise to a quasigeodesic distinct from $\gamma_{|[0,\varepsilon]}$ but with the same initial direction. This contradicts the assumption that $\gamma^+(0)$ is not bifurcating.
\end{proof}

In case of a convex body $K$, which is smooth or polyhedral, the following proposition shows that continuity of a quasigeodesic flow is equivalent to continuity of the corresponding billiard evolution in our earlier sense.

\begin{prp} \label{prp:equivalence_cont_def} A quasigeodesic flow on a convex body $K$, which is smooth or polyhedral, is continuous in the Alexandrov sense if and only if the induced billiard evolution is continuous with respect to the subspace topology of $TK$.
\end{prp}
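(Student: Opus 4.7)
The plan is to reduce both implications to Lemma \ref{cor:convergence_sub_alex} together with compactness of quasigeodesic trajectories valued in $K$ and the fact that quasigeodesics are closed under pointwise limits. The forward direction will be essentially immediate; the converse requires an Arzelà--Ascoli style subsequence extraction, because the converse of Lemma \ref{cor:convergence_sub_alex} can fail at boundary points of $K$.

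For the forward direction, suppose $\Phi$ is continuous in the Alexandrov sense. Let $(p_i,v_i) \to (p,v)$ in the subspace topology of $TK$ and $t_i \to t$. Lemma \ref{cor:convergence_sub_alex} yields that the initial directions of the quasigeodesics $\gamma_i := \pi(\Phi((p_i,v_i),\cdot))$ converge in the Alexandrov sense to the initial direction of $\gamma := \pi(\Phi((p,v),\cdot))$; Alexandrov-sense continuity of $\Phi$ then gives $\pi(\Phi((p_i,v_i),t_i)) \to \pi(\Phi((p,v),t))$, as required.

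For the converse, assume $\pi\circ \Phi$ is continuous in the subspace topology of $TK$. Let $(p_i,v_i)\in TK$ be such that the initial directions of $\gamma_i := \pi(\Phi((p_i,v_i),\cdot))$ converge in the Alexandrov sense to the initial direction of $\gamma := \pi(\Phi((p,v),\cdot))$, and let $t_i \to t$. In particular $p_i \to p$, and the $v_i \in T_{p_i}K\subset \R^n$ are unit vectors, hence bounded. From any subsequence of $(p_i,v_i)$ one can therefore extract a further subsequence along which $v_i \to v' \in T_pK$ in the subspace topology. The hypothesized subspace-topology continuity gives $\gamma_i \to \gamma' := \pi(\Phi((p,v'),\cdot))$ pointwise along that subsequence. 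Since quasigeodesics are closed under pointwise limits, $\gamma'$ is a quasigeodesic with right tangent vector $v'$, and the Alexandrov-sense hypothesis forces $v' = v$ and hence $\gamma' = \gamma$. Consequently every subsequence of $\gamma_i$ has a further subsequence converging pointwise to $\gamma$, which promotes to pointwise convergence of the full sequence; equicontinuity of the unit-speed curves $\gamma_i$ valued in the compact set $K$ then handles the $t_i \to t$ adjustment and yields $\pi(\Phi((p_i,v_i),t_i)) \to \pi(\Phi((p,v),t))$.

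The main obstacle is precisely this converse direction: because the converse of Lemma \ref{cor:convergence_sub_alex} can fail at boundary points of $K$, Alexandrov-sense convergence of initial directions does not translate directly into subspace-topology convergence of initial vectors. The subsequence extraction together with the identification $v' = v$ forced by the pointwise limit $\gamma' = \gamma$ is what bridges this gap.
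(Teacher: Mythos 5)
Your forward direction is exactly the paper's: Lemma \ref{cor:convergence_sub_alex} upgrades subspace convergence of initial data to Alexandrov convergence, and Alexandrov continuity of $\Phi$ finishes. The converse, however, has a genuine gap at the very first step of the subsequence extraction: you assert that a subsequential limit $v'$ of the $v_i$ (taken in the ambient $T\R^n$) lies in $T_pK$, so that $(p,v')\in TK$ and the hypothesized continuity of $\pi\circ\Phi$ on $TK\times\R$ can be applied to identify the limit curve with $\pi(\Phi((p,v'),\cdot))$. But $TK$ is not closed in $T\R^n$: tangent cones degenerate as one approaches the boundary. Take $K$ the unit disk, $p=(1,0)$, $p_i=(1-1/i,0)$ interior, $v_i=(1,0)$ for all $i$. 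Then $v_i\to v'=(1,0)\notin T_pK$, while the trajectories bounce at the boundary at times tending to $0$ and converge pointwise to the trajectory leaving $p$ in direction $v=(-1,0)$; so the hypothesis of the converse is satisfied with $v=(-1,0)$, yet $\Phi((p,v'),\cdot)$ is simply undefined and the subspace-continuity hypothesis says nothing about this sequence. The same phenomenon occurs at corners of a polyhedral body. This is not a boundary technicality to be waved away: the case $v'\notin T_pK$ is precisely the content of the proposition, namely limiting trajectories whose ``reflection'' happens at time $0$, where the Alexandrov limit direction differs from the Euclidean limit of the $v_i$. (A telltale sign is that your converse never uses the smooth-or-polyhedral hypothesis, whereas the paper explicitly remarks that the general convex case requires an additional argument.)

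The paper bridges exactly this gap by a different reduction: it suffices to produce times $s_i\searrow 0$ with $\Phi(s_i,v_i)\to v$ \emph{in the subspace topology}, after which one restarts the trajectories at time $s_i$, where the hypothesis does apply, and uses the flow property. Producing such $s_i$ is where the geometry enters: in the polyhedral case (and when $v$ points into the interior) one uses Lemma \ref{lem:bifurc_conv} together with a uniform bound on the number of bounces near $p$ to see that the polygonal chains $\gamma_i$ must align with $\gamma$ shortly after time $0$; in the smooth case with $v$ tangent to the boundary one can take $s_i=0$ and rule out a deviant subsequential limit $w\neq v$ via Lemma \ref{lem:convergence_initial_directions}, using that the tangent cone at a smooth boundary point is a half-space. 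Your argument does correctly handle the subcase $v'\in T_pK$ (and your sub-subsequence bookkeeping and the Lipschitz handling of $t_i\to t$ are fine), but as written it silently excludes the only hard case.
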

\begin{proof}By Lemma \ref{cor:convergence_sub_alex} a continuous quasigeodesic flow on $K$ induces a continuous billiard evolution on $K$. Conversely, suppose we are given a quasigeodesic resp. billiard flow $\Phi:\R \times TK \To TK$ that induces a continuous billiard evolution. To prove that $\Phi$ is continuous in the Alexandrov sense it suffices to show that if a sequence $v_i\in TK$ converges to $v\in TK$ in the Alexandrov sense (with respect to the quasigeodesics defined by $\Phi$), then there exists a sequence $s_i \searrow 0$ such that $\Phi(s_i,v_i)$ converges to $v\in TK$ with respect to the subspace topology of $TK$.

If $K$ is polyhedral or if $v$ points into the interior of $K$, then by Lemma \ref{lem:bifurc_conv} there is some $\varepsilon>0$ such that the quasigeodesics $\gamma_i:[0,\varepsilon]\To K$ defined by $\Phi$ with initial condition $\Phi(v_i)$ converge pointwise to the quasigeodesic $\gamma:[0,\varepsilon] \To K$ defined by $\Phi$ with initial condition $\Phi(v)$. Here $\varepsilon$ can be chosen such that $\gamma$ is a straight segment. For sufficiently large $i$ and small $\varepsilon$ the number of bounces of $\gamma_i$ is uniformly bounded, see the citation of \cite{BFK98} in Section \ref{sub:billiards} for the polyhedral case. In the case in which $K$ is smooth and $v$ points into the interior of $K$ the bound can be take to be one because of the continuity of outer normal vectors of $K$, cf. the argument in the first paragraph of the proof of Lemma \ref{cor:convergence_sub_alex}. Hence, $\gamma_i$ are polygonal chains with a uniform bound on the number of breaks which converge pointwise to $\gamma$ and which have the same arclength as $\gamma$. This implies the existence of the desired $s_i$ in this case, for, otherwise the chains $\gamma_i$ would not make enough progress in the direction of $\gamma$. 

If $K$ is smooth and $v$ is tangent to its boundary, we can take $s_i=0$. Otherwise there would be a subsequence $\gamma_{i_j}$ of $\gamma_i$ whose initial directions converge to some $w\neq v$ with respect to the subspace topology, and which converges pointwise to a quasigeodesic with initial direction $v$. Since any tangent cone at the boundary of $K$ is a closed half space, 
Lemma \ref{lem:convergence_initial_directions} implies that the initial directions of $\gamma_{i_j}$ converge to $v$ with respect to the subspace toplogy, a contradiction. This completes the proof of the proposition.
\end{proof}

\begin{rem} The statements of Lemma \ref{cor:convergence_sub_alex} and Proposition \ref{prp:equivalence_cont_def} for general convex bodies require an additional argument which is not treated here.
\end{rem}

We close this section with the following question.
\begin{qst}\label{qst:alex_topology} Is there a natural topology on the tangent cone bundle of an Alexandrov space that induces the notion of convergence in the Alexandrov sense introduced above?
\end{qst}
\subsection{Uniqueness of continuous quasigeodesic flows}\label{sub:further_properties}

In this section we discuss uniqueness and related properties of continuous quasigeodesic flows. These discussions are not needed in later sections and could be skipped.

Recall from Lemma \ref{lem:unique_geodesic} that no quasigeodesic can branch from a geodesic. This implies that the continuous quasigeodesic flows in the settings of Theorems \ref{thm:billiard_orbifold} and \ref{thm_3_4_polyhedral}, which are induced by an orbifold geodesic flow, are unique: in a polyhedral Alexandrov space (or an orbifold) a quasigeodesic in the interior of a face (stratum) is a geodesic and can thus only branch, when it hits a face (a singular stratum) of a higher codimension which is at least $2$. Besides, in such spaces any quasigeodesic for which this happens at time $t_0$ can be approximated by quasigeodesics for which this does not happen before time  $t_0+ \varepsilon$ for some $\varepsilon>0$. Therefore, in these cases a continuous quasigeodesic flow is determined by the behaviour of its geodesics. 

Uniqueness of the quasigeodesic flow in the setting of Theorem \ref{thm:convex_smooth} will be shown in Section \ref{sub:proof_thm_B}. In fact, in this case we first show that quasigeodesics cannot branch. In summary, we have

\begin{prp}\label{prp:uniqueness_rds} The continuous quasigeodesic flows in Theorems \ref{thm:billiard_orbifold}, \ref{thm:convex_smooth} and \ref{thm_3_4_polyhedral} are unique.
\end{prp}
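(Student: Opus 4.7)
The plan is to handle Theorems \ref{thm:billiard_orbifold} and \ref{thm_3_4_polyhedral} together, since in both the candidate flow is the polyhedral orbifold geodesic flow, and then to dispatch Theorem \ref{thm:convex_smooth} separately by reducing uniqueness to a non-branching statement.

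For the two polyhedral orbifold settings, let $X$ denote the space in question and suppose that $\Phi$ and $\Phi'$ are two continuous quasigeodesic flows on $X$. My first observation is that a quasigeodesic passing through the interior of a top-dimensional stratum is a geodesic there by Corollary \ref{cor:quasigeodesic_Riemannian}, while Lemma \ref{lem:unique_geodesic} forbids any quasigeodesic from branching off a geodesic with the same tangent. Hence two quasigeodesics with a common initial condition $(p,v)$ can begin to differ only at a time $t_0$ at which the trajectory first enters a singular stratum of codimension at least two; up to that time, both $\Phi$ and $\Phi'$ are forced to coincide with the canonical flow obtained by projecting straight lines from the orbifold universal cover.

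To extend uniqueness past such a time $t_0$, I would approximate $(p,v)$ by initial data $(p_i,v_i)\to(p,v)$ whose quasigeodesics miss all codimension-$\geq 2$ strata on a common interval $[0,t_0+\varepsilon]$. Such approximating initial conditions exist because, locally near a point of a codimension-$\geq 2$ stratum, the set of unit directions whose developed straight-line continuation eventually meets that stratum has codimension at least two in the unit tangent sphere and is therefore nowhere dense. On each approximating trajectory $\Phi$ and $\Phi'$ already coincide by the previous paragraph, and continuity in the Alexandrov sense forces $\Phi((p,v),t_0+\varepsilon)=\Phi'((p,v),t_0+\varepsilon)$. Since branching times are locally finite in the polyhedral setting by Lemma \ref{cor:quasigeodesic_polyhedral}, iterating this step propagates the equality $\Phi=\Phi'$ for all times.

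For Theorem \ref{thm:convex_smooth} the plan is shorter: Section \ref{sub:proof_thm_B} will establish that no quasigeodesic on a $\mathcal{C}^{2,1}$ convex body with positive definite second fundamental form bifurcates. Non-branching combined with the existence assertion of Theorem \ref{thm:convex_smooth} immediately yields uniqueness, because two quasigeodesics sharing an initial condition must then agree for all time; Lemma \ref{lem:unique_implies_continuity} moreover reaffirms that the resulting unique flow is automatically continuous. The main obstacle across the three cases is therefore the non-branching assertion underlying Theorem \ref{thm:convex_smooth}; in the polyhedral settings the delicate point is instead the codimension count that makes the density of trajectories avoiding higher-codimension strata, and hence the approximation step, go through.
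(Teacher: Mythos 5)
Your proposal follows essentially the same route as the paper: in the polyhedral/orbifold cases of Theorems \ref{thm:billiard_orbifold} and \ref{thm_3_4_polyhedral} one combines Corollary \ref{cor:quasigeodesic_Riemannian} and Lemma \ref{lem:unique_geodesic} to localize possible branching at codimension-$\geq 2$ strata, approximates by quasigeodesics avoiding such strata up to time $t_0+\varepsilon$, and invokes continuity, while for Theorem \ref{thm:convex_smooth} uniqueness is reduced to the non-branching statement established in Section \ref{sub:proof_thm_B}. One small inaccuracy: the set of directions at a point whose developed continuation meets a fixed codimension-$2$ stratum has codimension $1$ (not $2$) in the unit tangent sphere, but this is still nowhere dense, so the approximation step goes through unchanged.
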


We remark that on many Alexandrov space without boundary the geodesic flow exists almost everywhere for all times \cite{BMS22,KLP21,KL20}. By Lemma \ref{lem:unique_geodesic} each quasigeodesic flow coincides with the geodesic flow on the domain of the latter. If each initial direction of a quasigeodesic is a limit in the Alexandrov sense of initial directions of geodesics from an almost everywhere defined geodesic flow, then a continuous quasigeodesic flow is uniquely determined and a reversible dynamical system by continuity if it exists. For instance, this conclusion holds for quotients of compact Riemannian manifolds $M$ by isometric actions of compact Lie groups, cf. \cite[Proposition~12.1]{KL20}. In this case convergence in the Alexandrov sense is equivalent to convergence with respect to the quotient topology of the tangent cone bundle. 

In particular, boundaries and doubles of convex bodies have geodesic flows that exist almost everywhere for all times \cite{KLP21}. Uniqueness of continuous quasigeodesic flows on convex bodies would follow if each continuous quasigeodesic flow could be lifted to a continuous quasigeodesic flow on its double. Note in this respect that quasigeodesics on a convex body are precisely the projections of quasigeodesics of its double. A negative answer to Question \ref{sec:continuous_nonsmooth_bound} in Section \ref{sub:optimality} would imply that such a lift always exists.

\section{Billiards on convex bodies with $\mathcal{C}^{2,1}$ boundary} \label{sec:general_convex}

\subsection{Proof of Theorem \ref{thm:convex_smooth}}\label{sub:proof_thm_B} In this section we prove Theorem \ref{thm:convex_smooth} and thereby illustrate the flexibility of general continuous convex billiard tables. Our proof relies at several places on a Taylor expansion of the exponential map. Recall that on a $\mathcal{C}^{2,1}$ manifold geodesics are of class $\mathcal{C}^{2,1}$, see e.g. \cite[Theorem~14]{dC92,Mi15}. Therefore, for a fixed point $p \in M$ and a fixed unit vector $u\in T_p M$ we have a Taylor expansion of the form 
\begin{equation}\label{eq:exp_taylor}
c_u(s):= \exp_p(su) = p+su+\frac{1}{2} \Pi_p(u,u)s^2 + o(s^2),
\end{equation}
where $\Pi$ denotes the second fundamental form of $M$, cf. \cite{dC92,MMS14}. The latter depends Lipschitz continuously on $p$. 
\begin{lem}\label{thm:reg_exp_map} The expansion (\ref{eq:exp_taylor}) holds uniformly for all unit vectors $u\in T_pM$.
\end{lem}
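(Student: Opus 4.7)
The plan is to derive the expansion from Taylor's theorem with integral remainder applied to the second-order ODE satisfied by the geodesic. Since the ambient manifold in question is of class $\mathcal{C}^{2,1}$, the geodesic $c_u$ is itself $\mathcal{C}^{2,1}$, and its ambient second derivative equals the second fundamental form evaluated on the velocity:
\[
c_u''(t) = \Pi_{c_u(t)}(c_u'(t), c_u'(t)).
\]
Thus
\[
c_u(s) - p - s u - \tfrac{1}{2} \Pi_p(u,u)\, s^2 = \int_0^s (s - t)\bigl[\Pi_{c_u(t)}(c_u'(t), c_u'(t)) - \Pi_p(u,u)\bigr]\, dt,
\]
and the task reduces to estimating the bracketed integrand uniformly in $u$ on the unit sphere $T^1_p M$.

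Next I would bound the integrand using two ingredients. First, by the paper's hypothesis the second fundamental form depends Lipschitz continuously on the base point and is jointly continuous as a symmetric bilinear form; so there is a constant $L$ with
\[
\bigl|\Pi_q(v,v) - \Pi_p(u,u)\bigr| \leq L\bigl(|q - p| + |v - u|\bigr)
\]
for $q$ near $p$ and $v$ near $u$. Second, by the standard theory of ODEs with Lipschitz right-hand side applied to the geodesic equation, the flow depends Lipschitz continuously on initial data; in particular there are constants $C_1, C_2$ such that $|c_u(t) - p| \leq C_1 |t|$ and $|c_u'(t) - u| \leq C_2 |t|$ for all sufficiently small $|t|$. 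The point is that, by compactness of the unit sphere $T^1_p M$ and continuity of these ODE estimates in the initial velocity, the constants $C_1$ and $C_2$ can be chosen independently of $u$. Combining the two ingredients gives $\bigl|\Pi_{c_u(t)}(c_u'(t),c_u'(t)) - \Pi_p(u,u)\bigr| \leq C |t|$ uniformly in $u$, and inserting this in the integral yields an error bounded by $C' |s|^3$, which is $o(s^2)$ uniformly in $u$.

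The main obstacle is the uniformity of the Lipschitz dependence of $c_u$ and $c_u'$ on the initial direction $u$. Once this is established, the rest of the argument is a direct application of the triangle inequality and integration. The uniformity itself comes down to applying the classical Picard/Grönwall-type ODE estimates to the geodesic system in local coordinates, where the Christoffel symbols are Lipschitz by the $\mathcal{C}^{2,1}$ assumption; these estimates produce constants that depend continuously on the initial data, so compactness of $T^1_p M$ converts pointwise dependence into uniform dependence. With this in place, the expansion (\ref{eq:exp_taylor}) holds with the remainder bounded uniformly over all unit vectors $u \in T_p M$, as claimed.
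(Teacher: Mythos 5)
Your argument is correct and is essentially the paper's own proof: both rest on the identity $c_u''(t)=\Pi_{c_u(t)}(c_u'(t),c_u'(t))$, the unit-speed condition, and the Lipschitz continuity of $\Pi$, fed into the integral form of the Taylor remainder to get a uniform $O(s^3)$ error. The only cosmetic difference is that you bound the difference $\Pi_{c_u(t)}(c_u'(t),c_u'(t))-\Pi_p(u,u)$ in the first-order remainder directly, where the appeals to Picard/Gr\"onwall and compactness of $T^1_pM$ are unnecessary since $|c_u(t)-p|\le |t|$ and $|c_u'(t)-u|\le |t|\sup\|\Pi\|$ already hold with manifestly $u$-independent constants, while the paper writes the remainder with one more derivative of $\Pi$ along the curve.
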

\begin{proof}
The remainder term $R_u(s)$ can be expressed as
\[
		R_u(s) = \frac{1} {2} \int_0^{s}  \left(\frac{\partial } {\partial t} \Pi_{c_u(t)}(c_u'(t),c_u'(t)) \right) t^2 dt.
\]
Therefore, the claim follows from $c_u''(t)=\Pi_{c_u(t)}(c_u'(t),c_u'(t))$, $\left\|c_u'(t)\right\|=1$ and the Lipschitz continuity of $\Pi$.
\end{proof}
 
First we use (\ref{eq:exp_taylor}) in order to establish a metric curvature condition introduced in \cite{AB10} for the boundary of a class of convex bodies. A \emph{chord} of the boundary $\partial K$ of a convex body $K$ is a segment in $K$ that connects two points on $\partial K$. The \emph{base angle at $p$} of a chord $\gamma$ of $\partial K$ at an endpoint $p$ of $\gamma$ is the angle formed by the direction of $\gamma$ at $p$ and $T_pK$. According to \cite[Definition~4.1]{AB10} the boundary $\partial K$ has \emph{extrinsic curvature} $\geq A >0$ \emph{in the base-angle sense at $p$} if the base angles $\alpha$ at $p$ of chords of length $s$ from $p$ satisfy
\[
				\liminf_{s\To 0} 2\alpha/s \geq A.
\]
 If this holds for all boundary points, then $\partial K$ is said to have \emph{extrinsic curvature} $\geq A >0$ \emph{in the base-angle sense}.

\begin{lem}\label{lem:extrinsic_curvature} Let $K$ be a convex body in $\R^n$ whose boundary is of class $\mathcal{C}^{2,1}$ and has a positively definite second fundamental form. Then the boundary of $K$ has extrinsic curvature $\geq A$ for some $A>0$ in the base-angle sense.
\end{lem}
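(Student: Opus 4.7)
The plan is to apply the Taylor expansion \eqref{eq:exp_taylor} of the exponential map on the Riemannian manifold $\partial K$ equipped with its induced metric, together with Lemma \ref{thm:reg_exp_map}, and translate it into an asymptotic statement about base angles of chords.

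First I would note that near any $p \in \partial K$ the exponential map $\exp_p : T_p \partial K \to \partial K$ is a local diffeomorphism, so every sufficiently short chord from $p$ reaches a boundary point of the form $q = \exp_p(tu)$ for some unit $u \in T_p \partial K$ and some $t>0$. Applying \eqref{eq:exp_taylor} gives
\[
q - p = tu + \tfrac{1}{2} \Pi_p(u,u)\, t^{2} + o(t^{2}),
\]
where $\Pi_p(u,u)$ is a normal vector to $\partial K$ at $p$. Decomposing $q-p$ into its components parallel and orthogonal to $T_p \partial K$, the tangential component has length $t + O(t^{2})$ while the normal component has length $\tfrac{1}{2}|\Pi_p(u,u)|\,t^{2} + o(t^{2})$. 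Hence the chord length $s := |q-p|$ equals $t + O(t^{2})$, and the base angle $\alpha$ at $p$ satisfies
\[
\sin \alpha \; = \; \frac{\tfrac{1}{2}|\Pi_p(u,u)|\,t^{2} + o(t^{2})}{t + O(t^{2})} \; = \; \tfrac{1}{2}|\Pi_p(u,u)|\,s + o(s).
\]
Since $\alpha \to 0$ as $s \to 0$, this yields $\liminf_{s \to 0} 2\alpha / s \geq |\Pi_p(u,u)|$.

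To obtain a uniform lower bound, I would invoke that $\partial K$ is compact and that $\Pi$ is positive definite, so that $|\Pi_p(u,u)|$ attains a strictly positive minimum $A$ over the (compact) unit tangent bundle of $\partial K$. Lemma \ref{thm:reg_exp_map} ensures that the remainder term in \eqref{eq:exp_taylor} is uniform in $u$, and its Lipschitz dependence on $p$ (stated after \eqref{eq:exp_taylor}) combined with compactness makes the $o(t^{2})$ term uniform in $p$ as well. Consequently the estimate $\liminf_{s \to 0} 2\alpha / s \geq A$ holds at every $p \in \partial K$ for every chord from $p$, which is exactly the claim.

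The conceptually substantive step is a one-liner: the geodesic $\exp_p(tu)$ in $\partial K$ bends away from $T_p \partial K$ at second order with rate $\Pi_p(u,u)$, and this bending passes to the chord direction as a first-order opening angle. The main obstacle is purely bookkeeping—tracking that the remainder $o(t^{2})$ survives division by $s \sim t$ as $o(s)$, and that all the implied constants can be taken independent of $p$ and $u$; both points are handled by Lemma \ref{thm:reg_exp_map} and the compactness of $\partial K$.
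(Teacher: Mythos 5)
Your proposal is correct and follows essentially the same route as the paper: expand $\exp_p(su)$ via \eqref{eq:exp_taylor}, read off the normal component $\tfrac12\|\Pi_p(u,u)\|s^2+o(s^2)$ and the chord length $s+o(s)$, deduce $\alpha=\tfrac12\|\Pi_p(u,u)\|s+o(s)$, and conclude by positive definiteness of $\Pi$ together with compactness and the uniformity supplied by Lemma~\ref{thm:reg_exp_map}. No issues.
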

\begin{proof} For $p \in M=\partial K$ we represent points on $M$ close to $p$ as $\exp_p(su)$ with unit vectors $u\in T_pM$. From the Taylor expansion (\ref{eq:exp_taylor}) we obtain the following Taylor expansion for the distance $d$ from $\exp_p(su)$ to the tangent space $T_pM$
\[
		d = \frac 1 2 \left\| \Pi(u,u) \right\| s^2 +o(s^2).
\]
Moreover, for the extrinsic distance between $p$ and $\exp_p(su)$ we obtain $\left\|p- \exp_p(su)\right\| = s + o(s).$ Both together imply for the base angle $\alpha$ between $T_pM$ and the segment between $p$ and $\exp_p(su)$ that $\alpha = \frac 1 2 \left\| \Pi(u,u) \right\| s +o(s)$. Hence, $\partial K$ has extrinsic curvature $\geq \left\| \Pi(u,u) \right\| > 0$ at $p$. Since $\Pi$ is positive definite by assumption, the lemma follows now by compactness and continuity.
\end{proof}
Lemma \ref{lem:extrinsic_curvature} allows us to apply a result of Alexander and Bishop in the proof of the following statement, which generalizes results of Halpern \cite{Hal77} and Gruber \cite{Gr90}.

\begin{lem}\label{lem:convergence_to_boundary} Let $K$ be a convex body in $\R^n$ whose boundary is of class $\mathcal{C}^{2,1}$ and has a positively definite second fundamental form. If the initial directions of billiard trajectories converge to a tangent vector in the boundary, then the trajectories converge locally uniformly to the boundary.
\end{lem}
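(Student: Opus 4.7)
The strategy is to upgrade the extrinsic curvature bound from Lemma~\ref{lem:extrinsic_curvature} to a Sturm-type comparison for the distance-to-boundary function along quasigeodesics, using the result of Alexander and Bishop \cite{AB10}. By that lemma, $\partial K$ has extrinsic curvature $\geq A$ in the base-angle sense for some $A>0$. Alexander and Bishop's theorem transforms this extrinsic bound into a precise concavity property of the intrinsic distance $h(t) := d(\gamma(t), \partial K)$ along any quasigeodesic $\gamma$ of $K$; heuristically, one should read their statement as the barrier-sense inequality $h''(t) + A\, h(t) \leq 0$, i.e.\ $h$ is $F_A$-concave along $\gamma$.

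Given a sequence of billiard trajectories $\gamma_i$ with $\gamma_i^+(0)\to v$ for some unit vector $v\in T_p\partial K$ tangent to $\partial K$ at $p\in \partial K$, I would set $h_i(t):=d(\gamma_i(t),\partial K)$ and first verify that both $h_i(0)\to 0$ and the one-sided derivative $h_i'(0^+)\to 0$. The former is immediate from $\gamma_i(0)\to p\in \partial K$. For the latter, one writes $h_i'(0^+)=-\langle N_i,\gamma_i^+(0)\rangle$, where $N_i$ is the outward unit normal to $\partial K$ at the foot of the perpendicular from $\gamma_i(0)$; since $N_i\to N_p$ and $\gamma_i^+(0)\to v$ with $v\perp N_p$, this inner product vanishes in the limit.

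A Sturm comparison against the solution $y_i(t):=h_i(0)\cos(\sqrt{A}\,t)+(h_i'(0^+)/\sqrt{A})\sin(\sqrt{A}\,t)$ of $y''+Ay=0$ then yields $h_i(t)\leq y_i(t)$ on any subinterval of $[0,\pi/(2\sqrt{A})]$ on which $y_i>0$. For large $i$ the initial data $h_i(0), h_i'(0^+)$ are arbitrarily small, so $y_i$ tends to zero uniformly on this interval and $h_i$ does likewise. To propagate the estimate to an arbitrary compact interval $[0,T]$, I would iterate the argument starting slightly before $\pi/(2\sqrt{A})$ at a time where $h_i$ is still small, using that at any bouncing time $t_b$ the billiard reflection preserves $|h_i'(t_b)|$ and that $h_i(t_b)=0$, so the re-initialised data remain of the same small order.

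The principal technical obstacle I anticipate is the careful interpretation of the Alexander--Bishop inequality across the corners of $h_i$ at bounce times, where $h_i$ is merely Lipschitz: the inequality must be used in its barrier or distributional form, and the Sturm comparison has to be applied so as to accommodate this non-smoothness and to propagate uniformly across successive bounces on a fixed compact interval. Once this propagation is in place, the uniform convergence $h_i\to 0$ on $[0,T]$, hence the local uniform convergence of $\gamma_i$ to $\partial K$, is immediate.
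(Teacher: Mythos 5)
Your overall strategy---converting the Alexander--Bishop extrinsic curvature bound into a differential inequality for $h_i(t)=\mathrm{dist}_{\partial K}(\gamma_i(t))$ and iterating a comparison over successive bounces---is the same as the paper's, but there is a genuine gap at the step you summarise as ``the re-initialised data remain of the same small order.'' The reflection law does preserve $|h_i'|$ \emph{at} a bounce time, but what must be controlled is how the base angle at the \emph{end} of a chord compares with the base angle at its \emph{start}, i.e.\ $|h_i'(b^-)|$ versus $|h_i'(a^+)|$ along a chord $\gamma_i|_{[a,b]}$. A one-sided inequality of the form $h''+Ah\le 0$ cannot give this: it is an upper bound on $h''$, whereas bounding the exit slope $h'(b^-)=h'(a^+)+\int_a^b h''$ from below requires a \emph{lower} bound on $h''$, i.e.\ an upper bound on the curvature of $\partial K$ together with quantitative control of its modulus of continuity. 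For the same reason your barrier does not yield the needed lower bound $\ge C\alpha$ on the chord length (it only forces $h$ to return to zero \emph{before} the comparison solution does). Note also that the barrier cannot be run across a bounce: there $h'$ jumps upward, a convex kink violating $h''+Ah\le 0$ in the barrier sense, so each comparison covers a single chord and the number of iterations on $[0,T]$ is of order $T/\alpha\to\infty$.

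The estimate actually needed is $\alpha_{\mathrm{out}}\le\alpha_{\mathrm{in}}+C'\alpha_{\mathrm{in}}^2$ per chord (item (ii) in the paper's proof): only with this quadratic error does it take $\gtrsim 1/\alpha$ bounces to double the angle, during which the trajectory advances a definite distance, so the iteration closes on any fixed $[0,T]$. If the per-chord error were merely of the same order as $\alpha_{\mathrm{in}}$, the angle would reach unit size within a time bounded independently of the initial angle---exactly the failure mode of Halpern's $\mathcal{C}^{2,\alpha}$ examples. The paper obtains the quadratic estimate in Lemma \ref{lem:convex_function}, (ii), by comparing $g=(R-\mathrm{dist}_{\partial K})^2\circ\gamma$ with its osculating parabola; this is precisely where the $\mathcal{C}^{2,1}$ hypothesis (Lipschitz continuity of $g''$) enters. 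Your argument uses no regularity beyond Lemma \ref{lem:extrinsic_curvature} and would apply verbatim to a $\mathcal{C}^{2}$ boundary of positive curvature, for which the statement is false. As a secondary point, $h''+Ah\le 0$ is not literally what \cite{AB10} provides (their Theorem~1.8 gives $\bigl((R-\mathrm{dist}_{\partial K})^2\bigr)''\ge 2$ along geodesics, and your version can fail for chords deep inside $K$, e.g.\ in a disk of large radius), though in the near-boundary regime this reformulation is harmless.
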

In particular, no quasigeodesic in the boundary can leave the boundary, and every billiard trajectory in the interior can be extended for all times without experiencing infinitely many bounces in finite time. 
\begin{proof} We need to show that a billiard trajectory restricted to any compact domain stays in arbitrarily small neighborhoods of the boundary, if its initial direction is sufficiently close to a tangent direction of the boundary. We call the angle between the tangent space of the boundary and the foward velocity at a bounce point the base angle. To prove what we need it is sufficient to observe that there exist constants $C,C',C''>0$ that only depend on $K$ such that the following holds:
\begin{compactenum}
\item If a billiard trajectory bounces with base angle at least $\alpha$, then the distance to the next bounce point is at least $C\alpha$.
\item If a billiard trajectory bounces with a sufficiently small base angle $\alpha$, then the next base angle is bounded from above by $\alpha+C' \alpha^2$.
\item If a billiard trajectory restricted to some compact interval bounces and all base angles are bounded by $\alpha$, then this trajectory stays in a $C'' \alpha^2$ neighborhood of the boundary of $K$.
\end{compactenum}
Indeed, by (ii) it requires at least $\frac{1}{4C'\alpha}$ bounces to increase the base angle from a sufficiently small base angle $\alpha$ to base angle $2\alpha$. After these bounces the trajectory has travelled at least a distance $\frac{C}{4C'}$ within a small neighborhood of the boundary. By choosing the initial base angle sufficiently small we can thus guarantee that the trajectory stays in an arbitrarily small neighborhood of the boundary for arbitrary long time.

To prove (i)-(iii) we apply a result by Alexander and Bishop on the convexity of the distance function to the boundary in Alexandrov spaces \cite{AB10}. More precisely, Lemma \ref{lem:extrinsic_curvature} allows us to apply Theorem~1.8 from \cite{AB10} which says that there exists some $R>0$ with $R>\mathrm{dist}_{\partial K} (p)$ for all $p \in K$ such that the function $f(p)= \left( R - \mathrm{dist}_{\partial K} (p)\right)^2$ satisfies $f'' \geq 2$ along geodesics in $K$. Note that for a sufficiently small neighborhood $U$ of ${\partial K}$ the distance function $\mathrm{dist}_{\partial K}$ is of class $\mathcal{C}^{2,1}$ on $U\cap K$, see (proof of) \cite[Lemma~14.16]{GT01}, and hence so is $f$. For a chord $\gamma :[a,b] \rightarrow K$ the base angles $\alpha$ at $\gamma(a)$ and $\beta$ at $\gamma(b)$ satisfy $(f\circ \gamma)'(a) = -R \sin(\alpha)$ and $(f\circ \gamma)'(b) = R \sin(\beta)$ as the differential of the normal exponential map at the zero section is the identity. At this point the claim is implied by the subsequent lemma since the $\mathcal{C}^{2,1}$-norms of $g:=f\circ \gamma$ can be bounded independently of $\gamma$.
\end{proof}

\begin{lem}\label{lem:convex_function} Let $g:[a,b] \rightarrow \R$ be a $\mathcal{C}^{2,1}$-function which satisfies $g(a)=R=g(b)$ for some $R>0$, $g(x)<R$ and $g''(x) \geq \delta$ for all $x\in(a,b)$ and some $\delta >0$. Then the following properties hold if $|g'(a)|$ is sufficiently small compared to $\delta$.
\begin{compactenum}
\item $C'|g'(a)| <|b-a|< C |g'(a)|$ for some $C,C'>0$. 
\item $|g'(a)+g'(b)| < C |g'(a)|^2$ for some $C>0$.
\item $|R-g(x)| < C|g'(a)|^2$ for some $C>0$ and all $x \in [a,b]$.
\end{compactenum}
Moreover, the constants $C$ and $C'$ only depend on $R$, $\delta$ and the $\mathcal{C}^{2,1}$-norm of $g$.
\end{lem}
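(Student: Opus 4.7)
The plan is to reduce every claim to Taylor expansions of $g$ around $a$, exploiting the fact that the $\mathcal{C}^{2,1}$ hypothesis provides both a pointwise upper bound $M$ and a Lipschitz constant $L$ on $g''$, both controlled by the $\mathcal{C}^{2,1}$-norm of $g$. Write $h := b-a$. A preliminary observation is that the hypotheses force $g'(a) < 0$: indeed, if $g'(a)$ were zero, strict convexity ($g'' \geq \delta$) together with $g(a) = R$ would push $g$ above $R$ just to the right of $a$, contradicting $g(x) < R$ on $(a,b)$.

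For (i), I would apply Taylor with Lagrange remainder to the identity $g(b) - g(a) = 0$: this gives $0 = g'(a) h + \tfrac{1}{2} g''(\xi) h^2$ for some $\xi \in (a,b)$, so
\begin{equation*}
    h = \frac{-2\, g'(a)}{g''(\xi)}.
\end{equation*}
The uniform bounds $\delta \leq g''(\xi) \leq M$ then sandwich $h$ between positive multiples of $|g'(a)|$ depending only on $\delta$ and $M$, which is (i).

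For (ii), I would sharpen this by writing $g''(t) = g''(a) + (g''(t) - g''(a))$ inside the integral remainder, so that $L$ controls the error. This produces
\begin{equation*}
    0 = g'(a) h + \tfrac{1}{2} g''(a) h^2 + E_1, \qquad g'(b) - g'(a) = g''(a) h + E_2,
\end{equation*}
with $|E_1| \leq L h^3 / 6$ and $|E_2| \leq L h^2 / 2$. Solving the first equation for $g''(a) h$ and substituting into the second yields $g'(a) + g'(b) = E_2 - 2 E_1 / h$, an expression of size $O(L h^2)$, which by (i) is $O(L |g'(a)|^2 / \delta^2)$. For (iii), for any $x \in [a,b]$ I would expand $g(x) = g(a) + g'(a)(x-a) + \tfrac{1}{2} g''(\eta)(x-a)^2$ with $\eta \in (a,x)$; since $g''(\eta) > 0$, this immediately gives
\begin{equation*}
    R - g(x) = |g'(a)|(x-a) - \tfrac{1}{2} g''(\eta)(x-a)^2 \leq |g'(a)|\, h,
\end{equation*}
and one more application of (i) converts this into the desired bound $2|g'(a)|^2/\delta$.

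None of these steps is technically deep, and I expect the main obstacle to be essentially bookkeeping: verifying that the constants $C$ and $C'$ depend only on $R$, $\delta$ and the $\mathcal{C}^{2,1}$-norm of $g$, and checking that the smallness assumption on $|g'(a)|$ is strong enough for the inversion in (ii) to be legitimate — specifically, small enough that $h$ stays within the range where the Lipschitz estimate for $g''$ is uniform and the cubic error $E_1$ genuinely enters only at order $|g'(a)|^2$ after division by $h$. The role of $R$ in the statement is somewhat ornamental: it enters only through an a priori upper bound on $h$ that is anyway implied by the smallness of $|g'(a)|$.
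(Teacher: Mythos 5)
Your proof is correct, and for part (ii) it takes a genuinely different route from the paper. The paper proves (i) and (iii) by tracking the monotonicity of $g'$ forced by $g''\geq\delta$ (locating the root of $g'$, then bounding $g$ from below by $R-|g'(a)|^2/\delta$), and proves (ii) by comparing $g$ with its osculating parabola $p$ at $a$: using the symmetry $p'(a)=-p'(\hat b)$ at the second root $\hat b$ of $p-R$, it bounds $|\hat b-b|$ by solving a quadratic (this is where the smallness of $|g'(a)|$ is genuinely used) and then assembles (ii) from a triangle inequality. Your version replaces all of this with Taylor identities at $a$: the Lagrange form $0=g'(a)h+\tfrac12 g''(\xi)h^2$ gives both bounds in (i) at once, and for (ii) you eliminate $g''(a)h$ algebraically between the two integral-remainder identities to get $g'(a)+g'(b)=E_2-2E_1/h$ with $|E_2|,|2E_1/h|=O(Lh^2)$, which is $O(|g'(a)|^2)$ by (i). This avoids the auxiliary polynomial and the root estimate entirely, and — as you essentially observe — does not actually need the smallness hypothesis, since the only division is by $h>0$, which is guaranteed by your (correct) preliminary remark that $g'(a)<0$. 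What the paper's argument buys in exchange is the geometric picture behind (ii): a parabola enters and exits a horizontal line with equal and opposite slopes, so the defect $|g'(a)+g'(b)|$ measures exactly the deviation of $g$ from its osculating parabola, which the Lipschitz bound on $g''$ controls at third order. Your constant-tracking is consistent with the claim that $C,C'$ depend only on $\delta$ and the $\mathcal{C}^{2,1}$-norm (with $R$ indeed playing no role), so the proposal stands as a complete proof.
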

\begin{proof} In the following $C$ and $C'$ always denote some constant that only depends on $R$, $\delta$ and the $\mathcal{C}^{2,1}$-norm of $g$. Since $g(x)$ has a minimum on $[a,b]$ and consequently $g'(x)$ a root, the lower bound in $(i)$ follows from Lipschitz continuity of $g'$. The first derivative $g'$ is strictly monotonically increasing on $[a,b]$, positive on $[a,b]\cap (a+|g'(a)|/\delta, \infty)$ and larger than $|g'(a)|$ on $[a,b]\cap (a+2|g'(a)|/\delta, \infty)$, because of $g'' \geq \delta$. The first two properties imply that $g$ is bounded from below by $R-|g'(a)|^2/\delta$, which proves $(iii)$. The lower bound on $g'(x)$ for $x>a+2|g'(a)|/\delta$ then proves the upper bound in $(i)$. 

To show $(ii)$ we compare $g$ with the degree $2$ polynomial $p$ determined by $p(a)=g(a)=R$, $p'(a)=g'(a)$ and $p''(a)=g''(a)$. Let $\hat b$ be the larger root of $p-R$. Because of $p'(a)=-p'(\hat b)$ we have 
\begin{equation} \label{eq:tri_split}
		|g'(a)+g'(b)|=|p'(\hat b)-g'(b)|\leq |p'(\hat b)-p'(b)| + |p'(b)-g'(b)|.
\end{equation}
For $x\in [a,b]$ we can estimate the difference $h=p-g$ as follows: $|h''(x)|\leq C (x-a)$ and so
\[
		|h'(x)| \leq \int_a^x |h''(x)| dx \leq C (x-a)^2, \, |h(x)| \leq \int_a^x |h'(x)| dx \leq C (x-a)^3.
\]
The bound on $|h'(b)|$ together with $(i)$ yields $|p'(b)-g'(b)|<C |g'(a)|^2$. The bound on $|h(b)|$ together with $(i)$ yields $|p(b)-R|< C |g'(a)|^3$. Solving a quadratic equation now implies that $|\hat b-b| < C|g'(a)|^2$ if $|g'(a)|$ is sufficiently small compared to $\delta$. Hence, $|p'(\hat b)-p'(b)|<C |g'(a)|^2$ by Lipschitz continuity of $p$. Combining these implications with (\ref{eq:tri_split}) finally shows $(iii)$.
\end{proof}
For the proof in the higher dimensional case we moreover need to show that limits of billiard trajectories in the boundary are quasigeodesics of the boundary.

\begin{lem}\label{lem:quasigeodesic_boundary} Let $K$ be a convex body in $\R^n$ whose boundary $M$ is of class $\mathcal{C}^{2,1}$. Then a quasigeodesic of $K$ which is contained in $M$ is also quasigeodesic of $M$ (with respect to its intrinsic metric).
\end{lem}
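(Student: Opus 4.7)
The plan is to show that $\gamma$ is in fact a geodesic of $M$; since $M$ is a $\mathcal{C}^{2,1}$ Riemannian manifold, Corollary~\ref{cor:quasigeodesic_Riemannian} then identifies this with being a quasigeodesic of $M$.

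First I would establish that $\gamma$ is of class $\mathcal{C}^{1}$ with $\dot\gamma(t)\in T_{\gamma(t)}M$ everywhere. Since $\gamma$ takes values in $M$, the one-sided derivatives $\gamma^{\pm}(t)$ automatically lie in the tangent hyperplane $T_{\gamma(t)}M$, which is the bounding hyperplane of the closed half-space $T_{\gamma(t)}K$. Polarity of $\gamma^{+}(t)$ and $\gamma^{-}(t)$ in $T_{\gamma(t)}K$ in the sense of Lemma~\ref{lem:polar_characterization}(i), tested against both $w$ and $-w$ for arbitrary $w\in T_{\gamma(t)}M$, then forces $\gamma^{-}(t)=-\gamma^{+}(t)$. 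Combined with the standard right-continuity of $\gamma^{+}$ and left-continuity of $\gamma^{-}$ for quasigeodesics (Section~\ref{sub:quasigeodesics}), the vector field $\dot\gamma:=\gamma^{+}$ is continuous.

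Next I would show that the distributional acceleration of $\gamma$ in $\R^{n}$ has vanishing tangential component. The input is that, for every $q\in\R^{n}$, the function $f_{q}(t):=\tfrac12|q-\gamma(t)|^{2}$ satisfies $f_{q}''(t)\leq 1$ in the distributional sense along the quasigeodesic $\gamma$ in the $\mathrm{CBB}(0)$ space $K$; this is because $\tfrac12|q-\cdot|^{2}$ has Hessian equal to the identity on $\R^{n}$, hence is $1$-semi-concave along all geodesics of $K$, and the bound extends to quasigeodesics. Rewriting $f_{q}''$ formally as $1+\langle\gamma(t)-q,\ddot\gamma(t)\rangle$, the inequality becomes $\langle q-\gamma(t),\ddot\gamma\rangle\geq 0$ as a distribution on $I$. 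Fix $t_{0}$ and $u\in T_{\gamma(t_{0})}M$, and apply this with $q=q_{r}:=\gamma(t_{0})+ru$ for small $r>0$, integrating against a bump function of width $\varepsilon\ll r$ around $t_{0}$: the factor $q_{r}-\gamma(t)=ru+(\gamma(t_{0})-\gamma(t))$ is approximately $ru$, and varying $u$ over $\pm u$ and letting $\varepsilon/r\to 0$ yields that the tangential projection $(\ddot\gamma)^{T}$ vanishes as a distribution on $I$.

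Finally, by the Taylor expansion (\ref{eq:exp_taylor}) of Lemma~\ref{thm:reg_exp_map} applied to $\exp^{M}$, the normal component of the acceleration along any $\mathcal{C}^{1}$-curve tangent to $M$ is pinned down as $(\ddot\gamma)^{N}=\Pi_{\gamma}(\dot\gamma,\dot\gamma)$, which is Lipschitz in $t$. Together with $(\ddot\gamma)^{T}=0$ from the previous step, this identifies $\gamma$ as a strong solution of the geodesic ODE on $M$, hence as a geodesic of $M$. The main obstacle is the distributional argument of the preceding paragraph: I expect the $\mathcal{C}^{2,1}$ hypothesis on $M$ to be essential, as it provides a Lipschitz splitting of vectors along $\gamma$ into tangential and normal parts that interacts cleanly with the distributional derivative.
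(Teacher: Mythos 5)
Your proposal is correct in outline but takes a genuinely different route from the paper. The paper stays entirely within the synthetic characterization (\ref{eq:quasigeodesic_condition}): it writes nearby points of $M$ via the exponential map, bounds the intrinsic distance from above by extrinsic quantities using Toponogov's theorem, and transfers the inequality $\tfrac12(d_q^2\circ\gamma)''\leq 1+o(d(q,\gamma(t)))$ from the extrinsic to the intrinsic metric by a chain of Cauchy--Schwarz estimates; no regularity of $\gamma$ beyond arclength parametrization is ever established. You instead upgrade the regularity of $\gamma$ itself --- no bouncing, $\dot\gamma$ continuous, vanishing tangential distributional acceleration, hence $\gamma$ a $\mathcal{C}^{1,1}$ solution of the geodesic equation of $M$ --- and conclude via the fact that local geodesics are quasigeodesics. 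Your route yields a stronger and more classical conclusion (the curve is an honest geodesic of $M$) and explains geometrically why it cannot bounce, at the price of more low-regularity ODE and measure-theoretic analysis.

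Three steps need more care than your sketch suggests, though none is fatal. First, the continuity of $\dot\gamma$ is not simply ``standard''; it follows because $\dot\gamma$ is of bounded variation (apply $(f_q\circ\gamma)''\leq 1$ to finitely many $q$ spanning $\R^n$, so each coordinate of $\dot\gamma$ is a difference of monotone functions) and its one-sided limits coincide with $\gamma^+(t)$ and $-\gamma^-(t)$, which your polarity argument shows are equal; this BV structure is also what makes $\ddot\gamma$ a vector-valued measure in the first place. Second, the inequality $(f_q\circ\gamma)''\leq 1$ for $q=\gamma(t_0)+ru\notin K$ does not follow from (\ref{eq:quasigeodesic_condition}), which is stated only for $q$ in the space; you need the characterization of quasigeodesics in terms of general $\lambda$-concave functions from \cite{Pe07}, applied to the $1$-concave function $\tfrac12|q-\cdot|^2$ restricted to $K$ (which is $1$-affine along the straight geodesics of $K$). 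Third, the normal component of the acceleration of a merely $\mathcal{C}^1$ curve in $M$ is not controlled by the Taylor expansion of $\exp^M$, which describes geodesics of $M$ and would be circular here; instead differentiate the identity $\langle\dot\gamma,\nu\circ\gamma\rangle=0$ with the Lipschitz unit normal $\nu$ to see that $\ddot\gamma=(\ddot\gamma)^N$ is absolutely continuous with density given by the second fundamental form, whence $\dot\gamma$ is Lipschitz and the geodesic equation holds almost everywhere. With these repairs the argument goes through.
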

\begin{proof} Let $\gamma$ be a quasigeodesic of $K$ that is contained in $M$. By \cite[Proposition~2.3.12]{MR1835418} it is also parametrized by arclength with respect to the intrinsic metric of $M$. Now we want to apply the following characterization.

By \cite[Proposition~1.7]{QG95} a curve $\gamma:[a,b] \rightarrow X$ parametrized by arclength in an Alexandrov space $(X,d)$ (with curvature $\geq \kappa$) is a ($\kappa$-)quasigeodesic if and only if for every $t\in (a,b)$
\begin{equation}\label{eq:quasigeodesic_condition}
				\frac{1}{2}(d_q^2\circ \gamma)''(t) \leq 1+ o\left(d(q,\gamma(t))\right).
\end{equation}
Here a continuous function $\phi$ on $(a,b)$ is said to satisfy $\phi''\leq B$ if $\phi(t+\tau)\leq \phi(t)+ A \tau + B\tau^2/2+o(\tau^2)$ for some constant $A \in \R$ (that depends on $q$).

We denote the extrinsic and the intrinsic distance function on $M$ by $d$ and $d_i$, respectively. Locally around a point $p=\gamma(t)$ we write $\gamma$ as the image of a curve $\bar \gamma$ in $T_pM$ under the exponential map of $M$. More precisely, we choose the parametrization of $\bar \gamma$ such that $\gamma(t+\tau)=\exp_p(\bar \gamma (\tau))$. Moreover, we write a point $q\neq p$ close to $p$ as $q=\exp_p(su)$ for some unit vector $u\in T_pM$. By Lemma \ref{thm:reg_exp_map} we have Taylor expansions
\begin{equation*}
\begin{split}
\exp_p(su) &=p+su+\frac{1}{2} \Pi(u,u)s^2 + o(s^2), \\
\exp_p(\bar \gamma (\tau)) &=p+\bar \gamma (\tau)+\frac{1}{2} \Pi(\bar \gamma (\tau),\bar \gamma (\tau)) + o(\left\|\bar \gamma (\tau)\right\|^2). \\
\end{split}
\end{equation*}
Consider the orthogonal projection $P: M \To T_pM$. We define $\phi: T_pM \To T_pM$ as $\phi(v)=P(\exp_p(v)-p)$ and write $su=\phi(s'u')$ for some unit vector $u'\in T_pM$ and some $s'> 0$. On a sufficiently small neighborhood $V$ of the origin in $T_pM$ the map $\phi$ defines a homeomorphism onto its image. Moreover, the Taylor expansion above implies that 
\[
\frac 1 2 \left\|v\right\| \leq \left\|\phi(v)\right\| \leq 2 \left\|v\right\| 
\]
for all $v\in V$ if $V$ is sufficiently small. In other words, we have $\frac s 2 \leq s' \leq 2 s$ and so $o(s)=o(s')$.

Since $M$ is nonnegatively curved, Toponogov's theorem \cite{BGP92,MR1835418} implies
\[
  f(\tau):= \frac 1 2 d_i^2(\exp_p(su), \exp_p(\bar \gamma (\tau))) \leq \left\| su-\bar \gamma(\tau) \right\|^2.
\]
Moreover, $f(0)=\left\|u\right\|^2s^2$. In order to show that $\gamma$ satisfies condition (\ref{eq:quasigeodesic_condition}) with respect to the intrinsic metric of $M$ we need to estimate $f(\tau)-f(0)$ from above:
\begin{equation*}
\begin{split}
2(f(\tau)-f(0)) &\leq \left\| su-\bar \gamma(\tau) \right\|^2 - \left\|u\right\|^2s^2 \\
&=  - 2 \left\langle \phi(s'u'), \bar\gamma(\tau)\right\rangle + \left\|\bar\gamma(\tau) \right\|^2\\
&\leq -  2 \left\langle \exp_p(s'u')-p , \bar\gamma(\tau)\right\rangle +  \left\|\bar\gamma(\tau)+\frac 1 2 \Pi(\bar \gamma (\tau),\bar \gamma (\tau)) \right\|^2 \\
&\leq  - 2 \left\langle \exp_p(s'u')-p , \exp_p(\bar \gamma(\tau)) - p -\frac 1 2 \Pi(\bar \gamma (\tau),\bar \gamma (\tau)) \right\rangle  \\
	&  \quad + \left\|\exp_p(\bar \gamma(\tau)) - p \right\|^2 +o(\tau^2) \\
 &= \left\| (\exp_p(s'u')-p) - (\exp_p(\bar \gamma(\tau)) - p)  \right\|^2 - \left\| \exp_p(s'u')-p \right\|^2 \\
&  \quad  + 2 \left\langle \exp_p(s'u')-p- s'u' , \frac 1 2 \Pi(\bar \gamma (\tau),\bar \gamma (\tau)) \right\rangle+o(\tau^2) \\
&\leq d\left(\exp_p(\bar \gamma(\tau)),\exp_p(s'u')\right)^2 - d(p,\exp_p(s'u'))^2 + o(s')\tau^2 +   o(\tau^2),  \\
\end{split}
\end{equation*}
where we have used the Cauchy--Schwarz inequality several times to estimate terms. In the last step we have also used $\left\|\bar \gamma (\tau)\right\| \leq \tau$ so that $\left\|\Pi(\bar \gamma (\tau),\bar \gamma (\tau))\right\|<C \tau^2$ for some constant $C>0$.

Since (\ref{eq:quasigeodesic_condition}) is satisfied with respect to the extrinsic metric by assumption and since we have $o(d(p,\exp_p(s'u'))) =o(s')=o(s)= o(d_i(p,q))$, the above estimate shows that (\ref{eq:quasigeodesic_condition}) is also satisfied with respect to the intrinsic metric.\end{proof}

Now we can complete the proof of Theorem \ref{thm:convex_smooth}.

\begin{proof}[Proof of Theorem \ref{thm:convex_smooth}] By Lemma \ref{lem:convergence_initial_directions}, Lemma \ref{sub:proof_thm_B} and Lemma \ref{lem:quasigeodesic_boundary} there is a uniquely defined quasigeodesic flow on $K$ which is hence continuous by Lemma \ref{lem:unique_implies_continuity}. By Lemma \ref{cor:convergence_sub_alex} also the induced billiard evolution on $K$ is continuous with respect to the subspace topology of $TK$. The statement about the locally uniform convergence follows from the fact that our billiard trajectories are parametrized proportionally to arclength.
\end{proof}

The following construction provides an example of a convex body with a continuous quasigeodesic flow although the boundary is only $\mathcal{C}^1$.

\begin{exl}\label{exe:c1_example} Attach two circular arcs of different radii and close the boundary smoothly. Then no quasigeodesic in the boundary can leave the boundary and so the uniquely defined quasigeodesic flow is continuous by Lemma \ref{lem:unique_implies_continuity}.
\end{exl}

\subsection{Optimality of Theorem \ref{thm:convex_smooth} in dimension $2$} \label{sub:optimality}

The following example shows that the statement of Theorem \ref{thm:convex_smooth} may fail if the boundary is only three times differentiable and positively curved everywhere.

\begin{prp}\label{prp_noncontinuous_example} There exists a convex body $K \subset \R^2$ whose boundary is three times differentiable and positively curved everywhere, but which does not admit a continuous quasigeodesic flow resp. billiard evolution.
\end{prp}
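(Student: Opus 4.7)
The plan is to exhibit a Halpern-type convex body $K\subset\R^2$ following \cite{Hal77}, whose boundary is three times differentiable and strictly convex, but along which a billiard trajectory can accumulate infinitely many bounces in finite time --- an occurrence which, as the proof of Theorem \ref{thm:convex_smooth} shows via Lemma \ref{lem:convex_function}\,(ii), is excluded when the boundary is $\mathcal{C}^{2,1}$.

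Concretely, I would prescribe $\partial K$ by its curvature function $\kappa$ as a function of arclength $s\in \R/L\Z$, taking $\kappa$ smooth and strictly positive away from a distinguished point $p_\infty\in\partial K$ (parametrized by $s=0$), and, on a small neighborhood of $0$, of the form $\kappa(s)=\kappa_0+c|s|^{\beta}$ with an exponent $\beta\in(2,3)$. Then $\kappa$ is pointwise three times differentiable at $s=0$ but $\kappa'$ fails to be Lipschitz there, so the boundary curve obtained by integrating the Frenet equations is three times differentiable yet not of class $\mathcal{C}^{2,1}$. The exponent and constants are chosen so that $\kappa$ globally extends to a strictly positive closed curvature function of total turning $2\pi$, yielding a genuine convex body.

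Next, I would track the evolution of the base angle $\alpha_n$ of a billiard trajectory at its consecutive bounce points with arclength coordinates $s_n$, starting from a small base angle $\alpha_0$ at a point close to but not at $p_\infty$ with the trajectory directed toward it. A Taylor expansion of the boundary combined with the reflection law gives, after elementary manipulation, a recursion of the shape
\[
\alpha_{n+1}-\alpha_n = \kappa(s_n)\,(s_{n+1}-s_n) + O\!\left(\alpha_n^{2}\right),\qquad s_{n+1}-s_n = \frac{2\alpha_n}{\kappa(s_n)}+O(\alpha_n^{2}),
\]
so that the base angle grows subquadratically (violating the bound provided by Lemma \ref{lem:convex_function}\,(ii) in the $\mathcal{C}^{2,1}$ regime) while the chord lengths shrink summably. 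Choosing $\beta$ suitably and tuning the direction of approach, one arranges for the trajectory to spiral toward $p_\infty$ and accumulate bounce times at a finite time $T<\infty$.

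Finally, I would produce the failure of continuity by exhibiting two sequences of initial conditions with base angles tending to zero at a common interior-approach point, whose corresponding billiard trajectories evaluated at time $T$ converge to two distinct points of $\partial K$, neither of which equals the value at time $T$ of the tangent-direction geodesic running along $\partial K$. Since the tangent direction at that point is the Alexandrov-sense limit of the initial directions of both sequences (in the sense of Lemma \ref{lem:convergence_initial_directions}), any purported continuous quasigeodesic flow would be forced to take incompatible values at time $T$, contradicting continuity. The main obstacle, and the heart of Halpern's original analysis, is the careful verification that the recursion above actually yields an accumulation of bounces in finite time for sub-$\mathcal{C}^{2,1}$ curvature profiles --- there seems to be no soft route bypassing this explicit estimate.
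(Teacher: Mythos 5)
Your overall strategy---take a Halpern-type table on which some billiard trajectory accumulates infinitely many bounces in finite time, then derive a contradiction with continuity---is the same as the paper's, which simply cites \cite{Hal77} for the example. But your attempt to rebuild the example defeats itself. With $\kappa(s)=\kappa_0+c|s|^{\beta}$ and $\beta\in(2,3)$ the curvature is of class $\mathcal{C}^{2,\beta-2}$, in particular Lipschitz, so the boundary curve is of class $\mathcal{C}^{2,1}$ (indeed $\mathcal{C}^{3,\beta-2}$), and Theorem \ref{thm:convex_smooth} then says this table \emph{does} admit a continuous billiard evolution. Since (for an arclength parametrization) three times differentiable is equivalent to $\kappa$ being differentiable while $\mathcal{C}^{2,1}$ is equivalent to $\kappa$ being Lipschitz, what you need is a curvature function that is differentiable everywhere but has unbounded derivative near the accumulation point; no power law achieves this, and Halpern's construction is genuinely more delicate (the curve and the trajectory are built together). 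Your base-angle recursion is also wrong as stated: combining your two displayed relations gives $\alpha_{n+1}-\alpha_n\approx 2\alpha_n$, i.e.\ exponential growth of the base angle, which already fails for the circle where $\alpha_n$ is constant; the correct leading correction involves $\kappa'$, and for accumulation in finite time one needs the $\alpha_n$ to \emph{decrease} summably (so that $\sum (s_{n+1}-s_n)<\infty$), not to ``grow subquadratically''. You then explicitly defer the verification that bounces accumulate, which is precisely the hard content.

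The concluding step also differs from the paper's and is unsubstantiated as written. The paper does not compare two forward-time families evaluated at a common time $T$; it runs time backwards from the accumulation point: the backward directions $c^-(t_0-1/n)$ along Halpern's trajectory and $\gamma^-(T-1/n)$ along the boundary parametrization converge in the Alexandrov sense to the \emph{same} tangent vector $-v$, yet the quasigeodesics they determine converge to two different limits (the reversed bouncing trajectory versus the reversed boundary curve), contradicting continuity. This argument leans on the specific feature of Halpern's example that $-v$ is the \emph{only} bifurcating direction, which in turn comes from the boundary being sufficiently regular away from the accumulation point. Your version would require proving that suitably perturbed forward trajectories at time $T$ converge to two distinct points neither of which lies on the boundary continuation; nothing you set up yields that, and it is not how the discontinuity in Halpern's example manifests.
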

\begin{proof} An example of a convex billiard table $K$ whose boundary is three times differentiable and positively curved everywhere with a unit speed billiard trajectory $c:[0,t_0]\To X$ whose bounce times accumulate at $t_0$ (for the first time) is constructed in \cite{Hal77}. The example constructed in \cite{Hal77} is such that the only possible extension of $\tilde c$ to a quasigeodesic $\tilde c:[0,\infty) \To X$ is the one which parametrizes the boundary after time $t_0$.

Suppose that the convex body $K$ from this construction admits a continuous quasigeodesic flow $\Phi: [0,\infty)\times TK \To TK$. Then the curve $[0,t_0] \ni t\mapsto \phi(t,c^+(0))$ projects to $c$, because the behaviour of $c$ on $[0,t_0]$ is uniquely determined by the initial condition $c^+(0)$. At time $t_0$ the velocity of $c$ converges to a tangent vector $v$ of the boundary of $K$ \cite[Theorem~1(a)]{Hal77}. Let $\gamma :\R \rightarrow K$ be a unit speed parametrization of the boundary of $K$ with $\gamma^+(0)=v$ and period $T$. The example constructed in \cite{Hal77} is such that $-v$ is the only direction in which two distinct quasigeodesics start. This is because the boundary has three bounded derivatives in the complement of $\gamma((-\varepsilon,0))$ for any $\varepsilon>0$, see the proof of Lemma \ref{lem:convergence_to_boundary} and of Theorem 3 in \cite{Hal77}. Therefore, the two sequences of initial directions $c^-(t_0-1-n)$ and $\gamma^-(T-1/n)$ have the same limit direction, but the quasigeodesics defined by these initial directions do not converge pointwise in contradiction to our continuity assumption.
\end{proof}

In the proof we used that for the convex body constructed in \cite{Hal77} there is only one tangent vector of the boundary in which two quasigeodesics $\gamma_i:[0,a) \To K$, $i=1,2$, start that are distinct on any nontrivial time interval $[0,\varepsilon)$. A negative answer to the following question would in particular imply that continuous quasigeodesic flows on convex bodies are unique, see Section \ref{sub:further_properties}.

\begin{qst} Does there exist a convex body with a continuous quasigeodesic flow on which a quasigeodesic can escape the boundary to the interior?
\end{qst}

\subsection{Continuous billiard flows on tables with non-smooth boundary}\label{sec:continuous_nonsmooth_bound}

Now we illustrate in dimension $2$ that the presence of continuous quasigeodesic flow on a convex body implies some local rigidity.

\begin{prp}\label{prp:local_rigidity_dim2} Let $K\subset \R^2$ be a convex body that admits a continuous quasigeodesic flow. Then each tangent cone of $K$ does so. In particular, all tangent cones are orbifolds. 
\end{prp}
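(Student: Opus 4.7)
The plan is to produce a continuous quasigeodesic flow on the tangent cone $T_pK$ as a blow-up limit of the given flow on $K$, and then to invoke the two-dimensional corner analysis from Lemma \ref{lem:dim2_billiard} in order to classify $T_pK$ as an orbifold. For each point $p\in K$ I would consider the pointed rescalings $K_\lambda:=\lambda(K-p)$ as $\lambda\to\infty$; these converge in the pointed Gromov--Hausdorff sense to $(T_pK,0)$. Since rescaling of $\R^2$ commutes with the quasigeodesic structure, each $K_\lambda$ inherits a continuous quasigeodesic flow $\Phi_\lambda$ from the given flow $\Phi$ by pushing forward via $x\mapsto \lambda(x-p)$ and rescaling time by $\lambda$.

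Next, given initial data $(q,v)\in T(T_pK)$, I would choose an approximating sequence $(q_\lambda,v_\lambda)\in TK_\lambda$ converging to $(q,v)$ in the Alexandrov sense; this is straightforward because $T_pK$ is a flat convex cone in $\R^2$, so its own tangent cones and convergence notions are transparent. The orbits $c_\lambda(t):=\pi(\Phi_\lambda(q_\lambda,v_\lambda,t))$ are unit-speed curves, hence $1$-Lipschitz, so by Arzel\`a--Ascoli a subsequence converges locally uniformly to a curve $c_\infty:\R\to T_pK$. Since pointwise limits of quasigeodesics are again quasigeodesics (Section \ref{sub:quasigeodesics}), $c_\infty$ is a quasigeodesic with $c_\infty(0)=q$ and $c_\infty^+(0)=v$. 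I would then define $\Phi_p$ on $TT_pK$ by declaring its orbit through $(q,v)$ to be $c_\infty$.

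For this to yield a well-defined flow I must show the limit is independent of the approximating sequence and of the chosen subsequence, and that the resulting flow is continuous in the Alexandrov sense. Suppose two approximating sequences produced distinct limit orbits with the same initial datum. Rescaling back to $K$ gives two sequences of orbits of $\Phi$ whose initial conditions converge in the Alexandrov sense to the same vector of $TK$ (lying in the tangent cone factor corresponding to $p$), and whose pointwise limits disagree; this contradicts continuity of $\Phi$ after an appropriate reparametrization of time. An analogous argument, carried out at varying basepoints $q\in T_pK$, delivers the Alexandrov-sense continuity of $\Phi_p$.

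Finally, a two-dimensional tangent cone $T_pK$ of a planar convex body is either $\R^2$, a closed half-plane, or a sector of opening angle $\alpha\in(0,\pi)$. The first two cases are trivially flat orbifolds. In the remaining case I would apply verbatim the corner analysis from the proof of Lemma \ref{lem:dim2_billiard} at the apex of the sector: approximating the angle-bisecting trajectory into the apex from above and from below and comparing the two limiting reflections via unfolding forces the relation $\beta=\alpha/2$, i.e.\ $\alpha=\pi/n$ for some $n\in\N$. The sector is then the flat orbifold $\R^2/D_n$, completing the proof. The main obstacle is the third step, ensuring that Alexandrov-sense continuity really passes to the blow-up limit; all the remaining steps are essentially bookkeeping combined with the local corner argument already carried out in Section \ref{sec:polyhedral_billiard_table}.
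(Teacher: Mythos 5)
Your overall strategy --- blow up at $p$, transport the flow to $T_pK$, and then run the corner analysis of Lemma \ref{lem:dim2_billiard} --- is in the same spirit as the paper's argument, but the step you yourself flag as the main obstacle contains a genuine gap, and the justification you give for it is circular. When you rescale your two approximating orbits back to $K$, their initial conditions sit at points $p+q_\lambda/\lambda\to p$ with directions $v_\lambda\to v$. In the only interesting case (the limit orbit in $T_pK$ runs into the apex) the direction $v$ points out of $T_pK$ when based at $p$, so $(p,v)$ is not even an element of $TK$; and in any case these initial conditions do \emph{not} converge in the Alexandrov sense to the incoming direction. By definition, Alexandrov convergence demands that every subsequential pointwise limit of the orbits of $\Phi$ have the same initial direction, and those macroscopic limits are quasigeodesics that start at $p$ and immediately leave in the \emph{exit} direction after the corner; so the existence of an Alexandrov limit of your initial data is precisely the uniqueness of the exit direction you are trying to establish. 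Moreover, the continuity hypothesis on $\Phi$ is an implication conditioned on that convergence, and even when it applies it only controls $\pi(\Phi((p_i,v_i),t_i))$ for $t_i\to t$ fixed; at the blow-up times $t/\lambda\to 0$ it merely says the orbits converge to $p$, which carries no information about the rescaled limit. ``An appropriate reparametrization of time'' does not repair this.

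The fix is the maneuver underlying the paper's proof: do not compare the orbits at their collapsing starting points, but extend them \emph{backwards} by a fixed macroscopic time $T'$. Since the reversed incoming direction points into the interior of $T_pK$, these backward extensions are straight segments ending at points that converge to a fixed interior point $x_0$ of $K$, with directions converging in the subspace topology; for interior directions this does imply Alexandrov convergence (first paragraph of the proof of Lemma \ref{cor:convergence_sub_alex}), so continuity of $\Phi$ forces all the orbits to converge pointwise to the single flow orbit through the common anchor. One must then still check that this macroscopic limit pins down the blow-up limit --- in dimension two this works because a quasigeodesic of a sector is determined by its incoming ray together with its asymptotic outgoing direction --- and this step is also absent from your write-up. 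Note finally that the paper does not build a full flow on $T_pK$ at all: it only produces the two mirror-image families needed for the $\beta=\alpha/2$ computation, anchors them at a common interior initial condition on the bisector, and blows up; your last step (the corner analysis forcing $\alpha=\pi/n$) then coincides with the paper's.
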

\begin{proof} The second claim is immediate from Lemma \ref{lem:dim2_billiard}. Let $p$ be a point on the boundary of $K$. Let $l$ be a bisector of $T_pK$ through $p$ and let $\gamma^n_1$ and $\gamma^n_2$ be two families of quasigeodesics of $K$ such that the initial conditions of $\gamma^n_1$ and $\gamma^n_2$ are mirror images of each other with respect to $l$, $\gamma^n_1(0)$ and $\gamma^n_2(0)$ have distance $1/n$ to $l$ and distance $C/n$ to $p$ for some $n$-independent $C>0$, the initial directions of $\gamma^n_1$ and $\gamma^n_2$ are parallel to $l$, and $\gamma^n_1$ and $\gamma^n_2$ initially move towards $p$. Then by our continuity assumption $\gamma^n_1$ and $\gamma^n_2$ both converge to a common limit quasigeodesic $\gamma$ with $\gamma(0)=p$. 

Blowing up $K$ at $p$ with scaling factors $n$ gives a sequence of pointed metric spaces that converge to $T_pK$ with respect to the pointed Gromov-Hausdorff convergence. The sequences $\gamma^n_1$ and $\gamma^n_2$ converge to quasigeodesics $\gamma_1$ and $\gamma_2$ on $T_pK$, cf. \cite[Section~5.1.6]{Pe07}. By construction $\gamma_1$ and $\gamma_2$ are mirror images of each other with respect to $l$, initially they both move parallel to $l$ towards $p$ and eventually, after finitely many bounces, both become parallel to $\gamma$. Now the proof of Lemma \ref{lem:dim2_billiard} and scale invariance of $T_pK$ implies that the opening angle of $T_pK$ at $p$ is of the form $\pi/m$ for some natural number $n$, i.e. that $T_pK$ is an orbifold.
\end{proof}

We remark that considering the curvature of the boundary shows that there can be at most four non-smooth orbifold points in the boundary and that the table is a rectangle if there are four, see e.g. \cite{AP18}. Moreover, such tables have continuous quasigeodesic flows if the boundary is sufficiently regular otherwise.

We expect that the statements of Proposition \ref{prp:local_rigidity_dim2} are also true in higher dimensions, see Conjecture \ref{conj:orb_point}.

\section{Boundaries of polyhedral convex bodies} \label{sec:boundary_polyhedral}

In this section we prove Theorem \ref{thm_3_4_polyhedral} which relies on the notion of a (continuous) quasi\-geodesic flow. However, to read it one can either take the next paragraph for granted or recall the definition of a quasigeodesic flow from Section \ref{sec:continuous_billiard_quasigeodesic} and the characterization of quasigeodesics in polyhedral Alexandrov spaces provided in Lemma \ref{cor:quasigeodesic_polyhedral}: A quasigeodesic is a curve parametrized by arclength which is locally length realizing except at a discrete set of times at which the forward and backward derivatives are polar. As far as continuity concerns one can alternatively demand it only at directions over points that are contained in the interior of a maximal dimensional simplex where continuity can be defined in terms of the topology of the ambient Euclidean vector space.

The only if statement of the first part of Theorem \ref{thm_3_4_polyhedral} works analogously by induction on the dimension as in the proof of Theorem \ref{thm:billiard_orbifold} in Section \ref{sec:polyhedral_billiard_table}. The statement that the orbifold geodesic flow defines a continuous quasigeodesic flow follows as in the proof of Proposition \ref{prp:polyhedral_continuous} and was also observed in general in Section \ref{sec:continuous_billiard_quasigeodesic}.

\subsection{Proof of Theorem \ref{thm_3_4_polyhedral} in dimension $3$} \label{sub:_prove_C_dim3}

For the proof of the second part of Theorem \ref{thm_3_4_polyhedral} we first deal with the $3$-dimensional statement.

\begin{prp}\label{prp:3d_tetrahedron} Suppose a polyhedral convex body $K$ in $\R^3$ is bounded by an orbifold. Then $K$ is a simplex with all four cone angles equal to $\pi$.
\end{prp}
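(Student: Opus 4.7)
The plan is to analyze the intrinsic geometry of $\partial K$ as a flat polyhedral surface and to invoke Gauss--Bonnet. Topologically $\partial K \cong S^2$ since $K$ is a convex body in $\R^3$. The intrinsic metric is flat away from the vertices of $K$: any point in the relative interior of a $2$-face has a flat disk neighborhood, and any point in the relative interior of an edge also has a flat neighborhood obtained by unfolding the two incident faces across the edge into a common plane, irrespective of the dihedral angle. Hence the only possible singular points of the orbifold structure are the vertices $v_1,\ldots,v_V$ of $K$, at each of which the intrinsic metric has a cone singularity of total angle $\theta_v\in (0,2\pi)$ equal to the sum of the face angles of $K$ at $v$. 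Since $\partial K$ is a closed surface with empty orbifold boundary, the only admissible singularities are cone points of some order $n_v$, forcing $\theta_v = 2\pi/n_v$ for an integer $n_v\geq 2$ at every vertex.

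Next I would apply the Gauss--Bonnet theorem. For a flat polyhedral sphere, the total curvature is concentrated at the vertices as atoms of size $2\pi-\theta_v$, and the identity reads
$$\sum_{v} (2\pi - \theta_v) \;=\; 2\pi\,\chi(\partial K) \;=\; 4\pi.$$
Substituting $\theta_v = 2\pi/n_v$ and dividing by $2\pi$ gives the key equation
$$\sum_{v=1}^{V} \left(1-\frac{1}{n_v}\right) \;=\; 2.$$

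The last step is a short combinatorial pigeonhole argument. Every convex $3$-polytope has at least four vertices, so $V\geq 4$; on the other hand, each integer $n_v\geq 2$ makes each summand at least $1/2$, so $V/2 \leq 2$ and hence $V\leq 4$. Therefore $V=4$, and every $n_v$ must attain its minimum value $2$, i.e.\ every cone angle equals $\pi$. A convex polyhedron with exactly four vertices is a simplex, giving the claim. The only delicate point is the justification that edges (and not just the interiors of $2$-faces) are regular points of the intrinsic metric, together with the observation that no mirror-type or corner-reflector singularities can appear on the closed surface $\partial K$; everything else is a clean consequence of Gauss--Bonnet. As an alternative one could combine Euler's formula $V-E+F=2$ with the face-angle identity $\sum_v \theta_v = 2\pi(E-F)$ to arrive at the same equation $\sum_v(1-1/n_v)=2$, and then proceed as above.
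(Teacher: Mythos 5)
Your proof is correct and follows essentially the same route as the paper's first argument: identify the vertices as the only possible singular points, use the orbifold condition to force cone angles $2\pi/n_v$ with $n_v\geq 2$ (since the cone angle at a vertex of a convex polytope is strictly less than $2\pi$), and apply Gauss--Bonnet to get $\sum_v(1-1/n_v)=2$, which together with $V\geq 4$ forces $V=4$ and all $n_v=2$. Your equation is just a rearrangement of the paper's $\sum 1/n_i = k-2$, so nothing further is needed.
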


We present two different proofs of Proposition \ref{prp:3d_tetrahedron}, one using the Gauß--Bonnet theorem and another one via Euler's polyhedral formula.

\begin{proof}[Proof of Proposition \ref{prp:3d_tetrahedron} via Gauß--Bonnet]
We assume that the reader is familiar with the curvature measure and the Gauß--Bonnet theorem for $2$-dimensional convex surfaces, see e.g. \cite{AP18,Al05}. The curvature of the boundary of a $3$-dimensional polyhedral convex body is concentrated in the vertices of the body. The curvature at a vertex equals the area of the intersection of the normal cone and a unit sphere at this vertex. However, it is determined by the intrinsic geometry of the surface alone. By the Gauß--Bonnet theorem applied to a neighborhood of the vertex, the curvature $\kappa$ and the cone angle $\alpha$ at vertex $i$ are related via $\kappa=2\pi-\alpha$. Here the cone angle at a vertex $p$ is the length of the boundary of the intersection of $T_pK$ with the unit sphere, and it equals the sum of the face angles adjacent to $p$.

We enumerate the vertices of $K$ from $1$ to $k$ and denote the curvature and the cone angle at the $i$-th vertex by $\kappa_i$ and $\alpha_i$, respectively. Applying Gauß--Bonnet to the entire surface yields that
\[
				4\pi=\sum_{i=1}^k \kappa_i=2k\pi - \sum_{i=1}^k \alpha_i.
\]
By our orbifold assumption each $\alpha_i$ is of the form $\alpha_i=\frac{2\pi}{n_i}$ for some $n_i\in \N$. Since the normal cone at a vertex has nonempty interior, the curvature at a vertex is positive and the cone angle is strictly less than $2\pi$. This implies that $n_i \geq 2$ for all $i=1\ldots, k$. Hence, we obtain the same condition that we encountered in the proof of Lemma \ref{lem:dim2_billiard}, namely
\[
			\frac{1}{n_1}+ \ldots + \frac{1}{n_k} = k-2
\]
with $n_i\in \N_{\geq2}$ for all $i=1,\ldots,k$. Since a convex body in $\R^3$ has at least four vertices, this time the only possible solution is $k=4$ and $n_1=n_2=n_3=n_4=2$. It corresponds to a simplex with all four cone angles equal to $\pi$.
\end{proof}

The formulation in Proposition \ref{prp:3d_tetrahedron} is related to the formulation in Theorem \ref{thm_3_4_polyhedral} via the following observation, cf. \cite[Section~4]{AP18} and Figure \ref{fig:_triangle}.

\begin{exe}  All cone angles of a $3$-simplex are $\pi$ if and only if opposite sides have equal length. For each acute triangle $T$ in the plane there exists precisely one such $3$-simplex all of whose faces are congruent to $T$, cf. Figure \ref{fig:_triangle}.
\end{exe}

For the alternative proof of Proposition \ref{prp:3d_tetrahedron} without the Gauß-Bonnet theorem we need the following two ingredients.

\begin{lem}\label{lem:cone_angle_bound} The cone angle at a vertex of a polyhedral convex body $K$ in $\R^3$ is less then $2\pi$.
\end{lem}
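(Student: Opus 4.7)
The plan is to identify the cone angle at $p$ with the perimeter of the spherical polygon $\Sigma := T_pK \cap S^2$ and then to bound this perimeter by using the fact that $p$ being a vertex forces $\Sigma$ to lie strictly inside an open hemisphere.

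Since $p$ is a $0$-dimensional face of $K$, the tangent cone $T_pK$ is pointed (contains no line), which is equivalent to the normal cone $N_pK$ having nonempty interior. I would choose a unit vector $N$ in the interior of $N_pK$. Then $\langle u, N\rangle < 0$ for every nonzero $u \in T_pK$, so $\Sigma$ lies in the open hemisphere $\{v \in S^2 : \langle v,N\rangle<0\}$; by compactness of $\Sigma$ there exists $\varepsilon > 0$ such that $\Sigma \subset C_\varepsilon := \{v \in S^2 : \langle v,N\rangle \leq -\varepsilon\}$. The boundary of $C_\varepsilon$ is a small circle of length $2\pi\sqrt{1-\varepsilon^2} < 2\pi$.

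For the identification step, each $2$-dimensional face of $K$ at $p$ corresponds to a planar sector of $T_pK$ whose apex angle is the face angle $\theta_i$, and the intersection of such a sector with $S^2$ is a great-circle arc of length $\theta_i$. These arcs form $\partial\Sigma$, so $\mathrm{perim}(\partial\Sigma) = \sum_i \theta_i$ is by definition the cone angle at $p$.

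The final step is the comparison of perimeters: since $\Sigma$ and $C_\varepsilon$ are both spherically convex with $\Sigma \subset C_\varepsilon$ and both lie in an open hemisphere, the monotonicity of perimeter for spherically convex subsets of a hemisphere gives $\mathrm{perim}(\partial\Sigma) \leq \mathrm{perim}(\partial C_\varepsilon) < 2\pi$, proving the lemma. The main obstacle is justifying this monotonicity in an elementary fashion compatible with the Gau{\ss}--Bonnet-free spirit of the subsequent proof of Proposition~\ref{prp:3d_tetrahedron}; the cleanest route is the Cauchy--Crofton formula on $S^2$, which expresses the perimeter of a spherically convex region in an open hemisphere as half the spherical measure of the great circles meeting it, rendering monotonicity under inclusion immediate.
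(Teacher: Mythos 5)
Your proof is correct, and it takes a genuinely different route from the paper's. The paper argues combinatorially: it writes the spherical polygon $P=T_pK\cap S^2$ as an intersection of at least three hemispheres (one per facet through the vertex) and observes, via the spherical triangle inequality, that deleting hemispheres one at a time strictly increases the boundary length, terminating at a lune whose boundary has length exactly $2\pi$; since at least one deletion occurs, the original perimeter is strictly less than $2\pi$. You instead exploit pointedness of the tangent cone to trap $\Sigma$ inside a spherical cap $C_\varepsilon$ strictly contained in a hemisphere, whose boundary circle already has length $2\pi\sqrt{1-\varepsilon^2}<2\pi$, and then invoke monotonicity of perimeter under inclusion for spherically convex sets, justified by Cauchy--Crofton. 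Each step of yours checks out: $N$ in the interior of $N_pK$ does give $\langle u,N\rangle<0$ for all nonzero $u\in T_pK$ (else $N+\delta u\in N_pK$ would force $\delta\|u\|^2\le 0$), compactness yields the uniform $\varepsilon$, and for convex regions with nonempty interior in an open hemisphere almost every great circle meeting the region crosses the boundary exactly twice, so Crofton gives the monotonicity. The trade-off: the paper's argument is entirely elementary and tailored to the polyhedral situation (finitely many hemispheres, strictness from counting), whereas yours is cleaner on strictness, imports a standard but slightly heavier integral-geometric tool, and applies verbatim to any pointed convex cone in $\R^3$, not just polyhedral ones. Both are compatible with the Gau{\ss}--Bonnet-free spirit of the second proof of Proposition \ref{prp:3d_tetrahedron}.
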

\begin{proof} Recall that the cone angle at a vertex $p$ is the length of the boundary of the intersection $P$ of $T_pK$ with the unit sphere. This intersection $P$ can be seen as a finite intersection of at least $3$ hemispheres in the unit sphere. In this intersection it suffices to consider hemispheres that correspond to sides of the spherical polygon $P$. By the spherical triangle inequality the length of the boundary of this intersection strictly increases if we subsequently remove hemispheres from the intersection until only two hemispheres are left. At this final stage the length of the boundary is $2\pi$. Since the original number of hemispheres was at least $3$, the claim follows.
\end{proof}

Let $T$ be a triangulation of a disk. We denote the number of vertices, edges and faces of such a triangulation by $V$, $E$ and $F$, respectively. The following second ingredient can for instance be easily obtained by induction on the number of faces.
\begin{exe} \label{lem:disk_inequality} Let $T$ be a triangulation of a disk. Then $2V \leq E + 3$, or equivalently $V \leq F + 2$ by Euler's formula.
\end{exe}

Now we present the second proof of Proposition \ref{prp:3d_tetrahedron}.

\begin{proof}[Proof of Proposition \ref{prp:3d_tetrahedron} via Euler's formula] We can triangulate the boundary of $K$ in such a way that every vertex of the triangulation is also a vertex of $K$. All interior angles of the triangulation sum up to $F\pi$. On the other hand, by Lemma \ref{lem:cone_angle_bound} and our orbifold assumption the cone angle at a vertex is at most $\pi$. Hence, we have that $F \leq V$. With Euler's polyhedral formula $V-E+F=2$ we deduce that
\begin{equation}
				E \leq 2V-2.
\end{equation}
We claim that we actually have equality. To see this we can pick a vertex and remove its star from $T$ to obtain a new triangulation $T'$ of a disk, which satisfies $2V' \leq E' + 3$ by Exercise \ref{lem:disk_inequality}. Because of $V'=V-1$ and $E'\leq E-3$ we indeed have equality. Since we can start this argument with any vertex in $T$, the triangulation $T$ must be trivalent. This implies that there are only four vertices. The equality discussion moreover shows that the total angle at each vertex is $\pi$.
\end{proof}

\begin{figure}
	\centering
		\def\svgwidth{0.3\textwidth}
		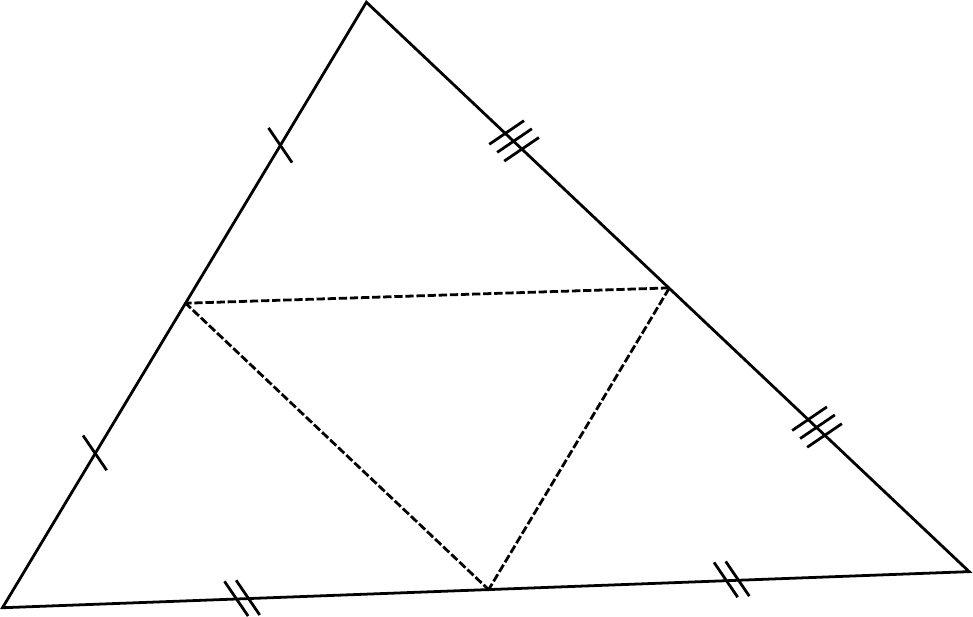
	\caption{Acute triangle subdivided into four similar triangles by segments between sidemidpoints.}
	\label{fig:_triangle}
\end{figure}

\subsection{Proof of Theorem \ref{thm_3_4_polyhedral} in higher dimensions} \label{sub:proof_C_dimh}

In order to rule out examples in higher dimensions we will apply the following intrinsic characterization of faces. In fact, we will only need the characterization of faces of codimension $3$, which we have already used, but for completeness we state the result for all codimensions.

\begin{prp} \label{prp:face_characterization} Let $K$ be a polyhedral convex body in $\R^n$, $n\geq3$. Then a point in the boundary of $K$ belongs to the interior of a face of dimension $l<n-1$ if and only if a neighborhood of $p$ in $\partial K$ (with its induced intrinsic metric) isometrically splits off an open set in $\R^{l}$ but not an open set in $\R^{l+1}$.
\end{prp}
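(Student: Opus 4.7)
The plan is to reduce the statement to a property of the transverse tangent cone at $p$ and then argue via the uniqueness of the Euclidean factor in the de~Rham decomposition of an Alexandrov cone.

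First, let $F$ denote the unique face of $K$ whose relative interior contains $p$, and set $l=\dim F$. The polyhedral structure of $K$ gives a canonical splitting $T_pK=\R^{l}\times C$, where $C\subset\R^{n-l}$ is a proper pointed convex cone with nonempty interior. A sufficiently small neighborhood of $p$ in $K$ is Euclidean-isometric to an open neighborhood of the origin in $\R^{l}\times C$, and restricting to boundaries, the corresponding neighborhood of $p$ in $\partial K$ is isometric, in the intrinsic length metric, to the product of an open set in $\R^{l}$ with an open neighborhood of the apex $o$ of $(\partial C,d_{\partial C})$. This already supplies the ``splits off $\R^{l}$'' half of the characterization, so the whole proposition reduces to showing that the intrinsic metric on $\partial C$ cannot further split off an $\R$-factor near~$o$.

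Next, I would exploit the fact that $\partial C$ is itself a Euclidean cone with apex~$o$. By scale invariance under dilations centered at $o$, together with the uniqueness of the maximal Euclidean factor of a tangent cone (a de~Rham-type decomposition for Alexandrov cones), any local $\R$-splitting near $o$ propagates to a global product decomposition $\partial C=\R\times Y$ in which $Y$ is itself a Euclidean cone with apex $y_0$, and the apex of $\partial C$ is identified with some $(t_0,y_0)$. The $\R$-factor then provides a fixed-point-free isometric action of $\R$ on $\partial C$.

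The hard part will be to convert this intrinsic splitting into an actual Euclidean line through the origin that sits inside $\partial C\subset\R^{n-l}$. The key observation is that rays from the apex agree in the intrinsic and the ambient Euclidean metric, so the $\R$-splitting produces two such rays at mutual intrinsic distance $\pi$ in the spherical link $\partial\Sigma$ of $\partial C$ at~$o$. The main obstacle I anticipate is to promote this intrinsic antipodality to genuine antipodality on $S^{n-l-1}$; the dimension assumption on $l$ will enter precisely through the strict non-flatness of the link of $\partial C$ at~$o$, which in turn follows from $C$ being proper and pointed, so that $\partial\Sigma\subsetneq S^{n-l-2}$ is not a spherical join $S^0\ast L$ unless the two join points are honestly antipodal. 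Once this is done, the resulting Euclidean line lies in $\partial C\subset C$ and therefore in the lineality space of $C$, contradicting its pointedness. The converse direction is then automatic: every boundary point lies in the interior of a unique face, whose dimension is read off from the splitting data by the forward direction.
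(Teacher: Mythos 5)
Your reduction to the transverse cone is sound: writing $T_pK=\R^l\times C$ with $C$ pointed, identifying a neighborhood of $p$ in $\partial K$ with (open set in $\R^l$)$\times$(neighborhood of the apex of $\partial C$), and using uniqueness of the maximal Euclidean factor to reduce everything to the claim that $\partial C$ does not split off a line near its apex -- all of this is fine, and it is also essentially how the paper begins (the paper slices with an $(n-l)$-plane orthogonal to the face to reduce to $l=0$). The genuine gap is exactly at the step you flag as ``the hard part'': converting the intrinsic splitting into extrinsic antipodality. Your argument correctly produces two unit vectors $x,y\in\partial\Sigma$ whose rays have angle $\pi$ \emph{in the intrinsic metric of} $\partial C$, but the passage from this to antipodality in $S^{n-l-1}$ is precisely the content of the proposition and is not supplied by the phrase ``strict non-flatness of the link.'' Indeed the implication is simply false in the excluded dimension: for a pointed planar sector $C\subset\R^2$, $\partial C$ with its intrinsic metric \emph{is} isometric to $\R$ (the two rays concatenate to a geodesic through the apex), yet there is no extrinsic line in $C$. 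So any proof of your key claim must use $n-l\ge 3$ in an essential, quantitative way; your appeal to ``$\partial\Sigma$ is not a spherical join $S^0\ast L$ unless the join points are antipodal'' merely restates the claim one dimension down (the link of $\partial\Sigma$ at one of its points is again the boundary of a pointed spherical convex body), so at best you are gesturing at an induction without identifying its base case or its inductive mechanism. (Relatedly, note that the literal statement for $l=n-2$ is problematic for the same reason -- $\partial K$ is intrinsically flat near the interior of a ridge -- and the paper only ever uses, and its proof only covers, codimension $\ge 3$.)

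For comparison, the paper closes this gap by induction on the dimension. The base case $n-l=3$ uses the elementary fact that the cone angle at a vertex of a $3$-dimensional polyhedral convex body is strictly less than $2\pi$ (spherical triangle inequality applied to the link polygon), so no intrinsic geodesic of $\partial C$ passes through the apex and hence no line exists. For the inductive step one observes that a vertex of a polyhedral cone in $\R^m$, $m\ge4$, has at least $m$ adjacent edges, so if a neighborhood of the apex split off $\R$, some edge would not lie in the $\R$-factor; interior points of that edge would then have neighborhoods splitting off $\R$ in two transverse directions, and Milka's splitting theorem upgrades this to an $\R^2$-splitting, contradicting the induction hypothesis applied to that edge. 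If you want to keep your ``join rigidity'' route, you would need to prove directly that the boundary of a convex spherical polytope with nonempty interior contained in an open hemisphere of $S^{m-1}$, $m\ge3$, has intrinsic diameter strictly less than $\pi$; that is a true but nontrivial statement, and as written your proposal assumes it rather than proves it.
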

\begin{proof} For basic properties about polyhedral convex bodies used in the following proof we refer the reader to \cite{Gr03}, in particular Section~3.1. 

We only need to show that no neighborhood of a point $p$ in a face of dimension $l<n-1$ splits off an open set in $\R^{l+1}$. Looking at the intersection of $K$ with a $(n-l)$-dimensional plane through $p$ and orthogonal to the supporting face of $p$ reduces the claim to the case $l=0$. We prove the latter by induction on $n$. 

For $n=3$ the cone angle at each vertex is strictly less than $2 \pi$ by Lemma \ref{lem:cone_angle_bound} or by the Gauß-Bonnet, cf. proof of Proposition \ref{prp:3d_tetrahedron}. In this case no shortest curve can pass through a vertex \cite[1.8.1~(A)]{Al05}. This proves the claim for $n=3$.

To prove the claim for some $n>3$, we first observe that in this case there are at least $n$ edges, i.e. $1$-dimensional faces, adjacent to a vertex $p$. For, a hyperplane $H$ that cuts off a small neighborhood of $p$ intersects $K$ in an $(n-1)$-dimensional polytope whose vertices are precisely the intersections of $H$ with the edges adjacent to $p$, and a polytope is a convex hull of its vertices \cite[Section~2.4, Theorem~2.3.4]{Gr03}. By induction assumption no point on these edges admits a neighborhood that splits off an open set in $\R^2$. Now suppose that a neighborhood of $p$ splits off an open set in $\R$. Then there exists an edge adjacent to $p$ that is not contained in this $\R$-factor. Therefore, points on this edge have neighborhoods that split off open sets in $\R$ in two different directions. Now a splitting theorem of Milka \cite{Mi67} (applied to a tangent cone which is locally isometric to a neighborhood of the base point) implies that neighborhoods of such points actually split off an open set in $\R^2$. This contradiction completes the proof of the proposition.
\end{proof}

\begin{prp} \label{prp:polyhedral_orbifold_4d} For $n\geq 4$ there does not exist a polyhedral convex body in $\R^n$ whose boundary is an orbifold.
\end{prp}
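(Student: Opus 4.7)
The plan is to derive a contradiction by extracting a $2$-dimensional spherical orbifold at an $(n-4)$-dimensional face of $K$ and invoking the classification of finite subgroups of $\SOr(3)$. Suppose $K\subset\R^n$ with $n\geq 4$ is polyhedral and $\partial K$ is a Riemannian orbifold. Every convex $n$-polytope has faces of each dimension $0,\ldots,n-1$, so I would pick an $(n-4)$-dimensional face $G$ of $K$ and a point $p$ in its relative interior. Near $p$ the intrinsic metric on $\partial K$ is a product, giving $T_p\partial K = \R^{n-4}\times\partial C^4$, where $C^4$ denotes the $4$-dimensional convex cone transverse to $G$ at $p$. Since $\partial K$ is an orbifold and orbifold tangent cones respect direct product decompositions (the Euclidean factor being collected in the de Rham splitting of $T_p\partial K \cong \R^{n-1}/\Gamma_p$), it follows that $\partial C^4$ is itself a $3$-dimensional orbifold cone, i.e.\ the Euclidean cone over a $2$-dimensional spherical orbifold~$L$.

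Next I would identify $L$ concretely with the boundary $\partial P_G$ of the link polytope $P_G = C^4\cap S^3$, equipped with its induced intrinsic spherical metric. Topologically $\partial C^4$ is locally homeomorphic to $\R^3$ near the apex, hence $L$ is a $2$-sphere and carries no reflector stratum; together with the developability theorem for constant-curvature orbifolds recalled in Section~\ref{sub:orbifold}, this yields $L \cong S^2/\Gamma$ for some finite subgroup $\Gamma \subset \SOr(3)$. The cone points of $L$ correspond to those vertices of $P_G$ whose spherical cone angle is strictly smaller than $2\pi$. A vertex $w$ of $P_G$ is the direction at $p$ of an $(n-3)$-dimensional face $F$ of $K$ containing $G$; the link of $F$ in $K$ is a $2$-dimensional polytope, i.e.\ a polygon with at least three edges, so $F$ is adjacent to at least three facets of $K$. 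The spherical triangle inequality argument behind Lemma~\ref{lem:cone_angle_bound}, applied to the transverse convex cone of $F$, then shows that the cone angle at $w$ is strictly less than $2\pi$, so every vertex of $P_G$ is a genuine cone point of~$L$.

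Finally, the classification of finite subgroups of $\SOr(3)$ (cyclic, dihedral, and the three polyhedral groups $T,O,I$) implies that every quotient $S^2/\Gamma$ has at most three singular orbits, hence $L$ has at most three cone points. On the other hand $P_G$ is a convex $3$-polytope, so it has at least four vertices, forcing $L$ to carry at least four cone points—the desired contradiction. The main technical obstacle I foresee is the rigorous passage from the orbifold assumption on $\partial K$ at $p$ to the identification $L \cong S^2/\Gamma$ with matching cone-point data, which requires a careful unwinding of how the polyhedral product splitting of $T_p\partial K$ interacts with the orbifold tangent cone; once that is in place, the combinatorial counting step is elementary.
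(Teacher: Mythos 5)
Your proof is correct and follows essentially the same route as the paper: both arguments localize at a codimension-$4$ face, observe that it is adjacent to at least four codimension-$3$ faces (equivalently, that the transverse link is a convex $3$-polytope with at least four vertices, each a genuine singular point since its cone angle is less than $2\pi$), and contradict the classification of finite subgroups of $\Or(3)$, which allows at most three singular components to meet. The only difference is presentational — you count cone points of the spherical $2$-orbifold link, while the paper counts components of the codimension-$2$ stratum meeting at a point of the transverse $3$-orbifold section and uses Proposition~\ref{prp:face_characterization} in place of your cone-angle argument to certify that codimension-$3$ faces are singular.
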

\begin{proof} Suppose such a polyhedral convex body $K$ exists in some $\R^n$, $n\geq 4$. We pick a point $p$ in the interior of a face of codimension $4$ and look at a small neighborhood of $p$ in the intersection of the boundary of $K$ with a $4$-dimensional plane through $p$ orthogonal to the supporting face of $p$. This intersection is a $3$-dimensional orbifold and by Proposition \ref{prp:face_characterization} the faces of codimension $3$ belong to the codimension $2$ stratum of this orbifold. On a polyhedral convex body each codimension $4$ face is adjacent to at least $4$ codimension $3$ faces, see proof of Proposition~\ref{prp:face_characterization}. However, on the other hand, in a $3$-orbifold at most $3$ components of the codimension $2$ stratum can meet at a point. The latter follows from the classification of finite subgroups of $\Or(3)$. This contradiction completes the proof of the proposition and of Theorem \ref{thm_3_4_polyhedral}.
\end{proof}

\end{document}